\documentclass[11pt]{amsart}
\usepackage[all,tips]{xy}
\usepackage{latexsym, amsfonts, amsmath, amssymb, mathrsfs, graphicx, xcolor, hyperref, float, xurl,tikz,tikz-cd,caption}
\usepackage{makecell}
\usepackage{mathtools}
\allowdisplaybreaks

\newtheorem{theorem}{Theorem}[section]
\newtheorem{lemma}[theorem]{Lemma}
\newtheorem{example}[theorem]{Example}
\newtheorem{remark}[theorem]{Remark}
\newtheorem{lemma-definition}[theorem]{Lemma-Definition}
\newtheorem{corollary}[theorem]{Corollary}
\newtheorem{proposition}[theorem]{Proposition}
\newtheorem{definition}[theorem]{Definition}

\newtheorem{thmx}{\text{Theorem}}

\DeclareMathOperator{\End}{End}

\DeclareMathOperator{\trd}{trd}
\DeclareMathOperator{\nrd}{nrd}
\DeclareMathOperator{\Aut}{Aut}
\DeclareMathOperator{\Frob}{Frob}

\DeclareMathOperator{\Spec}{Spec}

\newcommand{\bC}{\mathbb{C}}

\newcommand{\bF}{\mathbb{F}}

\newcommand{\bQ}{\mathbb{Q}}

\newcommand{\bZ}{\mathbb{Z}}

\newcommand{\sC}{\mathscr{C}}

\newcommand{\cD}{\mathcal{D}}

\newcommand{\cO}{\mathcal{O}}

\usepackage[margin=1in]{geometry}
\begin{document}
\title{Atkin--Lehner Quotients of Modular Curves at Primes of Bad Reduction}
\author{Lea Beneish}
\address{GAB 417A; 225 Avenue E, Denton, TX 76201
}
\email{lea.beneish@unt.edu}
\author{Boya Wen}
\address{221 Richmond Way
Jepson Hall, MATH
Richmond, VA 23173
}
\email{boya.wen@richmond.edu}
\begin{abstract}
   Let $N$ be a square-free, positive integer and let $p\ge 5$ be a prime dividing $N$. In this work, we construct semistable models of Atkin--Lehner quotients of $X_0(N)$ over the Witt ring $W(\overline{\bF}_p)$ by taking the corresponding quotients of the Deligne--Rapoport model of $X_0(N)$. We 
   give an explicit description of the special fiber of the models and obtain genus formulae for the Atkin--Lehner quotients of $X_0(N)$.
\end{abstract}
\maketitle
\tableofcontents

\section{Introduction}
Let $N$ be a positive integer. The modular curve $X_0(N)$ is the compactification of the moduli space for degree-$N$ cyclic isogenies between elliptic curves (or equivalently, elliptic curves with marked cyclic subgroups of order $N$). Let $D$ be a positive divisor of $N$ such that $(D,N/D)=1$. The Atkin--Lehner involution $w_D$ acts on $X_0(N)$ as follows from the moduli perspective: Given a degree-$N$ cyclic isogeny $\phi:E\to E'$ between (generalized) elliptic curves, $w_D([\phi])$ is the isomorphism class of the composition isogeny $E/C_D\to E/\ker(\phi)\cong E'\to E'/C_D'$, where $C_D$ and $C_D'$ respectively denote the unique order-$D$ subgroups of $\ker(\phi)$ and $\ker(\hat{\phi})$, $E/C_D\to E/\ker(\phi)$ and $E'\to E'/C_D'$ are natural projections, and $E/\ker(\phi)\cong E'$ is induced by $\phi:E\to E'$. In particular, $w_N([\phi])=[\hat{\phi}]$, the isomorphism class of the dual isogeny $\hat{\phi}:E'\to E$.  (See for example \cite[I.1, I.2]{Gross--Zagier}.) Equivalently, given a (generalized) elliptic curve $E$ with a marked cyclic subgroup $C_N$, $w_D$ sends the point $[(E, C_N)]$ to $[(E/C_D, (C_N+E[D])/C_D)]$, where $C_D$ is the unique order-$D$ subgroup of $C_N$. (See for example \cite[p.1]{Xue}.)

Composition of Atkin--Lehner operators satisfy $w_{D_1}w_{D_2}=w_{D_3}$ where $D_3=D_1D_2/\gcd(D_1,D_2)^2$, (which we shall denote $D_1 * D_2$ in Definition~\ref{def:star}). The group $W$ generated by all the Atkin--Lehner operators $w_D$, where $D$ is a positive divisor of $N$ coprime to $N/D$, is an abelian group isomorphic to $(\bZ/2\bZ)^r$, where $r$ is the number of distinct prime factors of $N$. In this work, we study the quotients of $X_0(N)$ by any subgroup of $W$ (including $W$ itself), which we refer to as \emph{Atkin--Lehner quotients} of $X_0(N)$, under the assumption that $N$ is square-free. 

Atkin--Lehner quotients of $X_0(N)$, particularly $X_0^+(N):=X_0(N)/\langle w_N\rangle$ and $X_0^*(N):=X_0(N)/W$, are also of significant moduli theoretic interest. For example, every non-CM $\mathbb{Q}$-curve (respectively $K$-curve) is isogenous to one whose isomorphism class is parametrized by a $\bQ$-rational (respectively $K$-rational) point of $X_0^*(N)$ \cite{Elkies,Ellenberg}.

Let $k$ be a complete local field with ring of integers $\mathcal{O}_k$ and residue field $\kappa$. Let $C$ be smooth projective and geometrically connected curve defined over $k$. We say $C$ has a model $\sC$ over $\cO_k$ if there is a proper flat scheme morphism $\sC \to \Spec\mathcal{O}_k$ together with a $k$-scheme isomorphism $\sC_{\eta}\cong C$. When a curve does not have a model with good reduction over $\mathcal{O}_k$ one asks for a semistable or stable model (see \cite[Chapter 10, Definition 3.1, 3.2, and 3.14]{Liu_ArithmeticCurve}). Note that the literature sometimes differs in its convention on whether a semistable model is required to be regular. In this paper, we say $\sC$ is a semistable model if the geometric special fiber $\sC_{s,\overline{\kappa}}$ (extending the scalars of the special fiber $\sC_s$ to $\overline{\kappa}$) is reduced and any singularities on $\sC_{s,\overline{\kappa}}$ are ordinary double points (nodes). We do not require the ordinary double points to have thickness $1$, or in other words, we do not require a semistable model $\sC$ to be regular. (See also Lemma-Definition~\ref{lemma-definition} and Remark~\ref{rmk:may-not-regular}.)

Stable and semistable models for various levels of modular curves have been constructed by, for example, \cite{Deligne--Rapoport} ($X_0(Mp)$, semistable, further developed in \cite{Katz--Mazur}), \cite{Edixhoven} ($X_0(Mp^2)$, stable), \cite{Bouw--Wewers} ($X_0(p)$, stable), \cite{Coleman--McMurdy} ($X_0(p^3)$, stable), \cite{Weinstein} ($X(Mp^m)$, semistable), and \cite{Tsushima} ($X_0(p^4)$, stable), where $p$ denotes the characteristic of the residue field of the base discrete valuation ring for the respective models, and $M$ is coprime to $p$.

Given a semistable model $\sC$, one can define the dual graph of the geometric special fiber $\sC_{s,\bar{\kappa}}$ via incidence relations i.e., each irreducible component of $\sC_{s,\bar{\kappa}}$  corresponds to a vertex and each intersection between two irreducible components corresponds to an edge connecting the corresponding vertices.

Our work concerns explicit special fiber (or equivalently, dual graph) descriptions of semistable models of Atkin--Lehner quotients of modular curves. Let $N$ be a square-free positive integer divisible by a prime $p\ge 5$. Take the Deligne--Rapoport model of $X_0(N)$ over the Witt ring $W(\overline{\mathbb{F}}_p)$, whose special fiber $X_0(N)_{\overline{\mathbb{F}}_p}$ consists of two irreducible components of the smooth curve $X_0(N/p)_{\overline{\mathbb{F}}_p}$ intersecting transversally at supersingular points \cite[Section VI Th\'eor\`eme 6.9]{Deligne--Rapoport}. We track the changes in singularity as we quotient by the Atkin--Lehner operators, obtaining semistable models of Atkin--Lehner quotients. We give explicit description of the dual graph of the special fiber and obtain explicit genus formulae for the quotients. While the semistable quotient model we obtain by taking the quotient of the Deligne--Rapoport model need not be regular, in \cite{Xue}, Xue constructs minimal resolution for the one-layer quotient  $X_0(N)/\langle w_M\rangle$.

We have the following theorem for square-free $N$:
\begin{thmx} [Theorem~\ref{thm:generator-free} in the main text]
    Let $N=p_1\dots p_r$ be a square-free positive integer, let $p=p_r$, and let $H$ be a subgroup of $W_N$. Let $\cD_H:=\{d : w_d\in H\}$.
    \begin{enumerate}
        \item If $p\nmid d$ for any $d\in \cD_H$, then $X_0(N)_{\overline{\bF}_p}/H$ obtained from taking the quotient of the Deligne--Rapoport model of $X_0(N)_{\overline{\bF}_p}$ is two irreducible components $X_0(N/p)_{\overline{\bF}_p}/H$ intersecting at 
        $$\frac{1}{|H|}\sum_{d\in \cD_H} F(d)$$
        ordinary double points. Consequently, $$g(X_0(N)/H)=2g(X_0(N/p)/H)+\left(\frac{1}{|H|}\sum_{d\in \cD_H} F(d)\right)-1.$$

        \item Otherwise, there exists some $w_{d_0}\in H$ such that $p\mid d_0$ and $K:=\{w_d\in H : p\nmid d\}$ is an index-$2$ subgroup of $H$. In this case, $X_0(N)_{\overline{\bF}_p}/H=(X_0(N)_{\overline{\bF}_p}/K)/\langle w_{d_0} \rangle$ obtained from taking the quotient of the Deligne--Rapoport model of $X_0(N)_{\overline{\bF}_p}$ is one irreducible component $X_0(N/p)_{\overline{\bF}_p}/K$ with
        $$\frac{1}{|H|}\sum_{d\in \cD_K} \left(F(d)-F(d*d_0)\right)$$
        self-crossings that are ordinary double points, and consequently, $$g(X_0(N)/H)=g(X_0(N/p)/K)+\frac{1}{|H|}\sum_{d\in \cD_K} \left(F(d)-F(d*d_0)\right).$$
    \end{enumerate}  
\end{thmx}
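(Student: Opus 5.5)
The plan is to work directly with the Deligne--Rapoport model $\mathscr{X}$ of $X_0(N)$ over $W(\overline{\bF}_p)$. Its special fiber is two copies of $X_0(N/p)_{\overline{\bF}_p}$ meeting transversally at the supersingular points. I take $F(d)$ to be, as defined earlier in the paper, the number of supersingular points of $X_0(N/p)_{\overline{\bF}_p}$ fixed by the relevant Atkin--Lehner action; in particular $F(1)$ is the total number of supersingular points.

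\textbf{Step 1: the quotient commutes with reduction.} Since $|H|$ is a power of $2$ and $p\ge 5$, the order $|H|$ is invertible on $W(\overline{\bF}_p)$. Taking invariants is therefore exact, so $(\mathscr{X}/H)_s=\mathscr{X}_s/H$. The quotient $\mathscr{X}/H$ is proper and flat with generic fiber $X_0(N)/H$. Hence the arithmetic genus of the special fiber equals $g(X_0(N)/H)$, and both genus formulae will follow from the standard formula $p_a=\sum_i g(\tilde C_i)-\#\{\text{components}\}+1+\delta$ once the special fiber is identified as a reduced nodal curve with $\delta$ nodes.

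\textbf{Step 2: the action on components.} For $p\nmid d$, the operator $w_d$ preserves each component and acts on it as $w_d$ on $X_0(N/p)$. For $p\mid d$, the operator $w_d=w_{d/p}w_p$ interchanges the two components, because $w_p$ exchanges the ``étale-kernel'' and ``Frobenius-kernel'' branches. The map $d\mapsto[p\mid d]$ is a homomorphism $H\to\bZ/2\bZ$, so its kernel $K$ is either all of $H$ or has index $2$; this is the dichotomy in the statement.

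\textbf{Step 3: the local analysis at a node (main obstacle).} Let $x$ be a supersingular point with stabilizer $H_x$. I need two facts.
\begin{enumerate}
\item If $H_x$ preserves both branches, then the image of $x$ is again an ordinary double point, possibly of larger thickness. Locally the complete local ring is $W[[u,v]]/(uv-p^{e})$, and $H_x$ acts diagonally by characters. The invariants are generated by monomials $u^a$, $v^b$, together with mixed monomials that reduce to the relation $UV=p^{e'}$.
\item If some element of $H_x$ swaps the branches, say by $u\leftrightarrow \epsilon v$, then on the special fiber the invariant ring of $k[[u,v]]/(uv)$ is $k[[u+\epsilon v]]$, which is smooth. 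So the image is a smooth point of the single quotient component, and the node disappears.
\end{enumerate}
I would first try to make the linearization explicit using that $H_x$ is an elementary abelian $2$-group acting tamely, which makes the action diagonalizable.

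\textbf{Step 4: counting nodes by Burnside.}

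In case (1), no element swaps branches. The nodes of the quotient are then the $H$-orbits on supersingular points, and Burnside's lemma gives $\frac{1}{|H|}\sum_{d\in\cD_H}F(d)$ of them. The special fiber has two components $X_0(N/p)_{\overline{\bF}_p}/H$, so
$$g=2g(X_0(N/p)/H)-2+1+\delta,$$
which is the claimed formula.

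In case (2), first apply case (1) to $K$. Then $w_{d_0}$ swaps the two components and identifies them into one copy of $X_0(N/p)/K$. It acts on the set of $K$-orbits of supersingular points.
\begin{itemize}
\item A $K$-orbit fixed by $w_{d_0}$ means that some $w_{d*d_0}$ with $d\in\cD_K$ fixes a point. That element swaps the branches there, so by Step 3 the node becomes a smooth point.
\item Non-fixed $K$-orbits pair up, and each pair becomes one self-crossing.
\end{itemize}
The number of fixed $K$-orbits is $\frac{1}{|K|}\#\{(k,x): kw_{d_0}x=x\}=\frac{1}{|K|}\sum_{d\in\cD_K}F(d*d_0)$. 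This is the twisted Burnside count, valid because $H$ is abelian. Hence the number of self-crossings is
$$\tfrac12\Bigl(\tfrac{1}{|K|}\textstyle\sum_{d\in\cD_K}F(d)-\tfrac{1}{|K|}\sum_{d\in\cD_K}F(d*d_0)\Bigr),$$
which equals the stated expression since $|H|=2|K|$. The genus formula then follows with one component, $p_a=g(X_0(N/p)/K)+\delta$.
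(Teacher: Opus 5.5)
Your proposal is correct and is essentially the paper's argument: the local node analysis is the paper's Proposition~\ref{prop:quot_sing} (via Liu), and your coset-twisted Burnside count of $w_{d_0}$-fixed $K$-orbits is exactly Proposition~\ref{prop:fixed-classes} applied with $G=K$ and $d_{k+1}=d_0$. The only difference is organizational: you count the $H$-orbits in case (1) with a single Burnside computation over all of $H$, whereas the paper adds one generator at a time via Proposition~\ref{prop:sqfr} and iterates $C_{k+1}=\tfrac12(C_k+n_k)$ to reach the same closed formula.
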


Here and throughout, $d*d_0:=dd_0/\gcd(d,d_0)^2$, $F(D)$ is the shorthand notation for $F(D,N/D,p)$, the number of $w_d$-fixed supersingular points on $X_0(N)_{\overline{\bF}_p}$. We give explicit formulae for the number $F(D,m,p)$ of $w_D$-fixed supersingular points on $X_0(Dm)_{\overline{\bF}_p}$ in Proposition~\ref{prop:fixed-point-formula}.

We note that there are well-known genus formulae for quotients of Atkin—Lehner quotients from the characteristic~$0$ consideration, which we discuss in Remark~\ref{rmk:comparison-RH-and-trace}. Reduction behavior of $X_0^*(N)$ modulo $p$ when $p\mid N$ is also studied in \cite{Padurariu--Park--Voight}, and we compare our result with theirs in Remark~\ref{rmk:comparison-PPV}.

Because the case is considerably simpler and in some sense more intuitive, for clarity and readability we include a discussion of the corresponding results specialized to the $X_0(pq)$ quotients. While we also describe $X_0(pq)$, $X_0(pq)/w_p$, and $X_0(pq)/w_q$ in text (see Propositions \ref{prop:X0pq}, \ref{prop:X0pqmodwp}, and \ref{prop:X0pqmodwq}),  we state the description of $X_0(pq)^*$ below.

\begin{thmx}[Theorem~\ref{thm:X0pqmodwpwq} in the main text]
Let $p,q$ be distinct primes $\geq 5$. The model of $X_0^*(pq)$ over $\overline{\bF}_p$ obtained from taking the quotient of the Deligne--Rapoport model of $X_0(pq)_{\overline{\bF}_p}$ by $\langle w_p,w_q\rangle$ is given by $X_0^+(q)_{\overline{\bF}_p}$ with
\begin{align*}
   &\frac12\left((g(X_0(pq)/w_p)-g(X_0(q)))-\frac12F(pq,1,p)+\frac12F(q,p,p)\right)\\
   =& \frac12\left((g(X_0(pq)/w_q)+1-2g(X_0^+(q)))-\frac12F(pq,1,p)-\frac12F(p,q,p)\right)
\end{align*}
self-crossings that are ordinary double points, where $F(q,p,p)$ is the number of fixed supersingular points of the $w_q$ action on $X_0(pq)_{\overline{\mathbb{F}}_p}$ and is given by \[F(q,p,p)=\frac{v(q)}{2}\left(1-\left(\frac{-q}{p}\right)\right),\]
and $F(pq,1,p)$ is the number of fixed supersingular points of the $w_{pq}$ action on $X_0(pq)_{\overline{\mathbb{F}}_p}$ and is given by \[ F(pq,1,p)=\frac{v(pq)}{2},\] if $pq\equiv 1 \pmod{4}$ or $p\equiv 1\pmod 4$ 
and $F(p,q,p)$ is the number of $w_p$-fixed supersingular points on $X_0(pq)_{\overline{\bF}_p}$ and is given by $$F(p,q,p)=\frac{v(p)}{2}\left(1+\left(\frac{-p}{q}\right)\right)$$ where $\left(\frac{-p}{q}\right)$ denotes the Legendre symbol and 
$$v(x)=\begin{cases}
    h(-4x) &\text{ for } x\equiv 1,2 \pmod{4},\\
    h(-x)+h(-4x)&\text{ for } x\equiv 3 \pmod{4}.
\end{cases}$$
Here and throughout,   $h(d)$ denotes the class number of the imaginary quadratic order of discriminant $d$.

Consequently, the genus of $X_0^*(pq)$ is equal to 
\begin{align*}
    &\frac12\left((g(X_0(pq)/w_p)-g(X_0(q)))-\frac12F(pq,1,p)+\frac12F(q,p,p)\right)+g(X_0^+(q))\\
    =&\frac12\left((g(X_0(pq)/w_q)+1-2g(X_0^+(q)))-\frac12F(pq,1,p)-\frac12F(p,q,p)\right)+g(X_0^+(q)).
\end{align*}
\end{thmx}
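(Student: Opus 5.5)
We prove this by specializing to $N=pq$, $H=\langle w_p,w_q\rangle$, and computing the quotient in two steps. Both steps use the Deligne--Rapoport description of $X_0(pq)_{\overline{\bF}_p}$: two copies of $X_0(q)_{\overline{\bF}_p}$ meeting transversally at the supersingular points.

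\emph{Step 1: quotient by $w_q$.} Since $p\nmid q$, $w_q$ preserves each component and acts on it as $w_q$ on $X_0(q)$. The quotient is therefore two copies of $X_0^+(q)_{\overline{\bF}_p}$. At a node, we use the local description from Lemma-Definition~\ref{lemma-definition}: the formal neighborhood is $xy=\pi^k$, and $w_q$ preserves the branches. A fixed node maps to a node (of greater thickness), and a free orbit of nodes maps to a single node. The number of nodes is thus $\frac12(S+F(q,p,p))$, where $S$ is the number of supersingular points. This is part (1) of Theorem~\ref{thm:generator-free}, and it gives
\[
g(X_0(pq)/w_q)=2g(X_0^+(q))+\tfrac12(S+F(q,p,p))-1 .
\]

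\emph{Step 2: quotient by $w_p$ (equivalently $w_{pq}$).} We use that $w_p$ interchanges the two components, which is the standard Deligne--Rapoport fact that $w_p$ swaps the Frobenius and Verschiebung components. Consequently $X_0^*(pq)_{\overline{\bF}_p}$ has one component, isomorphic to $X_0^+(q)_{\overline{\bF}_p}$. The remaining work is to count its self-crossings.

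\emph{Counting self-crossings.} Work on $Y=X_0(pq)_{\overline{\bF}_p}/\langle w_q\rangle$ with the involution $\iota$ induced by $w_p$ (equal to that induced by $w_{pq}$). Nodes of $Y$ correspond to $\langle w_q\rangle$-orbits of supersingular points. Since $\iota$ swaps the branches at a node it fixes, such a node becomes a smooth point of the quotient; we check this locally with $\iota(x,y)=(y,x)$ on $xy=\pi^k$, whose invariant ring is regular in the special fiber. Nodes exchanged in pairs give one ordinary double point each, namely a self-crossing of the single component.

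The number of $\iota$-fixed nodes of $Y$ is the number of $H$-orbits of supersingular points fixed by $w_p$ or by $w_{pq}$. By Burnside-type counting over the group $H$, it equals $\frac12(F(p,q,p)+F(pq,1,p))$. Here we use that no supersingular point is fixed by all of $H$, since that would force fixing by $w_q$ as well. The self-crossing count is then
\[
\tfrac12\Big(\tfrac12(S+F(q,p,p))-\tfrac12(F(p,q,p)+F(pq,1,p))\Big),
\]
which matches part (2) of Theorem~\ref{thm:generator-free} with $K=\langle w_q\rangle$ and $d_0=p$, because $F(1)=S$ and $F(q*p)=F(pq)$.

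\emph{Genus formula and the two expressions.} The genus is $g(X_0^+(q))$ plus the number of self-crossings. Substituting $S=2(g(X_0(pq)/w_q)+1-2g(X_0^+(q)))-F(q,p,p)$ from Step 1 gives the second displayed expression. For the first, we run Step 2 first: $w_p$ on the Deligne--Rapoport model gives one copy of $X_0(q)$ with self-crossings, and
\[
g(X_0(pq)/w_p)=g(X_0(q))+\tfrac12(S-F(p,q,p)).
\]
Substituting this for $S$ gives the first form.

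\emph{Fixed-point values and the main obstacle.} The explicit values of $F$ come from Proposition~\ref{prop:fixed-point-formula}, specialized to $(D,m)=(q,p)$, $(p,q)$, and $(pq,1)$. The Legendre-symbol factors arise from whether $\sqrt{-q}$, respectively $\sqrt{-p}$, embeds in the relevant Eichler order of the quaternion algebra ramified at $p,\infty$. The main obstacle is the local analysis at nodes fixed by the swapping involution. There we must confirm both that they become smooth and that no node is fixed by two different elements of $H$ in a way that spoils the orbit count; the latter is where $p,q\ge5$ and the case condition on $F(pq,1,p)$ enter.
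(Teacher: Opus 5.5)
Your argument is correct and is essentially the paper's Route (2). You quotient by $w_q$ first, getting two copies of $X_0^+(q)_{\overline{\bF}_p}$ meeting in $\frac12(S+F(q,p,p))$ nodes. Then $w_p$ kills its fixed nodes as branch-swapping and pairs up the rest. Your Burnside count over $H$ is Proposition~\ref{prop:fixed-classes} with $G=\langle w_q\rangle$ and $d_{k+1}=p$. The one real difference is the first displayed expression. The paper derives it from an independent Route (1) computation. You instead substitute $S=2\bigl(g(X_0(pq)/w_p)-g(X_0(q))\bigr)+F(p,q,p)$, which needs only the one-step fact that $X_0(pq)_{\overline{\bF}_p}/w_p$ is $X_0(q)_{\overline{\bF}_p}$ with $\frac12(S-F(p,q,p))$ self-crossings. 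That is shorter and makes the equality of the two expressions automatic. The paper's two independent routes instead yield the identity $g(X_0(pq)/w_p)=\frac12\bigl(g(X_0(pq))-F(p,q,p)+1\bigr)$ of its Corollary as an output, whereas you use it, in equivalent form, as an input.

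One sentence should go: ``no supersingular point is fixed by all of $H$'' is false. The stated reason (it ``would force fixing by $w_q$'') is no contradiction. Over $\overline{\bF}_p$, $\End(E)$ can contain trace-zero $\alpha,\beta$ with $\nrd(\alpha)=q$, $\nrd(\beta)=p$ and $\alpha\beta=-\beta\alpha$. Then $\beta(\ker\alpha)=\ker\alpha$, so $(E,\ker\alpha+C_p)$ is fixed by $w_p$, $w_q$ and $w_{pq}$. This already happens in the paper's example $p=13$, $q=7$. In the maximal order of $B_{13,\infty}$, Frobenius is the unique-up-to-sign element of square $-13$, and every element of square $-7$ anticommutes with it. So both $w_7$-fixed supersingular points of $X_0(91)_{\overline{\bF}_{13}}$ are also $w_{13}$-fixed.

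Your count survives because Burnside never needed the claim. Let $n$ be the number of $H$-fixed supersingular points and $X$ the set of points fixed by $w_p$ or $w_{pq}$. Then $|X|=F(p,q,p)+F(pq,1,p)-n$ and $|X^{w_q}|=n$, so
\[
\tfrac14\bigl(|X|+F(p,q,p)+n+F(pq,1,p)\bigr)=\tfrac12\bigl(F(p,q,p)+F(pq,1,p)\bigr).
\]
This is the same cancellation of $n$ the paper makes explicit in Route (2).

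Your closing paragraph also misplaces the hypotheses. The node and orbit count is unconditional. The assumptions $p,q\ge 5$ and ($pq\equiv1$ or $p\equiv 1\pmod 4$) enter only through Proposition~\ref{prop:fixed-point-formula}, for the closed forms of the $F$'s.
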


For the case where $N$ is simply one prime $p$, we also include this discussion and collect and include the corresponding equivalent statements from elsewhere in the literature cited appropriately.
\begin{thmx}[Theorem~\ref{thm:X0pmodwp} in the main text]
 The following are equal.
\begin{enumerate}
    \item[(a)] The genus of $X_0^+(p)/\mathbb{Q}$. 
    
    \item[(b)] The number of singular points on $X_0^+(p)_{\overline{\bF}_p}$ (model obtained by quotienting the Deligne--Rapoport model by $w_p$).
    
    \item[(c)] Half of the number of supersingular points on $X_0(p)$ over $\overline{\mathbb{F}}_p$ \textbf{not} fixed by $w_p$. 

    \item[(d)] Half of the number of supersingular $j$ invariants (all in $\mathbb{F}_{p^2}$) that are not in $\mathbb{F}_p$.
\end{enumerate}

When $p\ge 5$, it is also known that the total number $S_{p^2}$ of supersingular $j$-invariants (in $\mathbb{F}_{p^2}$) is given by 

\begin{equation*}
  S_{p^2}=  \left\lfloor\frac{p}{12}\right\rfloor+\begin{cases}
        0\quad\text{ if } p\equiv 1 \mod {12}\\
        1\quad\text{ if } p\equiv 5,7 \mod {12}\\
        2\quad\text{ if } p\equiv 11 \mod {12}
    \end{cases}
\end{equation*}
(\cite[Theorem V.4.1(c)]{Silverman}) and that the total number $S_p$ of supersingular $j$-invariants in $\mathbb{F}_p$ is given by
\begin{equation*}
   S_p= \begin{cases}
        \frac{1}{2}h(-4p)\quad\text{ if } p\equiv 1 \mod 4\\
        h(-p)\quad\text{ if } p\equiv 7 \mod 8\\
        2h(-p)\quad\text{ if } p\equiv 3 \mod 8,
    \end{cases}
\end{equation*}
 (\cite[Theorem 14.18]{Cox89}, as cited in \cite[p.2]{Delfs--Galbraith}). In particular, Item~(d) above is explicit when $p\ge 5$.
\end{thmx}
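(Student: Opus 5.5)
We prove the equalities (a)=(b)=(c)=(d) by analyzing the quotient of the Deligne--Rapoport model of $X_0(p)$ over $W(\overline{\bF}_p)$ by $w_p$, using the description of $X_0(p)_{\overline{\bF}_p}$ recalled in the introduction. That special fiber consists of two copies of $X_0(1)_{\overline{\bF}_p}\cong\mathbb{P}^1$. They cross transversally at the supersingular points, which are in bijection with the supersingular $j$-invariants; there are $S_{p^2}$ of them.

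On the special fiber, $w_p$ swaps the two components, since it interchanges the Frobenius and Verschiebung components. At a supersingular point, corresponding to a supersingular curve $E$ with kernel of Frobenius as the subgroup, $w_p$ sends $j(E)$ to $j(E^{(p)})=j(E)^p$. Hence a crossing point is fixed by $w_p$ exactly when $j(E)\in\bF_p$. This gives (c)=(d) directly: the points not fixed by $w_p$ correspond bijectively to the supersingular $j$-invariants outside $\bF_p$, and these pair up into Frobenius orbits of size $2$.

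For (b)=(c), the quotient special fiber is a single component $X_0(1)_{\overline{\bF}_p}\cong\mathbb{P}^1$. Each orbit $\{x,w_p(x)\}$ of non-fixed crossing points maps to a single point, where the two branches through $x$ become the two branches of a self-crossing node. A crossing point fixed by $w_p$ has its two branches swapped, so its image is a smooth point of the quotient.

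\textbf{Main obstacle.} The delicate step is verifying these local claims at crossing points. Locally the model is $W[[x,y]]/(xy-p^k)$, and the involution acts either freely on the pair of points or by $x\leftrightarrow \pm y$ (up to units). Computing invariants then shows that a free orbit yields a node, while a fixed point yields a regular ring whose special fiber is smooth. This is presumably where the paper's general local lemma (Lemma-Definition~\ref{lemma-definition}) is used. It requires the action of $w_p$ near the fixed point to be tame, which holds since $p\ge5>2$. Granting this, the quotient model is semistable with the number of singular points given in (c).

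For (a)=(b), we use that the quotient of a flat proper model by a finite group is again flat and proper with generic fiber $X_0^+(p)$. The arithmetic genus is constant in flat families, so $g(X_0^+(p))$ equals the arithmetic genus of the special fiber. That fiber is a rational curve with $n$ nodes, so its arithmetic genus is $0+n$, with $n$ the number in (b).

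Finally, the formulas for $S_{p^2}$ and $S_p$ are quoted from \cite{Silverman} and \cite{Cox89}, which makes (d) explicit.
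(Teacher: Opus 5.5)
Your proposal is correct and follows essentially the same route as the paper. You get (a)=(b) from the dual-graph genus formula and invariance of arithmetic genus under reduction, and (c)=(d) from $w_p[(E,C_p)]=[(E^{(p)},C_p)]$, so that a supersingular point is fixed exactly when $j(E)\in\bF_p$. For (b)=(c), the paper cites Proposition~\ref{prop:quot_sing} (from Liu's Proposition 10.3.48) for the behavior of nodes under the quotient rather than computing invariants by hand; your tameness caveat is harmless here, since $w_p$ has order $2$ and $p$ is odd.
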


While the authors believe these genus formulae for Atkin--Lehner quotients of modular curves and the dual graph descriptions to be of independent interest, we note several instances in the literature where the dual graph may be an input for other applications. In \cite{Zhang93}, Zhang develops harmonic analysis on the metrized dual graph of the special fiber of a semistable model to construct an admissible height pairing. In \cite{Banerjee--Chaudhuri}, Banerjee and Chaudhuri apply Zhang's construction to semistable models of $X_0(p^2)$ to prove an effective version of Bogomolov conjecture for these modular curves.

Notably, there is a large program of provably computing all the rational points on all Atkin--Lehner quotients of $X_0(N)$. One of the techniques used to do this is Quadratic Chabauty, a depth 2 version of the Chabauty--Kim method \cite{Kim_2005,Kim_2009} with  implementations given in: \cite{BD1,BD2,BD3,BDMTV1,BDMTV2}. This technique requires computation of local $p$-adic height pairings at primes of bad reduction (see, for example, \cite[Theorem 3.2]{BDMTV2} which cites \cite{Betts--Dogra}, for a computation involving reduction graphs). As noted in \cite[Section 3.1, p.1116]{BDMTV2}, this computation also uses the harmonic analysis on metrized reduction graphs developed in \cite{Zhang93}. In some cases, such as \cite[Lemma 5.2]{BDMTV2}, it happens that the local heights at bad reduction places are trivial, but this is not the case for all Atkin--Lehner quotients of $X_0(N)$ for all square-free level $N$. In this way, our results can be viewed as providing information necessary for the computation of local heights.

This paper is organized as follows. In Section~\ref{sec:singularity}, we describe how singularities in a semistable model of a curve change when taking a finite quotient. In Section~\ref{sec:fixedpoints}, we provide counting formulae for $w_D$-fixed supersingular points on $X_0(N)_{\overline{\bF}_p}$. In Section~\ref{sec:X0+p}, we give a description of the model for $X_0^+(p)_{\overline{\bF}_p}$ obtained from quotienting the Deligne--Rapoport model of $X_0(p)_{\overline{\bF}_p}$ by $w_p$. In Section~\ref{sec:X0*pq}, we analyze quotients of $X_0(pq)_{\overline{\bF}_p}$ and obtain explicit descriptions for the model of $X_0^*(pq)_{\overline{\bF}_p}$ as well as the intermediate quotients. In Section~\ref{sec:square-free}, we describe any Atkin--Lehner quotient of $X_0(N)$ for square-free level $N$.

\section*{Acknowledgments}
The authors appreciate valuable conversations with Scott Carnahan, Nathan Chen, and John Voight during this project. We are also grateful to Abbey Bourdon, Wanlin Li, Jackson Morrow, Yunqing Tang, John Voight,  and David Zureick-Brown for helpful feedback on an earlier draft.

Nearing the completion of this project, the authors learned that Nikola Adžaga, Maarten Derickx, and Timo Keller were also working on a similar project at the same time, titled \emph{Semi-stable models, local heights and quadratic Chabauty for $X_0(N)^*$}, and we thank Abbey Bourdon for letting both groups know about the potential overlap. 

LB is grateful for the support of the U.S. National Science Foundation (DMS-2418835) and the Simons
Foundation (MPS-TSM–00007992). Part of this work was done while BW was visiting the Simons Laufer Mathematical Sciences Institute (SLMath) in Spring 2023, which was supported by the National Science Foundation (Grant No. DMS-1928930).

AI disclosure: The authors used Claude and ChatGPT for literature search and brainstorming. All text and proofs were written by the authors. 

\section{Singularities on quotients of curves}
\label{sec:singularity}

We construct models of Atkin--Lehner quotients of modular curves by considering how Atkin--Lehner operators act on models of the modular curve. In this process, we need to study how singularities behave under quotients.

\begin{lemma-definition}
\label{lemma-definition}
    Let $X$ be a nodal plane curve over the spectrum of a discrete valuation ring $\mathcal{O}_k$, and let $x$ be a closed point on the special fiber $X_s$ with nodal singularity. Let $G$ be a finite group acting on $X$ fixing $x$. Let $B=\mathcal{O}_{X,x}$ and let $\hat{B}$ be the completion of $B$. Let $t$ be a uniformizing parameter for $\mathcal{O}_k$ and for $u,v \in \hat{B}$, let $\tilde{u}, \tilde{v}$ be the images of $u,v$ mod $t$, respectively. Then $\hat{B}= \mathcal{O}_k[[u,v]]$ for some $u,v\in \hat{B}$ such that $uv-t^m=0$, $m\ge 1$ and there are precisely two minimal prime ideals of $\hat{B}/(t)$, which are $(\tilde{u})$ and $(\tilde{v})$, corresponding geometrically to the two local branches passing through $x$. The action of $G$ on $X$ fixing $x$ induces an action of $G$ on $\{(\tilde{u}),(\tilde{v})\}$. Let $G_0$ be the subgroup of $G$ containing the elements that fixes $(\tilde{u}),(\tilde{v})$ respectively. Then either $G_0=G$ or $G_0$ is an index-$2$ subgroup of $G$. If $G_0=G$, we call the action of $G$ on $X$ \emph{branch-preserving}. Otherwise, $G_0$ is an index-$2$ subgroup of $G$, in which case we call the action of $G$ on $X$ \emph{branch-swapping}.

    If $w$ fixes $(\tilde{u}),(\tilde{v})$ respectively, we call the action of $G$ on $X$ \emph{branch-preserving}. Otherwise, $w$ interchanges $(\tilde{u})$ and $(\tilde{v})$, in which case we call the action of $G$ on $X$ \emph{branch-swapping}.
\end{lemma-definition}

\begin{remark}
    The set-up of this Lemma-Definition is motivated by \cite[Chapter 10, Proposition 3.48]{Liu_ArithmeticCurve} (which cites \cite{Ray90}) and the proof of the claims in this Lemma-Definition are in the proof of the said proposition. Our assumption that $G$ fixes $x$ corresponds to the step of assuming $G$ is the inertia group $I$ in Qing Liu's proof.
\end{remark}

\begin{proposition}
\label{prop:quot_sing}
Let $S$ be the spectrum of a discrete valuation ring $\mathcal{O}_k$. Let $X$ be a semistable quasi-projective curve over $S$ and let $G$ be a finite group acting on $X$. Then the quotient $Y=X/G$ is semistable. In particular, let $x$ be a closed point of $X_s$ and let $y$ be its image in $Y_s$:
\begin{enumerate}
    \item If $X$ is smooth at $x$ then $Y$ is smooth at $y$.
    \item If $x$ is an ordinary double point on $X_s$  and $x$ has trivial stabilizer or $x$ has nontrivial stabilizer and the action of $G$ on $X$ fixing $x$ is branch-preserving then $y$ is an ordinary double point of $Y$.
    \item If $x$ is an ordinary double point on $X_s$ and $x$ has nontrivial stabilizer and the action of $G$ on $X$ fixing $x$ is branch-swapping then $y$ is a smooth point of $Y$.
\end{enumerate}
\end{proposition}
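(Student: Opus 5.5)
The plan is to reduce to a local computation at the completed local ring, following the structure of \cite[Chapter 10, Proposition 3.48]{Liu_ArithmeticCurve}. Since $X$ is quasi-projective and $G$ is finite, every $G$-orbit lies in an affine open, so $Y=X/G$ exists as a scheme. It is flat and proper over $S$ whenever $X$ is, because $\cO_Y=\cO_X^G$ is torsion-free. Formation of invariants commutes with flat base change, and also with completion at $y$ once we pass to the stabilizer. We may therefore base change to the strict henselization of $\cO_k$ (so $\kappa$ becomes separably closed) and replace $G$ by the inertia group $I=\mathrm{Stab}(x)$. We then get $\hat{\cO}_{Y,y}\cong \hat{\cO}_{X,x}^{I}$, where $I$ acts trivially on the residue field at $x$ after this base change. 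All three claims then become statements about the ring of invariants $\hat B^I$.

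Case (1), together with the part of (2) where the stabilizer is trivial, is the easy case. If $I$ is trivial, then $\hat{\cO}_{Y,y}\cong\hat B$, so the local structure (smooth, or node $uv=t^m$) is unchanged. If $x$ is smooth, then $\hat B=\cO_k[[u]]$ (after base change), and $\hat B^I$ is a normal complete local ring of dimension $2$, flat over $\cO_k$. Its special fiber sits inside $(\hat B/t\hat B)^I$ up to nilpotents, and that ring is an invariant subring of $\kappa[[\tilde u]]$, hence again a power series ring in one variable. I will argue that $\hat B^I/t$ is reduced and one-dimensional with a regular normalization, and that it is unibranch. It is therefore smooth, provided we check that $\hat B^I/t\to(\hat B/t)^I$ is an isomorphism. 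In general this holds only up to a finite-length cokernel, so I would instead cite the proof of Liu's Proposition 10.3.48 for this piece, which handles exactly this comparison.

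The main step is the nodal case. Write $\hat B=\cO_k[[u,v]]/(uv-t^m)$. The first thing to do is to choose $u,v$ so that $I$ acts by characters or swaps them. Following Liu/Raynaud, I would average to linearize: $I_0$ preserves the ideals $(u)$ and $(v)$, so $\sigma(u)=a_\sigma u$ and $\sigma(v)=b_\sigma v$ with units $a_\sigma,b_\sigma$, and $a_\sigma b_\sigma$ is determined by $\sigma(t^m)=t^m$. When $|I|$ is invertible in $\kappa$, averaging makes $a_\sigma,b_\sigma$ roots of unity; the wild case is the obstacle, and I would defer to \cite{Ray90}.

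In the branch-preserving case, $I=I_0$ acts on $(u,v)$ via a cyclic character $\chi$ on $u$ and $\chi^{-1}$ on $v$. The invariants are generated by $u^n$, $v^n$ and $t$, where $n$ is the order of $\chi$, and they satisfy $u^nv^n=t^{mn}$. This is again a node, of thickness $mn$, which gives (2).

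In the branch-swapping case, pick $\tau\in I\setminus I_0$. We may arrange $\tau(u)=v$ and $\tau(v)=u$, after first taking $I_0$-invariants, which gives a node $u'v'=t^{m'}$ by the previous step. The remaining $\bZ/2$-invariants of $\cO_k[[u',v']]/(u'v'-t^{m'})$ are generated by $s=u'+v'$ and $t$, because $u'v'=t^{m'}$ is already a function of $t$. So the invariant ring is $\cO_k[[s]]$, and $y$ is smooth, which proves (3). Semistability of $Y$ then follows pointwise from (1)--(3), since the invariant ring is reduced on the special fiber in each case.
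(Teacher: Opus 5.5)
Your proposal is correct and follows essentially the paper's route: the paper also gets the trivial-stabilizer case from $\widehat{\cO}_{Y,y}\cong\widehat{\cO}_{X,x}$ (via SGA1, Exp.~V, Prop.~2.2) and defers (1) and the nontrivial-stabilizer nodal cases to the proof of Liu's Proposition~10.3.48; your tame computation is a correct unpacking of that proof, and it already covers the paper's application ($p\ge 5$, $W$ a $2$-group). That computation gives invariants generated by $u^n,v^n,t$ in the branch-preserving case and $\cO_k[[u'+v']]$ in the branch-swapping case, where arranging $\tau(u')=v'$ exactly uses a Hensel square root of the $\tau$-invariant unit $u'\tau(u')/t^{m'}$ (available since $p\neq 2$).

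One aside is inaccurate but unused: the cokernel of $\hat B^I/t\to(\hat B/t)^I$ need not have finite length (e.g.\ $u\mapsto\zeta_p u$ over a base containing $\zeta_p$ acts trivially on the special fiber), and (1) in fact follows directly in all characteristics. Indeed, the norm $w=\prod_{g\in I}g(u)$ reduces to a series of order $|I|$, so Weierstrass preparation and a degree count give $\hat B^I=\cO_k[[w]]$.
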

\begin{proof} For (1), see \cite[Chapter 10 Prop 3.48(a)]{Liu_ArithmeticCurve}.

For (2), in the case where $x$ has trivial stabilizer, let $\pi\colon X \to Y$ denote the quotient morphism, and set $y = \pi(x)$. 
Since $x$ has trivial stabilizer, the decomposition group $G_d(x)$ is trivial. 
Taking $H = \{1\} = $ in \cite[Expos\'e V, Proposition 2.2]{Grothendieck} gives an isomorphism of completed local rings at $x$ and at $y$, and reducing modulo a uniformizer of $\mathcal{O}_K$ gives an isomorphism on the special fiber, namely we have that $\widehat{\mathcal{O}}_{Y_s,y} \cong \widehat{\mathcal{O}}_{X_s,x}$.
Since $x$ is an ordinary double point of $X_s$, this isomorphism asserts that $y$ is an ordinary double point of $Y_s$.

In the case where $x$ has nontrivial stabilizer and in case (3), see the proof of Proposition 3.48 (b) in \cite[Page 527]{Liu_ArithmeticCurve}.
\end{proof}

\begin{remark}
\label{rmk:may-not-regular}
    Note from the proof on \cite[p.527] {Liu_ArithmeticCurve} that the thickness (minimal $m$ in Lemma-Definition~\ref{lemma-definition})  of the nodal singularities may increase after quotienting by $G$. As a result, the semistable models we obtain from quotienting the Deligne--Rapoport model by Atkin--Lehner involutions may not be regular even at the image of regular nodal singularities in the Deligne--Rapoport model. 
\end{remark}

In the following sections, we often refer to an ordinary double point as a \emph{crossing} to shorten the sentences. 

Recall also that the dual graph of a semistable curve (reduced curve whose only singularities are ordinary double points) is obtained by constructing a vertex for each irreducible component of the curve, and an edge connecting vertex $i$ with vertex $j$ for each crossing of the corresponding irreducible components $C_i$ and $C_j$ (including the case where $i=j$). We shall use the following proposition in the computations of genera in the later sections.

\begin{proposition}
\label{prop:genus-dual-graph}
\cite[Chapter 10 Lemma 3.18.]{Liu_ArithmeticCurve}The arithmetic genus $p_a(X_{\eta})$ of a semistable curve $X$ with irreducible components $C_1,\dotsc, C_n$ of $X_s$ and dual graph $G$ is given by $$p_a(X_{\eta})=\beta(G)+\sum_{i=1}^n p_a(C_i'),$$ where $\beta(G)$ denotes the Betti number of the graph $G$ and $C_i'$ is the normalization of $C_i$.

In particular, if $X_s$ is obtained from a smooth curve $C$ crossing itself at $n$ ordinary double points, then $p_a(X_{\eta})=n+g(C)$, where $g(C)$ is the geometric genus of $C$. If $X_s$ is obtained from two smooth curves $C_1, C_2$ crossing at $n$ ordinary double points, then $p_a(X_{\eta})=n-1+g(C_1)+g(C_2)$, where $g(C_1),g(C_2)$ are the geometric genus of $C_1,C_2$ respectively. [See Figure~\ref{fig:special} for an illustration of these two special cases.]
\end{proposition}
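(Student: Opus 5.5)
The statement to prove is Proposition~\ref{prop:genus-dual-graph}: the general formula $p_a(X_\eta)=\beta(G)+\sum_i p_a(C_i')$, which is cited from Liu, together with the two special cases. I would take the general formula as given and show how the special cases follow. The approach has two steps.

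\textbf{Step 1: transfer genus from the generic to the special fiber.} $X\to S$ is proper and flat, so the arithmetic genus is constant in the family:
\[
p_a(X_\eta)=1-\chi(\mathcal{O}_{X_\eta})=1-\chi(\mathcal{O}_{X_s}).
\]
This reduces everything to computing $\chi(\mathcal{O}_{X_s})$ after base change to $\bar\kappa$. Let $\nu\colon \widetilde{X}_s=\bigsqcup_i C_i'\to X_s$ be the normalization.

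\textbf{Step 2: the normalization sequence.} At each node $P$, the quotient $\nu_*\mathcal{O}_{\widetilde X_s}/\mathcal{O}_{X_s}$ is one-dimensional over $\bar\kappa$. Hence
\[
\chi(\mathcal{O}_{X_s})=\sum_i \chi(\mathcal{O}_{C_i'})-\#\{\text{nodes}\}=n-\sum_i g(C_i')-E,
\]
where $E$ is the number of nodes, i.e.\ the number of edges of $G$. Substituting,
\[
p_a(X_\eta)=E-n+1+\sum_i g(C_i').
\]
Since $X_s$ is connected (by Zariski connectedness, as $X_\eta$ is geometrically connected), $G$ is connected and $\beta(G)=E-n+1$. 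This is the main point needing care: the connectedness input and the normalization sequence. Both are standard, so I expect no real obstacle.

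\textbf{Special cases.}
\begin{itemize}
\item For one smooth component $C$ with $n$ self-crossings, the dual graph has one vertex and $n$ loops. So $\beta=n$, and the normalization of $X_s$ is $C$ itself, giving $p_a=n+g(C)$.
\item For two smooth components $C_1,C_2$ meeting in $n$ nodes, the dual graph has two vertices and $n$ edges. So $\beta=n-1$, giving $p_a=n-1+g(C_1)+g(C_2)$.
\end{itemize}

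Finally, in our applications $X_\eta$ is smooth, so $p_a(X_\eta)$ equals the geometric genus used in later sections.
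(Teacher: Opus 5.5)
Your proposal is correct and agrees with the paper, which gives no argument beyond citing \cite[Chapter 10 Lemma 3.18]{Liu_ArithmeticCurve} for the general formula and reads off the two special cases from the Betti numbers of the dual graphs (one vertex with $n$ loops, resp.\ two vertices joined by $n$ edges), exactly as you do. Your Steps 1--2 reproduce the standard proof of Liu's lemma: flat invariance of $\chi(\mathcal{O})$, the normalization sequence with a one-dimensional skyscraper at each node, and connectedness of $X_s$, which holds because $H^0(X,\mathcal{O}_X)$ is a finite torsion-free $\cO_k$-algebra inside $H^0(X_\eta,\mathcal{O}_{X_\eta})=k$ and hence equals $\cO_k$; note also that you use $n$ both for the number of components and for the number of nodes.
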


\begin{figure}[h]
\includegraphics[scale=0.2]{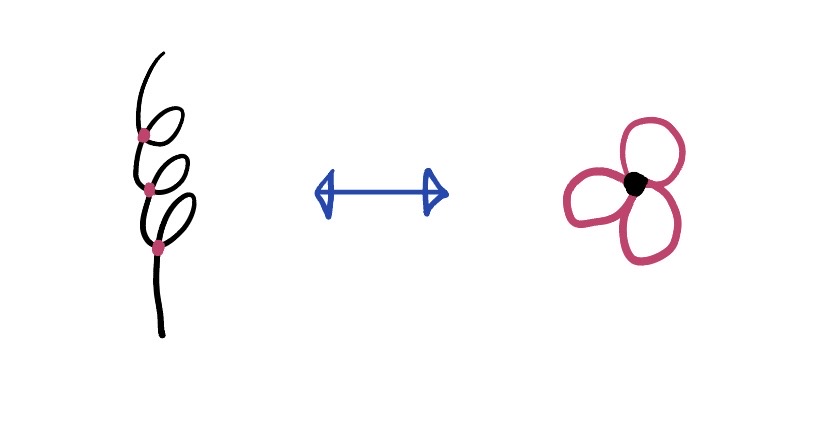}
\includegraphics[scale=0.5]{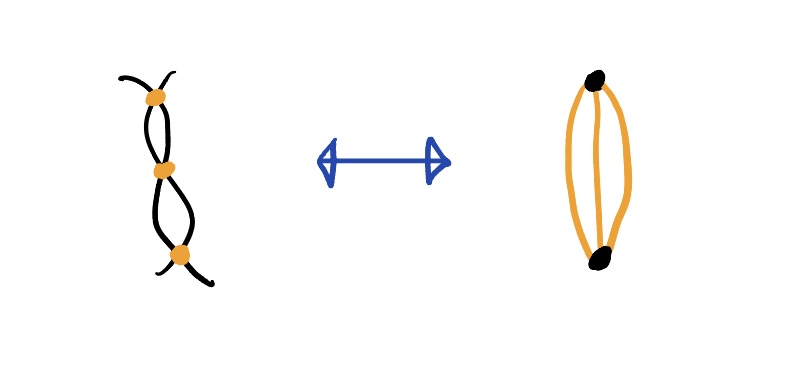}
\caption{two special cases in Proposition 2.4}
\label{fig:special}
\end{figure}

\begin{remark}
    \label{rmk:arith-geom-genus} Given a smooth, projective, and geometrically connected curve $X$ over a field, we have that its geometric genus is equal to its arithmetic genus \cite[Chapter 10 Remark 1.7]{Liu_ArithmeticCurve}, and thus we can talk about the genus $g(X)$ without ambiguity. Moreover, the arithmetic genus is preserved under reduction: given a projective flat integral model of $X$, $g(X)$ is equal to the arithmetic genus of any fiber \cite[Chapter 8, Corollary 3.6(a)]{Liu_ArithmeticCurve}.
\end{remark}

\section{Supersingular points fixed by Atkin--Lehner involutions}
\label{sec:fixedpoints}

When examining the special fibers at a prime $p$ of Atkin--Lehner quotients of modular curves, we will need to count the number of supersingular points fixed by an Atkin--Lehner involution $w_D$. In this section, we develop this count, summarized in Proposition~\ref{prop:fixed-point-formula} at the end of this section. Since the supersingular locus of $X_0(N)$ is disjoint from the cusps, here and for the rest of the paper, we do not need to discuss generalized elliptic curves.

\begin{lemma}
\label{lem:norm_D_elt}
    Let $D>1$ be a square-free integer. An order $\cO_f$ of conductor $f$ in $\mathbb{Q}(\sqrt{-D})$ contains an element of norm $D$ if and only if we have one of the following two cases:
    \begin{itemize}
        \item $D\equiv 3\pmod4$ and ($f=1$ or $f=2$); or
        \item $D\equiv 1,2\pmod4$ and $f=1$.
    \end{itemize}
    This happens if and only if $\sqrt{-D}\in \cO_f$.
\end{lemma}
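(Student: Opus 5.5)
We will describe $\cO_f$ explicitly and reduce the statement to an elementary norm equation. Let $K=\mathbb{Q}(\sqrt{-D})$ with $D>1$ square-free. The maximal order is $\cO_K=\mathbb{Z}[\omega]$, where $\omega=\sqrt{-D}$ if $D\equiv 1,2\pmod 4$ and $\omega=\frac{1+\sqrt{-D}}{2}$ if $D\equiv 3\pmod 4$. The order of conductor $f$ is $\cO_f=\mathbb{Z}+f\cO_K=\mathbb{Z}[f\omega]$. A general element is $\alpha=a+bf\omega$ with $a,b\in\mathbb{Z}$, and we compute its norm in each case.

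\emph{Case $D\equiv 1,2\pmod 4$.} Here $N(\alpha)=a^2+f^2b^2D$. If $N(\alpha)=D$, then $b\neq 0$: otherwise $a^2=D$, which is impossible for square-free $D>1$. Hence $f^2b^2D\le D$, which forces $f=|b|=1$ and $a=0$, so $\alpha=\pm\sqrt{-D}$. Conversely, for $f=1$ the element $\sqrt{-D}$ lies in $\cO_K$ and has norm $D$.

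\emph{Case $D\equiv 3\pmod 4$.} Write $\alpha=a+bf\frac{1+\sqrt{-D}}{2}$. Then
\[
4N(\alpha)=(2a+bf)^2+b^2f^2D .
\]
Setting this equal to $4D$, the same argument gives $b\neq0$, since otherwise $4a^2=4D$. Hence $b^2f^2\le 4$, so $|bf|\in\{1,2\}$.
\begin{itemize}
\item If $|bf|=1$, then $(2a+bf)^2=3D$. Because $D$ is square-free, $3D$ is a square only when $D=3$, and then $(2a\pm1)^2=9$ has solutions; these are the norm-$3$ elements of $\mathbb{Z}[\omega]$, and they occur only for $f=1$.
\item If $|bf|=2$, then $2a+bf=0$, so $\alpha=\pm\sqrt{-D}$. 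This is possible for $f=1$ (with $b=\pm2$) and for $f=2$ (with $b=\pm1$), since $2\omega-1=\sqrt{-D}\in\mathbb{Z}[2\omega]$.
\end{itemize}
Together these show that a norm-$D$ element exists exactly when $f\in\{1,2\}$.

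For the final sentence of the statement, note that the norm-$D$ elements found above are always $\pm\sqrt{-D}$, apart from the extra $f=1$, $D=3$ solutions. In every admissible case $\sqrt{-D}$ itself lies in $\cO_f$. Conversely, $\sqrt{-D}\in\cO_f$ gives an element of norm $D$. It remains to check that $\sqrt{-D}\in\cO_f$ exactly when $f$ is as listed: for $D\equiv1,2\pmod 4$ we need $\sqrt{-D}\in\mathbb{Z}+f\sqrt{-D}\,\mathbb{Z}$, i.e. $f=1$; for $D\equiv3\pmod4$ we have $\sqrt{-D}=2\omega-1$, which lies in $\mathbb{Z}+f\omega\mathbb{Z}$ iff $f\mid 2$.

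The only delicate point is the $D=3$, $|bf|=1$ subcase, where norm-$D$ elements other than $\pm\sqrt{-D}$ appear. It does not change the answer, because it forces $f=1$, a case already on the list.
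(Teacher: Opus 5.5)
Your proof is correct and follows essentially the same route as the paper. Both write elements of $\cO_f$ in an integral basis of $\cO_K$ with the $\omega$-coefficient divisible by $f$, bound that coefficient using the norm equation, and isolate the extra $D=3$ solutions, which force $f=1$. The differences are cosmetic: you take $\omega=\sqrt{-D}$ or $\frac{1+\sqrt{-D}}{2}$ and complete the square, whereas the paper takes $\omega=\frac{d_K+\sqrt{d_K}}{2}$ and computes the discriminant in $x$. Your direct check that $\sqrt{-D}\in\cO_f$ exactly for the listed $f$ is, if anything, slightly more explicit than the paper's treatment of the final claim.
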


\begin{proof}
     Recall that an order $\mathcal{O}_f$ of conductor $f$ in $K=\mathbb{Q}(\sqrt{-D})$ has an element of norm $D$ if and only if $N(\alpha)=D$ has a solution in $\mathcal{O}_f$, where $N(\alpha)$ denotes the norm of $\alpha$. Let $d_K$ denote the discriminant of $K=\mathbb{Q}(\sqrt{-D})$. Then any $\alpha \in O_K$ can be written as $\alpha=x+y\omega$ for some $x,y\in\mathbb{Z}$, where $\omega=\frac{d_K+\sqrt{d_K}}{2}$. (For this and other facts about quadratic imaginary orders used here, see \cite[Section 7]{Cox89}.) Note that $$N(\alpha)=N(x+y\omega)=\left(x+\frac{d_Ky}{2}\right)^2-\frac{d_Ky^2}{4}=x^2+d_Kxy+\frac{(d_K^2-d_K)y^2}{4}.$$ 
Moreover, $\alpha \in \mathcal{O}_f$ if and only if $f\mid y$. Therefore, $\mathcal{O}_f$ contains an element of norm $D$ if and only if there exist $x, y\in\mathbb{Z}$ such that $f\mid y$ and $N(x+y\omega)=D$.

If $D\equiv 3\pmod{4}$ then $d_K=-D$, and $N(x+y\omega)=x^2-Dxy+\frac{(D^2+D)y^2}{4}$.  We look for $x,y\in\mathbb{Z}$ such that $x^2-Dxy+\frac{(D^2+D)y^2}{4}=D$. The discriminant of this quadratic equation as a function of $x$ is $D^2y^2-D^2y^2-Dy^2+4D=-Dy^2+4D$, which is negative when $y^2>4$. It remains to discuss the existence of such $x\in \mathbb{Z}$ when $y=0,\pm 1, \pm 2$. If $y=0$, there is no $x\in\mathbb{Z}$ satisfying the equation since $D$ is square-free. If $y=\pm 1$, the equation is equivalent to $4x^2\mp 4Dx+D^2-3D=0$, i.e. $(2x\mp D)^2=3D$, which has an integer solution for $x$ if and only if $D=3$. If $y=\pm 2$, the equation is equivalent to $x^2\mp 2Dx+D^2=0$, which is equivalent to $(x\mp D)^2=0$ and always has an integer solution for $x$. 

If $D\equiv 1,2 \pmod{4}$ then $d_K=-4D$, and $N(x+y\omega)=x^2-4Dxy+(4D^2+D)y^2$. We look for $x,y\in\mathbb{Z}$ such that $x^2-4Dxy+(4D^2+D)y^2=D$. The discriminant of this quadratic equation as a function of $x$ is $16D^2y^2-16D^2y^2-4Dy^2+4D=-4Dy^2+4D$, which is negative when $y^2>1$. It remains to discuss the existence of such $x\in\mathbb{Z}$ when $y=0,\pm 1$. If $y=0$, there is no $x\in\mathbb{Z}$ satisfying the equation since $D$ is square-free. If $y= \pm1$, the equation is equivalent to $x^2\mp 4Dx+4D^2=0$, which is equivalent to $(x\mp 2D)^2=0$ and always has an integer solution for $x$.

Therefore, there exist $x, y\in\mathbb{Z}$ such that $f\mid y$ and $N(x+y\omega)=D$ if and only if ($D\equiv 3\pmod4$ and ($f=1$ or $f=2$)) or ($D\equiv 1,2\pmod4$ and $f=1$), hence the first part of the theorem.

When $D\equiv 3\pmod{4}$, $K$ has ring of integers $\cO_K=\mathbb{Z}\left[\frac{1+\sqrt{-D}}{2}\right]$, since $\cO_f$ is of index $f=1$ or $2$ in $\cO_K$, it contains the element $\sqrt{-D}$. When $D\equiv 1,2\pmod{4}$, $K$ has ring of integers $\cO_K=\mathbb{Z}\left[\sqrt{-D}\right]$, since $\cO_f$ is of index $f=1$ in $\cO_K$ and thus must be $\cO_K$ itself, it contains the element $\sqrt{-D}$ as well.  Conversely, if $\sqrt{-D}\in \cO_f$, then since $\sqrt{-D}$ has norm $D$, $\cO_f$ contains an element of norm $D$, completing the proof of the last claim.
\end{proof}

Given a supersingular elliptic curve $E$ over $\overline{\mathbb{F}}_{p}$, the unique cyclic subgroup scheme $C_p$ of order $p$ in $E$ is given by the kernel of the $p$-power Frobenius $E\to E^{(p)}$. (\cite[V.3.1]{Silverman} and \cite[p.94]{Katz--Mazur}) By the Chinese remainder theorem, any cyclic order-$pM$ subgroup scheme $C_{pM}$ of $E$ can be written as $C_{pM}=C_p+C_M$ for some uniquely determined subgroup scheme $C_M$ of order $M$ in $E$. This sets us up for the following lemma.

\begin{lemma}
\label{lem:M_vs_pM}
Let $p$ be a prime number and let $D, m$ be square-free positive integers such that $(D,m)=1$, $p\nmid D$ and $p\nmid m$. Let $M=Dm$. Then the map $[(E,C_M)]\mapsto [(E,C_M+C_p)]$ defines a bijection from the set of supersingular points on $X_0(M)_{\overline{\mathbb{F}}_p}$ to the set of supersingular points on $X_0(pM)_{\overline{\mathbb{F}}_p}$. Moreover, $w_D$ fixes $[(E,C_M)]$ if and only if it fixes $[(E,C_M+C_p)]$.
\end{lemma}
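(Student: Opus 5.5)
We will prove the bijection first and then compare the $w_D$-actions on the two sides.

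\textbf{The bijection.} Supersingular points on $X_0(M)_{\overline{\bF}_p}$ are isomorphism classes of pairs $(E,C_M)$, where $E$ is a supersingular elliptic curve over $\overline{\bF}_p$ and $C_M$ is a cyclic subgroup of order $M$. Since $p\nmid M$, the sum $C_M+C_p$ is cyclic of order $pM$, so the map is well defined on pairs. An isomorphism $\alpha\colon E\to E'$ carries $C_M$ to $C'_M$, and it carries $C_p=\ker(\Frob)$ to $C'_p$ because the kernel of Frobenius is intrinsic. Hence the map is well defined on isomorphism classes.

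For surjectivity, the discussion preceding the lemma (via the Chinese remainder theorem) shows that every cyclic $C_{pM}\subset E$ decomposes as $C_p+C_M$ with $C_M=C_{pM}[M]$ uniquely determined. For injectivity, suppose $\alpha\colon(E,C_M+C_p)\xrightarrow{\sim}(E',C'_M+C'_p)$. Restricting to $M$-torsion gives $\alpha(C_M)=C'_M$. So the inverse map is $[(E,C_{pM})]\mapsto[(E,C_{pM}[M])]$, and it is compatible with isomorphisms.

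\textbf{Compatibility with $w_D$.} By the moduli description of $w_D$ in the introduction, with $C_D$ the order-$D$ part of $C_M$ (which is also the order-$D$ part of $C_{pM}$, since $p\nmid D$):
\[
w_D[(E,C_M)]=[(E/C_D,(C_M+E[D])/C_D)],
\]
\[
w_D[(E,C_M+C_p)]=[(E/C_D,(C_M+C_p+E[D])/C_D)].
\]
The key point is that the image of $C_p$ under the separable isogeny $\pi\colon E\to E/C_D$ of degree $D$ prime to $p$ is exactly the order-$p$ subgroup of $E/C_D$. That subgroup is $\ker(\Frob_{E/C_D})$, because $\pi$ commutes with Frobenius (that is, $\pi^{(p)}\circ\Frob_E=\Frob_{E/C_D}\circ\pi$) and $\pi$ maps $C_p$ isomorphically onto its image, as $\gcd(p,D)=1$. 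Also, $E/C_D$ is again supersingular. Therefore
\[
w_D[(E,C_M+C_p)]=\Phi\bigl(w_D[(E,C_M)]\bigr),
\]
where $\Phi$ denotes the bijection; that is, $\Phi$ is $w_D$-equivariant. Since $\Phi$ is a bijection, a point is fixed by $w_D$ if and only if its image under $\Phi$ is fixed.

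\textbf{Main obstacle.} The delicate step is handling $C_p$ as a non-reduced (infinitesimal) group scheme. Statements like $C_{pM}[M]=C_M$, and the fact that $\pi(C_p)$ is the Frobenius kernel, must be argued scheme-theoretically rather than on geometric points. I would argue via the connected–étale decomposition: $C_p$ is connected and $C_M$ is étale, and since $E$ is supersingular, $\ker(\Frob)$ is the unique subgroup scheme of order $p$. With that, the image $\pi(C_p)$ is a subgroup scheme of order $p$ in the supersingular curve $E/C_D$, and uniqueness identifies it with the Frobenius kernel.
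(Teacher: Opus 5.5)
Your proof is correct and uses the same key fact as the paper: the image of $C_p$ in $E/C_D$ is the unique order-$p$ subgroup scheme (the Frobenius kernel) of $E/C_D$, and the $M$-part is recovered as the $M$-torsion of $C_p+C_M$. The difference is only in packaging: you show the bijection is $w_D$-equivariant and deduce the fixed-point statement formally, whereas the paper takes the bijection from the preceding CRT discussion and checks directly that an isomorphism $\phi\colon E\to E/C_D$ carries $C_M$ to $(C_M+E[D])/C_D$ if and only if it carries $C_p+C_M$ to $(C_p+C_M+E[D])/C_D$.
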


\begin{proof}
     It remains to show that $w_D$ fixes $[(E,C_M)]$ if and only if it fixes $[(E,C_{p}+C_M)]$.

     Recall that $w_D$ sends $[(E, C_M)]$ to $[(E/C_D, (C_M+E[D])/C_D)]$ and  $[(E, C_{p}+C_M)]$ to $[(E/C_D, (C_p+C_M+E[D])/C_D)]$, where $C_D$ is the unique order $D$ subgroup of $C_M$. Meanwhile, given any isomorphism $\phi: E\to E/C_D$, $\phi$ maps $C_p$ to the unique order $p$ subgroup scheme $(C_p+C_D)/C_D$ of $E/C_D$. Therefore, if $\phi$ maps $C_M$ to $(C_M+E[D])/C_D$, then it maps $C_p+C_M$ to $(C_p+C_D+C_M+E[D])/C_D=(C_{p}+C_M+E[D])/C_D$. Conversely, if $\phi$ maps $C_p+C_M$ to $(C_p+C_M+E[D])/C_D$, then it maps the unique order-$M$ subgroup $C_M$ of $C_p+C_M$ to the unique order-$M$ subgroup of $(C_p+C_M+E[D])/C_D$, which is $(C_M+E[D])/C_D$. This shows that $w_D$ fixes $[(E,C_M)]$ if and only if it fixes $[(E,C_p+C_M)]$, as desired.
\end{proof}

\begin{proposition}
\label{prop:2-1-p-nmid-D}
     Let $p\ge 5$ be a prime and let  $D\ge 5, m\ge 1$ be square-free integers such that $(D,m)=1$, $p\nmid D$ and $p\nmid m$. Let $M=Dm$. Let $E$ be a supersingular elliptic curve over $\overline{\mathbb{F}}_p$ and let $R:=\End(E)$. The function

$$\Phi:\left\{(\alpha, C_m): \begin{array}{c}
   \alpha\in R, \trd(\alpha)=0,\nrd(\alpha)=D,\\ C_m \text{ an order-}m \text{ cyclic subgroup }\\ \text{scheme of } E,
    \alpha(C_m)=C_m 
\end{array}\right\}/R^{\times}\to \left\{\begin{array}{c}
     w_D\text{-fixed points associated with } E \\
     \text{on }X_0(M)_{\overline{\mathbb{F}}_p}
\end{array}  \right\}$$ 
defined by $\Phi[(\alpha,C_m)]=[(E,\ker(\alpha) + C_m)]$ is a well-defined, surjective, $2$-to-$1$ map. Here, the $R^{\times}$ action is given by $u\cdot (\alpha,C_m)=(u\alpha u^{-1}, uC_m)$. Moreover, the fiber above $[(E,\ker(\alpha)+C_m)]$ is precisely $\{[\alpha,C_m],[-\alpha,C_m]\}$ where $[\alpha,C_m]\ne [-\alpha,C_m]$.
\end{proposition}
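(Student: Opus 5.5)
We verify, in order, that $\Phi$ is well-defined, that it is surjective, and that its fibers are exactly $\{[\alpha,C_m],[-\alpha,C_m]\}$ with two distinct elements.

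\emph{Well-definedness.} Let $\alpha\in R$ with $\trd(\alpha)=0$ and $\nrd(\alpha)=D$. Then $\alpha^2=-D$, so $\hat\alpha=-\alpha$ and $\deg\alpha=D$. Since $D$ is square-free and prime to $p$, $\ker\alpha$ is a subgroup of order $D$; it lies in $E[D]$ because $\hat\alpha\alpha=[D]$.

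\begin{itemize}
\item \emph{$\ker\alpha$ is cyclic.} If it were not, some prime $\ell\mid D$ would have $E[\ell]\subseteq\ker\alpha$, making $\alpha/\ell\in R$ (as $\alpha$ factors through $[\ell]$), and then $\ell^2\mid D$. So $\ker\alpha+C_m$ is cyclic of order $M$ by coprimality.
\item \emph{$w_D$ fixes the point.} The map $\alpha$ induces an isomorphism $E/\ker\alpha\cong E$. Under it, $(C_m+E[D])/\ker\alpha$ goes to $\alpha(C_m)+\alpha(E[D])=C_m+\ker(\hat\alpha)=C_m+\ker\alpha$, using $\alpha(E[D])=\ker\hat\alpha=\ker(-\alpha)$. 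This is the description of $w_D$ recalled in the introduction.
\item \emph{Invariance under $R^\times$.} Replacing $(\alpha,C_m)$ by $(u\alpha u^{-1},uC_m)$ changes $(E,\ker\alpha+C_m)$ by the automorphism $u$, so the isomorphism class is unchanged.
\end{itemize}

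\emph{Surjectivity.} Let $[(E,C_M)]$ be $w_D$-fixed and write $C_M=C_D+C_m$. Fixedness gives an isomorphism $\psi:E/C_D\to E$ carrying $(C_M+E[D])/C_D$ to $C_M$. Put $\alpha=\psi\circ\pi$, where $\pi:E\to E/C_D$ is the projection. Then $\alpha\in R$, $\deg\alpha=D$, $\ker\alpha=C_D$, and $\alpha(C_m)=C_m$ (compare the order-$m$ parts).

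The main step is to show $\trd(\alpha)=0$. Since $\alpha(E[D])=\psi(E[D]/C_D)$ is the order-$D$ part of $\psi((C_M+E[D])/C_D)=C_M$, we get $\alpha(E[D])=C_D=\ker\alpha$, so $\alpha^2$ kills $E[D]$. Hence $\alpha^2=D\beta$ with $\deg\beta=D^2\cdot D^2/(D^2\cdot D^2)=1$, so $\beta\in R^\times$.

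Now $\mathbb{Z}[\alpha]$ is an imaginary quadratic order (or $\alpha\in\mathbb{Z}$, impossible since $D$ is not a square), so $\beta\in\mathbb{Z}[\alpha]^\times\subseteq\{\pm1,\pm i,\pm\zeta_3^{\pm1}\}$.

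\begin{itemize}
\item If $\beta=1$, then $\alpha^2=D$, forcing $\alpha=\pm\sqrt D\notin R$ for $D$ not a square. This is a contradiction.
\item If $\beta=\pm i$ or a primitive cube or sixth root of unity, then $\alpha^2/D$ is a unit of order $4$, $3$ or $6$. A norm and trace computation then shows $\alpha=\sqrt{D}\cdot(\text{unit})$ has $\trd(\alpha)^2\in\{2D,D,3D\}$. Each is impossible for square-free $D\ge5$ ($D\ge5$ excludes the small cases $D=1,2,3$ coming from units), except that we need $\trd(\alpha)\in\mathbb{Z}$. Concretely, $\alpha^2-t\alpha+D=0$ together with $\alpha^2=D\beta$ gives $t\alpha=D(\beta+1)$, and squaring yields $t^2 D\beta = D^2(\beta+1)^2$, so $t^2=D(\beta+2+\beta^{-1})=D(2+\trd\beta)$. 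With $\trd\beta\in\{-2,-1,0,1\}$, square-freeness of $D>1$ forces $2+\trd\beta=0$.
\end{itemize}

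Hence $\beta=-1$ and $\trd\alpha=0$. This step is where I expect the real care to be needed.

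\emph{Fibers.} If $\Phi[(\alpha,C_m)]=\Phi[(\alpha',C_m')]$, take an automorphism $u$ with $u(\ker\alpha+C_m)=\ker\alpha'+C_m'$. After conjugating we may assume $u=1$, so $\ker\alpha=\ker\alpha'$ and $C_m=C_m'$. Then $\alpha'=v\alpha$ for some $v\in R^\times$, since both are degree-$D$ isogenies with the same kernel. The trace-zero condition on $v\alpha$ is handled by the same unit computation: $(v\alpha)^2=-D$ forces $v\alpha v^{-1}\cdot$ compatibility, and since $\ker$ is preserved, $\alpha v\alpha^{-1}$ is a unit.

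The cleanest route is to note that $\alpha'\alpha^{-1}=v$ and $\alpha'^2=\alpha^2=-D$. Inside the definite quaternion algebra, this means $v$ either commutes with $\alpha$ (giving $v=\pm1$, as $D\ge5$ rules out extra units in $\mathbb{Q}(\sqrt{-D})$) or $v\alpha v^{-1}\ldots$. I would check directly that $v\in\{\pm1\}$ up to replacing the pair by an $R^\times$-conjugate that fixes $\ker\alpha+C_m$.

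Finally, $[\alpha,C_m]\ne[-\alpha,C_m]$: if $u\alpha u^{-1}=-\alpha$ with $uC_m=C_m$ and $u$ fixing $\ker\alpha$, I expect to derive a contradiction using that $u$ induces an automorphism of $E$ preserving $\ker\alpha$ and anticommuting with $\alpha$. For instance, $u^2$ commutes with $\alpha$, so $u^2=\pm1$, and then the kernel condition should fail. This fiber analysis is the part I would expect to be the most delicate, and I would try first to reduce everything to the stabilizer of $(E,C_M)$ in $R^\times/\{\pm1\}$ acting on the two choices $\pm\alpha$.
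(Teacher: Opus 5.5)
Your well-definedness and surjectivity arguments are correct. For $\trd\alpha=0$ you use $t^2=D(2+\trd\beta)$, whereas the paper uses $D\mid t^2\Rightarrow D\mid t$ together with $t^2\le 4D$; both come down to $D\ge 5$ being square-free. Note that $\beta$ lies in $(\bQ(\alpha)\cap R)^\times$ rather than $\bZ[\alpha]^\times$, which is harmless.

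\textbf{The gap is the fiber analysis.} You leave it at ``I would check directly'' and ``I expect to derive a contradiction''. This is the substantive part of the proposition when $j(E)\in\{0,1728\}$, and the mechanism you suggest cannot work. If $u\in R^\times$ satisfies $u\alpha u^{-1}=-\alpha$, then $u(\ker\alpha)=\ker(-\alpha)=\ker\alpha$ automatically. Moreover, $uC_m=C_m$ is vacuous for $m=1$. So no kernel condition fails.

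Indeed, your sketch never uses $p\nmid D$, and without that hypothesis the conclusion is false. Take $p\equiv 3\pmod 4$, $p\ge 7$, $D=p$, $m=1$ and $E\colon y^2=x^3-x$ over $\bF_p$. The $p$-Frobenius $\pi$ satisfies $\pi^2=-p$ and anticommutes with a generator $g$ of $\Aut(E)\cong\bZ/4\bZ$. Hence $g\pi g^{-1}=-\pi$ and $[\pi]=[-\pi]$; this is the exceptional case of Proposition~\ref{prop:2-1-p-mid-D}.

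\textbf{The missing idea is the ramification of $B_{p,\infty}$ at $p$.} Your step ``$u^2$ commutes with $\alpha$, so $u^2=\pm1$'' is right. It forces $u^2=-1$, since $u^2=1$ would give $u=\pm1$, which commute with $\alpha$. Then $\{1,u,\alpha,u\alpha\}$ presents $B_{p,\infty}\cong\left(\frac{-1,-D}{\bQ}\right)$. This algebra is split at $p$ because $p$ is odd and $p\nmid D$, a contradiction, so $[\alpha,C_m]\neq[-\alpha,C_m]$.

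For the fiber, suppose $v\in R^\times$ and $\trd(v\alpha)=0$. Then $v\alpha=\alpha\bar v=\alpha v^{-1}$.
\begin{itemize}
\item If $v$ has order $4$, this says $v$ anticommutes with $\alpha$, giving $\left(\frac{-1,-D}{\bQ}\right)$ again.
\item If $v$ has order $3$ or $6$, then $s=v-v^{-1}$ anticommutes with $\alpha$ and $s^2=-3$. This gives $\left(\frac{-3,-D}{\bQ}\right)$, which is split at $p$ since $p\ge 5$ and $p\nmid D$.
\end{itemize}
Hence $v=\pm1$, and the fiber is exactly $\{[\alpha,C_m],[-\alpha,C_m]\}$. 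This is precisely how the paper treats the cases $j(E)=1728$ and $j(E)=0$.
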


\begin{remark}
        By \cite{Deuring1941}, as cited in \cite[Theorem 42.1.9]{Voight}, $\End(E)$ of a supersingular elliptic curve $E$ over $\overline{\mathbb{F}}_p$ is a maximal order in the quaternion algebra $B_{p,\infty}$, ramifying precisely at $p$ and $\infty$. The norm and trace of an element $\alpha$ in $R$ refers to the reduced norm $\nrd(\alpha)=\alpha\bar{\alpha}=\bar{\alpha}\alpha$ and the reduced trace $\trd(\alpha)=\alpha+\bar{\alpha}$ in $B_{p,\infty}$ and the element $\bar{\alpha}$ corresponds to the dual isogeny of $\alpha$. 
\end{remark}

\begin{proof}[Proof of Proposition~\ref{prop:2-1-p-nmid-D}]
    First, we will show well-definedness of the map $\Phi$. Let $(\alpha,C_m)$ be in the domain of $\Phi$. Then $\alpha$ defines a degree-$D$ isogeny from $E$ to $E$ and thus $\ker(\alpha)$ is an order-$D$ subgroup scheme of $E$. Since $(m,D)=1$, the sum $\ker(\alpha)+C_m$ is a direct sum, giving an order-$M$ subgroup scheme of $E$ (automatically cyclic since $M$ is square-free). Recall that $w_D[(E,C_M)]=[(E/C_D,(C_M+E[D])/C_D)]$, where $C_D$ is the unique order-$D$ subgroup of $C_M$. So $w_D[(E,\ker(\alpha)+C_m))]=[(E/\ker(\alpha),C_m+E[D]/\ker(\alpha))]$. To see that $[(E,\ker(\alpha)+C_m))]$ is a fixed point of $w_D$, it suffices to show that the isomorphism $\tilde{\alpha}: E/\ker(\alpha)\to E$ induced by $\alpha$ sends $(C_m+E[D])/\ker(\alpha)$ to $\ker(\alpha)+C_m$.  Indeed, $\tilde{\alpha}((C_m+E[D])/\ker(\alpha))=\alpha(C_m+E[D])=\alpha(C_m)+\alpha(E[D])=C_m+\ker(\alpha)$. This proves that the outputs of $\Phi$ are $w_D$-fixed points. Moreover, if $[(\alpha,C_m)]=[(\alpha',C_m')]$, then $\alpha'=u\alpha u^{-1}$ for some $u\in R^{\times}=\Aut(E)$ and $C_m'=uC_m$. So $\ker(\alpha')=u\ker(\alpha)$, and therefore the automorphism $u$ identifies $(E,\ker(\alpha)+C_m)$ and $(E,\ker(\alpha')+C_m')$ in the same isomorphism class, completing the proof of well-definedness.
    
    For surjectivity of $\Phi$, let $[(E,C_M)]$ be a fixed point of $w_D$ and let $C_D$ (resp. $C_m$) be the unique order-$D$ (resp. order-$m$) subgroup scheme of $C_M$. Then there exists an isomorphism $\phi: E\to E/C_D$ such that $\phi$ sends $C_M$ to $(C_M+E[D])/C_D$. Consequently, $\phi$ sends $C_D$ (resp. $C_m$) to $E[D]/C_D$ (resp. $(C_m+C_D)/C_D$), the unique order-$D$ (resp. order-$m$) subgroup schemes of $(C_M+E[D])/C_D$. Consider the natural projection $\pi: E\to E/C_D$ and let $\alpha:=\phi^{-1}\circ\pi$. Then $\alpha$ is an endomorphism on $E$ such that $\ker{\alpha}=C_D$. So $\alpha$ is a degree-$D$ isogeny on $E$. Moreover, $\alpha^2(E[D])=\alpha\left(\phi^{-1}\left(\pi(E[D])\right)\right)=\alpha\left(\phi^{-1}\left(E[D]/C_D\right)\right)=\alpha\left(C_D\right)=0$, so $E[D]$ is an order $D^2$ subgroup scheme of $\ker(\alpha^2)$ and thus must equal $\ker(\alpha^2)$. Therefore, $\alpha^2=u\circ [D]$ for some automorphism $u$ on $E$, where $[D]$ denotes the multiplication by $D$ map. Let $t:=\alpha+\overline{\alpha}$ be the trace of $\alpha$. Then $t\alpha=\alpha^2+\overline{\alpha}\circ\alpha=(u+1)\circ[D]$. Since $\deg(t\alpha)=t^2D$ and $\deg((u+1)\circ[D])=\deg(u+1)D^2$, we have $D\mid t^2$. Since $D$ is square-free, we must have $D\mid t$, i.e. $t=nD$ for some integer $n$. Since for any $x,y\in\mathbb{Z}$,  $x^2-xyt+y^2D=\deg(x[1]-y\alpha)\ge0$, we have $t^2-4D\le 0$ and thus $n^2D\le 4$. Since $D\ge 5$, we obtain $n=0$ and thus $t=0$, $u=-1$. Thus $\alpha$ has trace $0$ and norm $D$ as desired. Meanwhile, $\alpha(C_m)=\phi^{-1}\pi(C_m)=\phi^{-1}((C_m+C_D)/C_D)=C_m$, so $[(\alpha,C_m)]$ is in the domain of $\Phi$ and $\Phi[(\alpha,C_m)]=[(E,C_D+C_m)]=[(E,C_M)]$.

    Finally, we show that the function is $2$-to-$1$ with the claimed fiber. Suppose $[(E,C_M)]=\Phi[(\alpha,C_m)]=\Phi[(\beta,C_m')]$. Then there exists an isomorphism $u:E\to E$ such that $u(\ker(\alpha)+C_m)=\ker(\beta)+C_m'$. Since both $u(\ker(\alpha))$ and $\ker(\beta)$ are subgroup schemes of order $D$ in $\ker(\beta)+C_m'$, we must have  $u(\ker(\alpha))=\ker(\beta)$ and similarly, $u(C_m)=C_m'$. Choosing a different representative of $[(\alpha,C_m)]$ if necessary, we may assume without loss of generality that $\ker(\alpha)=\ker(\beta)$ and $C_m=C_m'$. Then $\beta=\gamma\circ \alpha$ for some $\gamma\in \Aut(E)$. By \cite[III.10.2]{Silverman}, when $p\ge 5$, 
    \begin{equation*}
    \Aut(E)=\begin{cases}
            \{\pm 1\}\quad\text{ if }j(E)\ne 1728, 0\\
            \text{cyclic group of order 4 \quad if }j(E)=1728\\
            \text{cyclic group of order 6 \quad if }j(E)=0\\
        \end{cases}
    \end{equation*}
    If $j(E)\ne 1728, 0$, then $\Aut(E)=\{\pm 1\}$ and conjugation by $\Aut(E)$ on $\End(E)$ is trivial, so $\beta=\pm\alpha$ and $[(\alpha,C_m)]\ne [(-\alpha,C_m)]$, proving the function $\Phi$ is $2$-to-$1$ with the claimed fiber in this case.

    Before we discuss the cases $j(E)=1728$ or $0$, first notice that any element $\gamma\in \Aut(E)=R^{\times}$ have norm $1$, so $\overline{\gamma}=\gamma^{-1}$. Meanwhile, $\trd(\gamma\alpha)=\gamma\alpha+\overline{\gamma\alpha}=\gamma\alpha+\bar{\alpha}\bar{\gamma}=\gamma\alpha-\alpha\overline{\gamma}=\gamma\alpha-\alpha \gamma^{-1}$. We also have the following equivalences: $\trd(\gamma\alpha)=0\iff \gamma\alpha=\alpha\gamma^{-1}\iff \alpha\gamma=\gamma^{-1}\alpha\iff \trd(\gamma^{-1}\alpha)=0$, which we shall use in the cases below.

    If $j(E)=1728$, let $g$ be a generator of $\Aut(E)$. Then $g$ is of order $4$ and norm $1$, so $g^2=-1$ and $\overline{g}=g^{-1}=-g$. We discuss whether $\trd(g\alpha)=0$.
    \begin{itemize}
        \item  If $\trd(g\alpha)=0$, then $g\alpha=\alpha g^{-1}=-\alpha g$. Thus by \cite[Lemma 2.2.5]{Voight}, $\{1,g,\alpha,g\alpha\}$ is a $\bQ$-basis of  $R\otimes\bQ$, which is isomorphic to $\left(\frac{-1,-D}{\bQ}\right)$, splitting at $p$ since $p$ is odd and $p\nmid D$ \cite[12.4.12 (a)]{Voight}. This contradicts $R\otimes\bQ\cong B_{p,\infty}$, so the case is impossible.

        \item If $\trd(g\alpha)\ne 0$, then $\trd(g^3\alpha)=\trd(g^{-1}\alpha)\ne 0$, so $\beta$, as a trace-$0$ element, can only be $\alpha$ or $g^2\alpha=-\alpha$. Meanwhile, $-\alpha$ and $\alpha$ are non-conjugate. Indeed, $(\pm 1)\alpha(\pm 1)^{-1}=\alpha\ne -\alpha$, and $(g^3)\alpha(g^3)^{-1}=(-g)\alpha(-g)^{-1}=g\alpha g^{-1}\ne -\alpha$ because otherwise $\trd(g\alpha)=g\alpha-\alpha g^{-1}=g\alpha+g^2\alpha g^{-1}=g(\alpha+g\alpha g^{-1})=0$, a contradiction. So $[(\alpha,C_m)]$ and $[(-\alpha,C_m)]$ are the two and only two elements in the fiber above $[(E,\ker(\alpha)+C_m)]$.
        
    \end{itemize}

   If $j(E)=0$, let $\sigma$ be a generator of $\Aut(E)$. Then $\sigma$ is of order $6$ and norm $1$, so $\sigma^3=-1$ (and thus $\sigma^2=-\sigma^{-1}$) and $\overline{\sigma}=\sigma^{-1}$. From our earlier analysis, $\trd(\sigma\alpha)=0\iff \trd(\sigma^{-1}\alpha)=0\iff \trd(-\sigma\alpha)=0\iff \trd(-\sigma^{-1}\alpha)=0$. So $\sigma\alpha,\sigma^2\alpha=-\sigma^{-1}\alpha, \sigma^{4}\alpha=-\sigma{\alpha}, \sigma^{5}\alpha=\sigma^{-1}\alpha$ are either all trace $0$, or all trace nonzero. Moreover, since $0=\sigma^3+1=(\sigma+1)(\sigma^2-\sigma+1)$ and $\sigma\ne -1$, we have $\sigma^2=\sigma-1$.

    \begin{itemize}
        \item  If $\trd(\sigma\alpha)=0$, then $\sigma\alpha=\alpha\sigma^{-1}$ and $\sigma^{-1}\alpha=\alpha\sigma$. Consider $s=\sigma-\sigma^{-1}$, then $s\alpha=\sigma\alpha-\sigma^{-1}\alpha=\alpha\sigma^{-1}-\alpha\sigma=-\alpha s$. Meanwhile, $s^2=\sigma^2-2+\sigma^{-2}=\sigma^2-2-\sigma=-3$. Therefore, similar to the $j(E)=1728$ case, $\{1,s,\alpha,s\alpha\}$ is a $\bQ$-basis of  $R\otimes\bQ$, which is isomorphic to $\left(\frac{-3,-D}{\bQ}\right)$, splitting at $p$ since $p\ge 5$ and $p\nmid 3D$. This contradicts $R\otimes\bQ\cong B_{p,\infty}$, so the case is impossible.
       
        \item If $\trd(\sigma\alpha)\ne 0$, then the only  $\beta=\sigma^{i}\alpha$ with trace $0$ are $\alpha$ and $\sigma^3\alpha=-\alpha$.  Suppose $-\alpha=\gamma\alpha\gamma^{-1}$ for some $\gamma\in \Aut(E)$, then $-\alpha\gamma=\gamma\alpha$, thus $\trd(\gamma\alpha)=-\alpha\gamma-\alpha(\bar{\gamma})=-\alpha\trd(\gamma)=0$ since $\bQ\cap \alpha\bQ=\{0\}$. So $\trd(\gamma)=0$, which is impossible for $\gamma\in \Aut(E)\cong \mu_6$. This shows that $[(\alpha,C_m)]$ and $[(-\alpha,C_m)]$ are the two and only two elements in the fiber above $[(E,\ker(\alpha)+C_m)]$. 
         
    \end{itemize}
\end{proof}

\begin{proposition}
\label{prop:2-1-p-mid-D}
Let $p\ge 5$ be a prime and let  $D\ge 5, m\ge 1$ be square-free integers such that $(D,m)=1$ and $p\mid D$ (and thus $p\nmid m$). Let $M=Dm$.
Let $E$ be a supersingular elliptic curve over $\overline{\mathbb{F}}_p$ and let $R:=\End(E)$. Under these new assumptions, the function $\Phi$ defined in Proposition~\ref{prop:2-1-p-nmid-D} is still a well-defined, surjective, $2$-to-$1$ map. Moreover, the fiber above $[(E,\ker(\alpha)+C_m)]$ is precisely $\{[\alpha,C_m],[-\alpha,C_m]\}$ where $[\alpha,C_m]\ne [-\alpha,C_m]$, unless $j(E)=1728$ and $\trd(g\alpha)=0$, where $g$ denotes a generator of $\Aut(E)$. In the case where $j(E)=1728$ and $\trd(g\alpha)=0$, the two distinct elements in the fiber above $[(E,\ker(\alpha)+C_m)]$ are $[(\alpha,C_m)]$ and $[(g\alpha,C_m)]$.
\end{proposition}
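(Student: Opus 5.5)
The plan is to reuse the proof of Proposition~\ref{prop:2-1-p-nmid-D} almost verbatim, and to change only the places where the hypothesis $p\nmid D$ was used.

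\textbf{Well-definedness and surjectivity.} These parts never used $p\nmid D$. We only used that $\ker(\alpha)$ is an order-$D$ subgroup scheme, that $\ker(\alpha)+C_m$ is direct because $(D,m)=1$, and that $E[D]$ has order $D^2$ as a group scheme. The degree and trace argument then gives $D\mid t$, $t^2\le 4D$, hence $t=0$ since $D\ge5$. When $p\mid D$, the subgroup $C_D$ contains the Frobenius kernel $C_p$ and $E[D]$ is non-reduced. Every step, however, is a statement about finite flat subgroup schemes and degrees of isogenies, so it goes through unchanged.

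\textbf{The fiber.} As before, we first normalize to $\ker\beta=\ker\alpha$, so that $\beta=\gamma\alpha$ with $\gamma\in\Aut(E)$. If $j(E)\ne0,1728$, then $\beta=\pm\alpha$ and conjugation by $\Aut(E)$ is trivial, so the argument is identical. In the cases $j(E)\in\{0,1728\}$ where $\trd(g\alpha)\ne0$ (resp.\ $\trd(\sigma\alpha)\ne0$), the earlier arguments never used $p\nmid D$ and apply verbatim.

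The genuinely new cases are those previously excluded by the splitting of $\left(\frac{-1,-D}{\bQ}\right)$ or $\left(\frac{-3,-D}{\bQ}\right)$ at $p$. That argument fails now, because $p\mid D$ and these algebras may well be ramified at $p$. I would treat these two cases by direct orbit counting.

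\emph{Case $j=1728$, $\trd(g\alpha)=0$.} Here $g\alpha=-\alpha g$, and the trace-zero candidates are $\pm\alpha,\pm g\alpha$, all with kernel $\ker\alpha$. Conjugation by $g$ sends $\alpha\mapsto-\alpha$ and $g\alpha\mapsto -g\alpha$. Conjugation by $\pm1,\pm g$ never sends $\alpha$ to $\pm g\alpha$, since $\alpha$ and $g\alpha$ are $\bQ$-linearly independent. Taking the $C_m$ into account, if $(g\alpha,C_m)$ lies in the domain then $g\alpha(C_m)=C_m$ forces $gC_m=C_m$. So $[(-\alpha,C_m)]=[(\alpha,C_m)]$, and the fiber consists of the two classes $[(\alpha,C_m)]$ and $[(g\alpha,C_m)]$. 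These are distinct, and both map to the same point because $\ker(g\alpha)=\ker\alpha$.

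A subtle point I expect to need care with: the fiber must be described via the full orbit of pairs. One has $g(\ker\alpha+C_m)=\ker\alpha+gC_m$, so one should check directly that exactly two $R^\times$-orbits of pairs $(\beta,C')$ satisfy $\ker\beta+C'\cong \ker\alpha+C_m$. I would do this by listing the pairs $(\pm\alpha,\pm g\alpha)\times\{C_m,gC_m\}$ that lie in the domain and computing the $\langle g\rangle$-orbits.

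\emph{Case $j=0$, $\trd(\sigma\alpha)=0$.} The six elements $\sigma^i\alpha$ all have trace zero. Conjugation by $\sigma$ sends $\alpha\mapsto\sigma\alpha\sigma^{-1}=\sigma^2\alpha$, so the $\mu_6$-conjugation orbits are $\{\alpha,\sigma^2\alpha,\sigma^4\alpha\}$ and $\{-\alpha,\sigma\alpha,\sigma^5\alpha\}$, with representatives $\alpha$ and $-\alpha$. The same bookkeeping with $C_m$ as above then gives exactly two classes $[(\alpha,C_m)]\neq[(-\alpha,C_m)]$, matching the statement. This orbit and $C_m$-bookkeeping in the two exceptional automorphism cases is the main obstacle.
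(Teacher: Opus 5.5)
Your route is the paper's route. Well-definedness and surjectivity are quoted from Proposition~\ref{prop:2-1-p-nmid-D}. Only the two automorphism cases where the splitting argument for $\left(\frac{-1,-D}{\bQ}\right)$ and $\left(\frac{-3,-D}{\bQ}\right)$ breaks down are redone, by computing $\Aut(E)$-conjugacy classes of the trace-zero elements $\gamma\alpha$. Your computations are exactly the paper's: $g\alpha g^{-1}=-\alpha$, $[\alpha]\neq[g\alpha]$, and the $\mu_6$-orbits $\{\alpha,\sigma^2\alpha,\sigma^4\alpha\}$ and $\{-\alpha,\sigma\alpha,\sigma^5\alpha\}$. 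However, the paper's proof never tracks $C_m$ in these two cases. The ``subtle point'' you deferred is a genuine gap, and it cannot be closed in favour of the statement as written.

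\textbf{The case $j(E)=1728$.} Your argument only covers the situation $gC_m=C_m$. Since $g^2=-1$ fixes every subgroup, the stabilizer of $C_m$ in $\Aut(E)$ is either all of $\langle g\rangle$ or $\{\pm1\}$. Suppose it is $\{\pm1\}$. Then $g\alpha(C_m)=gC_m\neq C_m$, so $(\pm g\alpha,C_m)$ is not in the domain of $\Phi$ at all. Moreover, the only $u\in R^\times$ with $u\alpha u^{-1}=-\alpha$ are $\pm g$, and these do not fix $C_m$. Hence the fiber is $\{[(\alpha,C_m)],[(-\alpha,C_m)]\}$, with the two classes distinct. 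The map is still $2$-to-$1$, but the last sentence of the statement fails.

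\textbf{This case really occurs.} The eigenvalue argument in the proof of Lemma~\ref{lem:cO-welldefine}(4) shows $gC_m\neq C_m$ whenever $m$ has an odd prime factor. For a concrete instance, take $p=D=7$, $E\colon y^2=x^3-x$, and $g(x,y)=(-x,iy)$ with $i^2=-1$. Let $\alpha$ be the $7$-power Frobenius, so $\alpha^2=-7$ and $g\alpha=-\alpha g$. Take $m=11$ and let $C_m$ be one of the two eigenlines of $\alpha$ on $E[11]$; since $-7\equiv 4\pmod{11}$ these exist, and $g$ swaps them.

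\textbf{The correct conclusion.} When $j(E)=1728$ and $\trd(g\alpha)=0$, the fiber is $\{[(\alpha,C_m)],[(g\alpha,C_m)]\}$ if $gC_m=C_m$ (e.g.\ $m=1$), and $\{[(\alpha,C_m)],[(-\alpha,C_m)]\}$ otherwise. This does not damage Lemma~\ref{lem:cO-welldefine}, since $\cO_{[-\alpha]}=\cO_{[\alpha]}$.

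\textbf{The case $j(E)=0$.} Here your bookkeeping does go through as you expect. The stabilizer of $C_m$ in $\mu_6$ is either $\mu_6$ or $\{\pm1\}$, because the action factors through $\mu_6/\{\pm1\}$, which has order $3$. In both cases the fiber is $\{[(\alpha,C_m)],[(-\alpha,C_m)]\}$, with the two classes distinct.
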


\begin{proof}
Under this set-up, the well-definedness and surjectivity of $\Phi$ follows exactly as in the proof of Proposition~\ref{prop:2-1-p-nmid-D}. The analysis of fibers and consequently proving the map is $2$-to-$1$ in Proposition~\ref{prop:2-1-p-nmid-D} works except for the $\trd(g\alpha)=0$ case when $j(E)=1728$, and the $\trd(\sigma\alpha)=0$ case when $j(E)=0$, which fail because $p\mid D$. We therefore treat these two cases separately here.

    If $j(E)=0$ and $\trd(\sigma\alpha)=0$, then for each integer $i$ we have $\trd(\sigma^i\alpha)=0$, so the distinct choices for $\beta$ are $\sigma^{i}\alpha$ for $i=0,1,2,\dotsc, 5$. Since $\trd(\sigma\alpha)=\sigma\alpha-\alpha\sigma^{-1}=0$, we have $\sigma\alpha=\alpha\sigma^{-1}$. Thus $\sigma\alpha\sigma^{-1}=\sigma^2\alpha$ and $\sigma^{2}\alpha\sigma^{-2}=\sigma(\sigma^2\alpha)\sigma^{-1}=\sigma^{4}\alpha$. Since $\pm 1, \pm \sigma, \pm \sigma^2$ are the only six elements of $\Aut(E)$ in this case, and conjugation by $\pm \gamma$ acts the same, we obtain the conjugacy class of $\alpha$ under $\Aut(E)$ is precisely $\{\alpha, \sigma^2\alpha, \sigma^4\alpha\}$. Similarly, $\{\sigma\alpha, \sigma^3\alpha, \sigma^5\alpha\}$ form the other conjugacy class. Note that $\sigma^3\alpha=-\alpha$. So we again arrive at $[(\alpha,C_m)]\ne [(-\alpha,C_m)]$ are the two and only two elements in the fiber above $[(E,\ker(\alpha)+C_m)]$. 

If $j(E)=1728$ and $\trd(g\alpha)=0$, then $\trd(g^3\alpha)=0$ as well. So $\beta=\alpha, g\alpha, -\alpha,$ or $-g\alpha$. Since $\trd(g\alpha)=0$, we have $g\alpha=\alpha g^{-1}$, and thus $-\alpha=g^2\alpha=g\alpha g^{-1}$. Therefore, $[-\alpha]=[\alpha]$. Similarly, $[-g\alpha]=[g\alpha]$. It remains to show $[\alpha]\ne[g\alpha]$. Indeed, $(\pm 1)\alpha(\pm 1)^{-1}=\alpha\ne g\alpha$, and $(g^3)\alpha(g^3)^{-1}=(-g)\alpha(-g)^{-1}=g\alpha g^{-1}\ne g\alpha$ because otherwise $g^{-1}=1$, contradicting $g$ has order $4$. So $[(\alpha,C_m)]\ne [(g\alpha,C_m)]$ are the two and only two elements in the fiber above $[(E,\ker(\alpha)+C_m)]$.
\end{proof}

The following lemma is clear.
\begin{lemma}
\label{lem:bij-cm}
    Let $p$ be a prime and $D$ be a positive integer. Let $E$ be a supersingular elliptic curve over $\overline{\mathbb{F}}_p$. Let $R:=\End(E)$, a maximal order in $B_{p,\infty}=\End^0(E)$. The map from $\{\alpha\in R \mid \trd(\alpha)=0,\nrd(\alpha)=D\}$ to $\{\iota: \mathbb{Q}(\sqrt{-D})\hookrightarrow B_{p,\infty}\mid \iota(\sqrt{-D})\in R\}$ sending $\alpha$ to $\iota_{\alpha}:a+b\sqrt{-D}\mapsto a+b\alpha$ is a bijection, which induces a bijection on $R^{\times}$ conjugacy classes. 
\end{lemma}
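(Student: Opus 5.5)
The plan is to check three things directly from the definitions: that $\alpha\mapsto\iota_\alpha$ lands in the target set, that it is a bijection, and that it is compatible with conjugation by $R^{\times}$.

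\emph{Well-definedness.} Let $\alpha\in R$ with $\trd(\alpha)=0$ and $\nrd(\alpha)=D$. Its reduced characteristic polynomial is $x^2-\trd(\alpha)x+\nrd(\alpha)=x^2+D$, so $\alpha^2=-D$. Hence the $\bQ$-linear map $\iota_\alpha\colon a+b\sqrt{-D}\mapsto a+b\alpha$ respects multiplication, since $(a+b\alpha)(c+d\alpha)=(ac-Dbd)+(ad+bc)\alpha$. It is an injective $\bQ$-algebra homomorphism $\bQ(\sqrt{-D})\hookrightarrow B_{p,\infty}$, injective because the source is a field. Finally, $\iota_\alpha(\sqrt{-D})=\alpha\in R$.

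\emph{Bijection.} The inverse sends $\iota\mapsto\iota(\sqrt{-D})$. Given an embedding $\iota$ with $\beta:=\iota(\sqrt{-D})\in R$, we have $\beta^2=-D$ and $\beta\notin\bQ$, since $-D<0$ has no rational square root. For a non-scalar element of a quaternion algebra, the minimal polynomial equals the reduced characteristic polynomial. Here that polynomial is $x^2+D$, so $\trd(\beta)=0$ and $\nrd(\beta)=D$. An embedding is determined by the image of the generator $\sqrt{-D}$, so the two maps are mutually inverse.

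\emph{Conjugation.} Let $R^{\times}$ act on embeddings by $(u\cdot\iota)(x)=u\,\iota(x)\,u^{-1}$, and on elements by $\alpha\mapsto u\alpha u^{-1}$. We have $u\iota_\alpha u^{-1}=\iota_{u\alpha u^{-1}}$. Conjugation by a unit of $R$ preserves $R$, trace and norm, so both actions are well defined. The map is therefore $R^{\times}$-equivariant and descends to a bijection on conjugacy classes.

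There is no real obstacle. The only point needing care is the claim that $\beta^2=-D$ forces $\trd(\beta)=0$ and $\nrd(\beta)=D$, which rests on the minimal polynomial of a non-central element being its reduced characteristic polynomial.
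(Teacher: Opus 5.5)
Your proposal is correct and is the routine verification the paper leaves implicit: the paper gives no proof at all and simply calls the lemma clear. Your argument supplies everything needed. The inverse is $\iota\mapsto\iota(\sqrt{-D})$; the reduced characteristic polynomial of the non-scalar element $\iota(\sqrt{-D})$ gives $\trd=0$ and $\nrd=D$; and the equivariance $u\,\iota_\alpha\,u^{-1}=\iota_{u\alpha u^{-1}}$ handles the $R^{\times}$-conjugacy classes.
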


\begin{remark}
    Let $p$ be a prime and $D$ be a positive integer. By the Albert--Brauer--Hasse--Noether theorem as in \cite[Proposition 14.6.7]{Voight}, $K=\mathbb{Q}(\sqrt{-D})$ embeds into $B_{p,\infty}$ if and only if $p$ does not split in $K$, i.e. $\left(\frac{d_K}{p}\right)\ne 1$. If $p$ is further assumed to be odd and $p\nmid D$, this is equivalent to $\left(\frac{-D}{p}\right)=-1$. In particular, the sets involved in the above lemmas and propositions can be empty.
\end{remark}

Following the set-up of Lemma~\ref{lem:bij-cm}, let $\cO_{\alpha}:=\iota_{\alpha}^{-1}(\mathbb{Q}(\alpha)\cap R)$. Then $\cO_{\alpha}$ is an order in $\mathbb{Q}(\sqrt{-D})$ containing $\sqrt{-D}$, and $\iota_{\alpha}$ restricted to $\cO_{\alpha}$ is an optimal embedding (see \cite[Definition 30.3.2]{Voight}) from $\cO_{\alpha}$ to $R$. 

\begin{proposition}
\label{prop: drop-m-cyclic}
    Let $p$ be a prime and let  $D\ge 5, m\ge 1$ be square-free integers such that $(D,m)=1$ and $p\nmid m$. Let $E$ be a supersingular elliptic curve over $\overline{\mathbb{F}}_p$ and let $R:=\End(E)$. The function 

$$\Psi:\left\{(\alpha, C_m): \begin{array}{c}
   \alpha\in R, \trd(\alpha)=0,\nrd(\alpha)=D,\\ C_m \text{ an order-}m \text{ cyclic subgroup }\\ \text{scheme of } E,
    \alpha(C_m)=C_m 
\end{array}\right\}/R^{\times}\to \left\{\alpha \in R\ : \trd(\alpha)=0,\nrd(\alpha)=D\right\}/R^{\times}$$ 
defined by $\Psi[(\alpha,C_m)]=[\alpha]$ is a well-defined map. Here, the $R^{\times}$ actions are given by $u\cdot (\alpha,C_m)=(u\alpha u^{-1}, uC_m)$ and $u\cdot\alpha=u\alpha u^{-1}$ respectively for the domain and codomain. Moreover, fixing an equivalence class $[(\alpha,C_m)]$ in the domain, the set map $\psi:[(\alpha,C_m)]\to [\alpha], (\beta,C)\mapsto \beta$ is a bijection.

Let $\nu(m,\alpha)$ denote the number of order-$m$ cyclic subgroup schemes $C_m$ of $E$ stabilized by $\alpha$. (Here and throughout, we say $C_m$ is stabilized by $\alpha$, or $\alpha$-stable, to mean $\alpha(C_m)=C_m$.) Then for any $[\alpha]$ in the codomain, the preimage set $\Psi^{-1}([\alpha])$ contains precisely $\nu(m,\alpha)$ elements. Moreover, $$\nu(m,\alpha)=\prod_{\substack{\ell \mid m\\ \ell \text{ is prime }}}\nu(\ell,\alpha),$$ where $$\nu(\ell,\alpha)=\left(1+\left(\frac{-D}{\ell}\right)\right)$$ if $\ell$ is odd; if $\ell=2$ is a factor of $m$,  $$\nu(2,\alpha)=\begin{cases}
    3\quad\text{ if } \alpha \text{ acts as identity on }E[2],\\
    1\quad\text{ otherwise.}
\end{cases}$$ 

\end{proposition}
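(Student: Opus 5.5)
Well-definedness and the bijection $\psi$ come first, directly from the definition of the $R^{\times}$-action. If $(\beta,C)=(u\alpha u^{-1},uC_m)$, then $\beta$ lies in the conjugacy class $[\alpha]$, so $\Psi$ is well defined. The map $\psi$ is surjective onto $[\alpha]$ because every $u\alpha u^{-1}$ arises as the first coordinate of $u\cdot(\alpha,C_m)$; note that $uC_m$ is automatically $u\alpha u^{-1}$-stable. Injectivity is the real content. Suppose $u\alpha u^{-1}=u'\alpha u'^{-1}$. Then $v:=u^{-1}u'$ commutes with $\alpha$, so $v\in R^{\times}\cap\bQ(\alpha)$. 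This group is the unit group of the imaginary quadratic order $\cO_\alpha$. Since $\sqrt{-D}\in\cO_\alpha$ and $D\ge5$, the order $\cO_\alpha$ is neither $\bZ[i]$ nor $\bZ[\zeta_3]$ (by Lemma~\ref{lem:norm_D_elt} it is the maximal order or the order of conductor 2 in $\bQ(\sqrt{-D})$, $D\ne1,3$). Hence $v=\pm1$, and so $uC_m=u'C_m$ because $-1$ fixes every subgroup. Thus $\psi$ is a bijection.

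Next comes the fiber count. Fix a representative $\alpha$. Any element of $\Psi^{-1}([\alpha])$ has a representative of the form $(\alpha,C)$ by moving it with $u$. Two such pairs $(\alpha,C)$ and $(\alpha,C')$ are equivalent only via some $u$ commuting with $\alpha$, i.e. $u=\pm1$, which forces $C=C'$. So $\Psi^{-1}([\alpha])$ is in bijection with the set of $\alpha$-stable cyclic order-$m$ subgroups, and has $\nu(m,\alpha)$ elements. Since $p\nmid m$, $E[m]\cong(\bZ/m)^2$ and $E[m]\cong\prod_{\ell\mid m}E[\ell]$ by CRT. A cyclic subgroup of order $m$ decomposes uniquely as a sum of order-$\ell$ lines, and it is $\alpha$-stable iff each line is. This gives the multiplicativity $\nu(m,\alpha)=\prod\nu(\ell,\alpha)$.

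The remaining task is to count $\alpha$-stable lines in $E[\ell]\cong\bF_\ell^2$, i.e. eigenlines of $\bar\alpha\in M_2(\bF_\ell)$. Here $\bar\alpha$ has characteristic polynomial $x^2+D$, since the trace and determinant on $E[\ell]$ are $\trd$ and $\nrd$ mod $\ell$. For $\ell$ odd and $\ell\nmid D$, the polynomial $x^2+D$ is separable. It has two roots exactly when $\left(\frac{-D}{\ell}\right)=1$, giving two eigenlines, and none when the symbol is $-1$. This matches $1+\left(\frac{-D}{\ell}\right)$. Since $(D,m)=1$, the case $\ell\mid D$ does not arise. For $\ell=2$ with $D$ odd, the characteristic polynomial is $x^2+1=(x+1)^2$. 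If $\bar\alpha=\mathrm{id}$, all 3 lines are stable. Otherwise $\bar\alpha$ is a nontrivial unipotent with a single eigenline, giving 1.

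The main obstacle is the injectivity of $\psi$, i.e. showing that the centralizer of $\alpha$ in $R^{\times}$ is only $\{\pm1\}$. This is where $D\ge5$ enters: it excludes the extra units of $\bZ[i]$ and $\bZ[\zeta_3]$. Everything else is linear algebra over $\bF_\ell$.
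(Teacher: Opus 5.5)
Your proposal is correct and follows essentially the same route as the paper. In both, the key step is that the centralizer of $\alpha$ in $R^{\times}$ is $\bQ(\alpha)\cap R^{\times}\cong\cO_\alpha^{\times}=\{\pm1\}$ for $D\ge5$, which gives injectivity of $\psi$; this is followed by CRT multiplicativity and eigenline counting over $\bF_\ell$. The only cosmetic differences are that you normalize the first coordinate to $\alpha$ directly for the fiber count rather than passing through the unquotiented map $\widetilde{\Psi}$, and that you cite Lemma~\ref{lem:norm_D_elt} where the paper simply uses $\cO_\alpha^{\times}\subseteq\cO_K^{\times}=\{\pm1\}$.
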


\begin{proof}
 First, the well-definedness of $\Psi$ is clear from the definition and the $R^{\times}$ actions. 
 
 Before we analyze $\#\Psi^{-1}([\alpha])$, we first prove the set map $\psi:[(\alpha,C_m)]\to [\alpha], (\beta,C)\mapsto \beta$ is a bijection. Indeed, $\psi$ is clearly well-defined and surjective. Assume to the contrary that $\psi$ is not injective, then there exists some $(\beta,C),(\beta,C')\in [(\alpha,C_m)]$ such that $C\ne C'$. Thus $(\beta,C')=u\cdot(\beta,C)=(u\beta u^{-1}, uC)$ for some $u\in R^{\times}$. In particular, $u$ is in the centralizer of $\beta$. Since the centralizer of $\beta$ in the quaternion algebra $B_{p,\infty}$ is $\bQ(\beta)$, we obtain $u\in \bQ(\beta)\cap R^{\times}$. Thus $\iota_{\beta}^{-1}(u)\in \cO_{\beta}^{\times}\subseteq \cO_K^{\times}$. Since $K=\bQ(\sqrt{-D})$ for $D\ge 5$, $\cO_K^{\times}=\{\pm 1\}$. Thus $u\in\{\pm 1\}$ and $C'=uC=C$, a contradiction.

 Since each $\psi$ is a bijection, the number of elements in the fiber $\Psi^{-1}([\alpha])$ is the same as the number of elements in the fiber $\widetilde{\Psi}^{-1}(\alpha)$, where $\widetilde{\Psi}$ denotes the map $$\left\{(\alpha, C_m): \begin{array}{c}
   \alpha\in R, \trd(\alpha)=0,\nrd(\alpha)=D,\\ C_m \text{ an order-}m \text{ cyclic subgroup }\\ \text{scheme of } E,
    \alpha(C_m)=C_m 
\end{array}\right\}\to \left\{\alpha \in R\ : \trd(\alpha)=0,\nrd(\alpha)=D\right\},(\alpha,C_m)\mapsto \alpha.$$
Clearly $\#\widetilde{\Psi}^{-1}(\alpha)=\nu(m,\alpha)$, so $\#\Psi^{-1}([\alpha])=\nu(m,\alpha)$ as well. It remains to prove the formulae for $\nu(m,\alpha)$.

Since $p\nmid m$, $E[m]\cong \bZ/m\bZ \times \bZ/m\bZ$. 
By the Chinese remainder theorem,
$$E[m]=\bigoplus_{\substack{\ell \mid m\\ \ell \text{ is prime }}}E[\ell]$$
and the map $$(C_{\ell})_{\substack{\ell \mid m\\ \ell \text{ is prime }}}\mapsto \bigoplus_{\substack{\ell \mid m\\ \ell \text{ is prime }}}C_{\ell}$$ is a bijection from $\prod_{\ell}\{\text{cyclic order-}\ell\text{ subgroup schemes of }E\}$ to $\{\text{cyclic order-}m\text{ subgroup schemes of }E\}$. Moreover, $C_m:=\bigoplus_{\substack{\ell \mid m\\ \ell \text{ is prime }}}C_{\ell}$ is stabilized by $\alpha$ if and only if each $C_{\ell}$ is stabilized by $\alpha$. This proves $$\nu(m,\alpha)=\prod_{\substack{\ell \mid m\\ \ell \text{ is prime }}}\nu(\ell,\alpha).$$

Finally, for a prime $\ell$, $\nu(\ell,\alpha)$ is precisely the number of $\alpha$-stable lines in $E[\ell]\cong \bZ/\ell\bZ \times \bZ/\ell\bZ$. Since $\alpha$ is trace-$0$, norm-$D$, its characteristic equation is $x^2=-D$. For $\ell\neq 2$, if $-D$ is a quadratic residue mod $\ell$, then $\alpha$ has two distinct eigenvalues in  $\bZ/\ell\bZ $, thus two eigenlines; if  $-D$ is not a quadratic residue mod $\ell$, then $\alpha$ does not have a stabilized line in $E[\ell]$. Thus $\nu(\ell,\alpha)=\left(1+\left(\frac{-D}{\ell}\right)\right)$ if $\ell$ is odd.

Now consider the case $\ell=2 \mid m$. Then $p\ne 2$ and $D$ is odd. The characteristic equation of $\alpha$ is equivalent to $x^2=1$ in $\bZ/2\bZ$, whose only root in $\bZ/2\bZ$ is $1$ with multiplicity $2$. Thus the eigenspace of $\alpha$ associated with the only eigenvalue $1$ is either the entire $E[2]$, or a line in $E[2]$. In the former case, $\alpha$ acts as the identity on $E[2]$ and stabilizes all three lines in $E[2]\cong \bZ/2\bZ\times \bZ/2\bZ$.
Thus $$\nu(2,\alpha)=\begin{cases}
    3\quad\text{ if }\alpha \text{ acts as identity on }E[2],\\
    1\quad\text{ otherwise.}
\end{cases}$$ 
\end{proof}

\begin{lemma} 
\label{lem:v-alpha-to-O} 
Following the notations and assumptions in Proposition~\ref{prop: drop-m-cyclic}, if $\cO_{\alpha'}=\cO_{\alpha}$, then $\nu(m,\alpha)=\nu(m,\alpha')$. In other words, $\nu(m,\cO_{\alpha}):=\nu(m,\alpha)$ is well-defined.
\end{lemma}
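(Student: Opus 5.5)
By the product formula in Proposition~\ref{prop: drop-m-cyclic}, it suffices to show that $\nu(\ell,\alpha)=\nu(\ell,\alpha')$ for each prime $\ell\mid m$. For odd $\ell$ this is immediate. The formula $\nu(\ell,\alpha)=1+\left(\frac{-D}{\ell}\right)$ depends only on $D$ and $\ell$, not on $\alpha$ at all. So the whole content of the lemma is the case $\ell=2\mid m$. There we must show that whether $\alpha$ acts as the identity on $E[2]$ is determined by the order $\cO_\alpha$.

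In that case $p\neq 2$ and $D$ is odd. Hence $\cO_\alpha$ is an order of $\bQ(\sqrt{-D})$ containing $\sqrt{-D}$, so by Lemma~\ref{lem:norm_D_elt} it is either $\bZ[\sqrt{-D}]$ or, when $D\equiv 3\pmod 4$, the maximal order $\bZ\left[\frac{1+\sqrt{-D}}{2}\right]$. I claim the following equivalence: $\alpha$ acts as the identity on $E[2]$ if and only if $\frac{1+\alpha}{2}\in R$, which happens if and only if $\cO_\alpha$ is the maximal order $\bZ\left[\frac{1+\sqrt{-D}}{2}\right]$.

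For the first equivalence, $\alpha$ acts trivially on $E[2]$ exactly when $E[2]\subseteq\ker(1+\alpha)$, since $-1=1$ on $E[2]$. Because $p\ne 2$, $E[2]$ is the kernel of the separable isogeny $[2]$. So $E[2]\subseteq\ker(1+\alpha)$ holds exactly when $1+\alpha$ factors through $[2]$, i.e.\ when $1+\alpha=2\beta$ for some $\beta\in R$ (see \cite[III.4.11]{Silverman}).

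For the second equivalence, note that $\frac{1+\alpha}{2}\in \bQ(\alpha)$. Membership in $\bQ(\alpha)\cap R$ is therefore the same as $\frac{1+\sqrt{-D}}{2}\in\cO_\alpha$, by the definition $\cO_\alpha=\iota_\alpha^{-1}(\bQ(\alpha)\cap R)$.

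Combining the two, $\nu(2,\alpha)$ is $3$ or $1$ according as $\cO_\alpha$ is maximal or not, which depends only on $\cO_\alpha$. Thus $\cO_\alpha=\cO_{\alpha'}$ gives $\nu(2,\alpha)=\nu(2,\alpha')$, and the lemma follows.

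The main obstacle is the factorization step: an endomorphism killing $E[2]$ factors as $[2]$ composed with an endomorphism. This needs the separability of $[2]$ in characteristic $p\ne 2$ and the universal property of quotients by finite subgroups. Once that is in place, the rest is bookkeeping with the two possible orders supplied by Lemma~\ref{lem:norm_D_elt}.
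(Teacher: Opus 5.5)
Your proposal is correct and follows essentially the same route as the paper. You reduce to $\ell=2$, convert ``$\alpha$ acts as the identity on $E[2]$'' into divisibility by $[2]$ in $R$, and then into a membership condition in $\cO_\alpha$. You use $\frac{1+\alpha}{2}$ where the paper uses $\frac{\alpha-1}{2}$, which is equivalent. Your further identification of this condition with maximality of $\cO_\alpha$ via Lemma~\ref{lem:norm_D_elt} is exactly what the paper records in Remark~\ref{rmk:v2-formula}.
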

\begin{proof}
    It suffices to show $\nu(2,\alpha)$ only depends on $\cO_{\alpha}$, since $\nu(\ell,\alpha)$ for odd $\ell$ only depends on $D$ and $\ell$. Note that 
    \begin{align*}
        \alpha\text{ acts as identity on }E[2] \iff E[2]\subseteq \ker(\alpha-1) \iff \alpha-1=\beta\circ [2] \text{ for some }\beta\in \End(E),
    \end{align*}
    where $[2]$ denotes the multiplication by $2$ map, 
    so we obtain
    $$\nu(2,\alpha)=\begin{cases}
    3\quad\text{ if }\frac{\alpha-1}{2}\in R ,\\
    1\quad\text{ otherwise.}
\end{cases}$$
Meanwhile, 
$$\frac{\alpha-1}{2}\in R\iff \frac{\alpha-1}{2}\in \bQ(\alpha)\cap R\iff \frac{\sqrt{-D}-1}{2}\in \cO_{\alpha},$$
proving $\nu(2,\alpha)$ only depends on $\cO_{\alpha}$, as desired.
\end{proof}

\begin{remark}
    \label{rmk:v2-formula}
    Note that $\frac{\sqrt{-D}-1}{2}\in \cO_{\alpha}$ if and only if $D\equiv 3 \pmod{4}$ and $\cO_{\alpha}$ is the ring of integers $\cO_K$ of $K=\bQ(\sqrt{-D})$. Therefore, the above proof further gives
    $$\nu(2,\cO)=\begin{cases}
    3\quad\text{ if } D\equiv 3\pmod{4} \text{ and }\cO=\cO_K,\\
    1\quad\text{ otherwise.}
\end{cases}$$
\end{remark}

Meanwhile, suppose $\alpha'=u\alpha u^{-1}$ for some $u\in R^{\times}$, then $\iota_{\alpha'}(x)=u\iota_{\alpha}(x)u^{-1}$ for any $x\in \mathbb{Q}(\sqrt{-D})$ and thus $\cO_{\alpha'}=\iota_{\alpha'}^{-1}(\mathbb{Q}(\alpha')\cap R)=\iota_{\alpha'}^{-1}(\mathbb{Q}(u\alpha u^{-1})\cap R)=\iota_{\alpha'}^{-1}(u(\mathbb{Q}(\alpha)\cap R)u^{-1})=\iota_{\alpha}^{-1}(\mathbb{Q}(\alpha)\cap R)=\cO_{\alpha}$. Therefore, the order $\cO_{\alpha}$ is well-defined on $R^{\times}$ conjugacy classes, and we shall denote it by $\cO_{[\alpha]}$ to indicate this.

\begin{definition}
    Let $\cO$ be an order in $\mathbb{Q}(\sqrt{-D})$ containing $\sqrt{-D}$. We say $\cO$ is an order \emph{corresponding to} the $w_D$-fixed supersingular point $[(E,C_M)]$ if $\cO=\cO_{[\alpha]}$ and $[(E,C_M)]=\Phi([(\alpha,C_m)])$ for some $C_m$.  
\end{definition}

\begin{lemma}
\label{lem:cO-welldefine}
Let $p\ge 5$ be a prime and let  $D\ge 5, m\ge 1$ be square-free integers such that $(D,m)=1$ and $p\nmid m$. Let $M=Dm$. Given a $w_D$-fixed supersingular point $[(E,C_M)]$ on $X_0(M)_{\mathbb{F}_p}$, the corresponding order $\cO$ in $\bQ(\sqrt{-D})$ is uniquely determined (and we shall say \emph{the} corresponding order) if one of the following conditions holds:

    \begin{enumerate}
        \item $p\nmid D$; or
        \item $j(E)\ne 1728$; or
        \item $D\equiv 1, 2 \pmod 4$; or
        \item $m\ge 3$.
    \end{enumerate}
\end{lemma}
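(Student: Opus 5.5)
By Propositions~\ref{prop:2-1-p-nmid-D} and \ref{prop:2-1-p-mid-D}, the fiber of $\Phi$ above $[(E,C_M)]$ consists of exactly two classes. In all cases except one (namely $p\mid D$, $j(E)=1728$, $\trd(g\alpha)=0$) these two classes are $[(\alpha,C_m)]$ and $[(-\alpha,C_m)]$. Since $\cO_{[\alpha]}$ is well defined on $R^\times$-conjugacy classes, uniqueness of the corresponding order comes down to two checks. First, $\cO_{-\alpha}=\cO_\alpha$. Second, in the exceptional case, $\cO_{g\alpha}=\cO_\alpha$ unless that case is excluded by one of hypotheses (1)--(4).

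For the first check, $\bQ(-\alpha)=\bQ(\alpha)$ as subalgebras, and $\iota_{-\alpha}=\iota_\alpha\circ\tau$, where $\tau$ is complex conjugation on $\bQ(\sqrt{-D})$. Hence $\cO_{-\alpha}=\tau(\cO_\alpha)=\cO_\alpha$, because orders in imaginary quadratic fields are stable under conjugation. So whenever the fiber is $\{[\alpha],[-\alpha]\}$ the order is unique. This covers case (1) by Proposition~\ref{prop:2-1-p-nmid-D}, and case (2) by Proposition~\ref{prop:2-1-p-mid-D}, since the exceptional case needs $j(E)=1728$.

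It remains to handle $p\mid D$, $j(E)=1728$, $\trd(g\alpha)=0$ under hypothesis (3) or (4). The plan is to show that in this situation either the configuration cannot occur, or $\cO_\alpha=\cO_{g\alpha}$ anyway.

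\emph{Case (4), $m\ge3$.} Here $m$ has an odd prime factor or $m$ is divisible by $2$ together with another prime. I would use the stabilized subgroup $C_m$: the element $g$ anticommutes with $\alpha$, since $g\alpha=\alpha g^{-1}=-\alpha g$. For an odd prime $\ell\mid m$, $\alpha$ acts on $C_\ell$ by an eigenvalue $\lambda$ with $\lambda^2=-D$. Because $(g\alpha)$ also stabilizes $C_\ell$ (this holds if the class $[(g\alpha,C_m)]$ lies in the fiber), $g$ must stabilize $C_\ell$ as well. Then $g$ maps the $\lambda$-eigenline to the $-\lambda$-eigenline, and these differ since $\ell\nmid D$; this is a contradiction. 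So the exceptional configuration is impossible whenever $m$ has an odd prime factor. If $m=2$ alone we have $m<3$, and that possibility is excluded by (4).

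\emph{Case (3), $D\equiv1,2\pmod4$.} Lemma~\ref{lem:norm_D_elt} forces any order containing $\sqrt{-D}$ to be the maximal order $\cO_K$. Hence $\cO_\alpha=\cO_{g\alpha}=\cO_K$ automatically.

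The main obstacle is case (4), specifically making precise that $g$ preserves $C_m$ when $[(\alpha,C_m)]$ and $[(g\alpha,C_m)]$ both lie in the fiber. I would derive this from the proof of Proposition~\ref{prop:2-1-p-nmid-D}: there, $\beta=\gamma\alpha$ is obtained with the same $C_m$ after normalization, so $g\alpha(C_m)=C_m$ and $\alpha(C_m)=C_m$, giving $g(C_m)=C_m$ because $\alpha$ is invertible on $C_m$ (as $(m,D)=1$). Then the eigenline argument above applies.
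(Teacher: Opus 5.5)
Your proposal is correct and follows the paper's proof essentially step for step. Like the paper, you use $\iota_{-\alpha}=\iota_\alpha\circ\rho$ to get $\cO_{[-\alpha]}=\cO_{[\alpha]}$, note that conditions (1) and (2) exclude the exceptional case outright, and invoke Lemma~\ref{lem:norm_D_elt} for (3). For (4), an odd prime $\ell\mid m$ gives a line in $E[\ell]$ stable under both $\alpha$ and $g$, which contradicts $g\alpha=-\alpha g$; your eigenline phrasing is the paper's computation $baP=-abP$, i.e.\ $2ab=0$.

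As your parenthetical already signals, what (4) really rules out is $g\alpha(C_m)=C_m$, i.e.\ $[(g\alpha,C_m)]$ lying in the fiber, so the fiber reduces to $[(\pm\alpha,C_m)]$. The configuration $j(E)=1728$, $\trd(g\alpha)=0$, $\alpha(C_m)=C_m$ can itself occur (e.g.\ $p=7$, $\alpha$ the Frobenius, $m=11$). That is the correct reading of ``the exceptional case is impossible,'' in both your write-up and the paper's.
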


\begin{proof}
    By Proposition~\ref{prop:2-1-p-nmid-D} and \ref{prop:2-1-p-mid-D}, we may assume that $[(E,C_M)]=\Phi[(\alpha,C_m)]$ for some $(\alpha,C_m)$ in the domain of $\Phi$; moreover, if $[(E,C_M)]=\Phi[(\alpha,C_m)]=\Phi([(\beta,C_m')]$, we have $[(\beta,C_m')]=[(\alpha,C_m)]$ or $[(-\alpha,C_m)]$ unless we are in the following exceptional case: $p\mid D$ and $j(E)=1728$ and $\trd(g\alpha)=0$. 
    
    In the non-exceptional cases, it suffices to show $\cO_{[-\alpha]}=\cO_{[\alpha]}$. Let $\rho$ be  the nontrivial automorphism of $\bQ(\sqrt{-D})$. Then $\iota_{-\alpha}=\iota_{\alpha}\circ \rho$. Therefore, $\cO_{[-\alpha]}=\iota_{-\alpha}^{-1}(\bQ(-\alpha)\cap \End(E))=\iota_{-\alpha}^{-1}(\bQ(\alpha)\cap \End(E))=\rho^{-1}\iota_{\alpha}^{-1}(\bQ(\alpha)\cap \End(E))=\rho^{-1}\cO_{[\alpha]}=\cO_{[\alpha]}$, proving the uniqueness of the order $\cO$ corresponding to $[(E,C_M)]$ when we are not in the exceptional case.

    It remains to show why each of the conditions prevents the exceptional case or otherwise guarantee uniqueness of the corresponding $\cO$. Having (1) or (2) clearly prevents the exceptional case. Condition (3) implies that the only order containing $\sqrt{-D}$ in $\bQ(\sqrt{-D})$ is the ring of integers by Lemma~\ref{lem:norm_D_elt}, guaranteeing uniqueness.

    Condition (4) also prevents the exceptional case. Recall that $[(E,C_M)]=\Phi[(\alpha,C_m)]$, and we assume to the contrary that we have $m\ge 3$, $p\mid D$, $j(E)=1728$, and $\trd(g\alpha)=0$. Then by Proposition~\ref{prop:2-1-p-mid-D}, we obtain $[(E,C_M)]=\Phi[(\alpha,C_m)]=\Phi[(g\alpha, C_m)]$. Since $[(\alpha,C_m)]$ and $[(g\alpha, C_m)]$ are in the domain of $\Phi$, $\alpha (C_m)= C_m$ and $g\alpha(C_m)=C_m$. 
     Since $m\ge 3$ is square-free, $m$ has a prime factor $\ell\ge 3$. By the proof of Proposition~\ref{prop: drop-m-cyclic}, there is a cyclic subgroup scheme $C_{\ell}$ of order $\ell$ in $E[\ell]$ such that $\alpha (C_\ell)=C_\ell$ and $g\alpha(C_{\ell})=C_{\ell}$. Therefore, $g(C_{\ell})=g(\alpha(C_\ell))=C_{\ell}$. Under an identification $E[\ell]=\bZ/\ell\bZ\times \bZ/\ell\bZ$, $C_{\ell}$ is a line in the two dimensional vector space over $\bZ/\ell\bZ$. Let $P$ be a generator of $C_{\ell}$, then $\alpha P=aP$ and $gP=bP$ for some $a,b\in \bZ/\ell\bZ^{\times}$. Recall that $\trd(g\alpha)=0$ implies $g\alpha=-\alpha g$. So $baP=g\alpha P=-\alpha g P=-abP$. Thus $2ab=0$, contradicting $a,b\in \bZ/\ell\bZ^{\times}$ and $\ell$ is an odd prime. Therefore, Condition (4) also prevents the exceptional case.
\end{proof}

Note that an embedding $\iota: \mathbb{Q}(\sqrt{-D})\hookrightarrow B_{p,\infty}$ with $ \iota(\sqrt{-D})\in R$ is purely determined by its restriction on $\mathbb{Z}(\sqrt{-D})$, which has image contained in $R$. Therefore, $\{\iota: \mathbb{Q}(\sqrt{-D})\hookrightarrow B_{p,\infty}\mid \iota(\sqrt{-D})\in R\}$ is in bijection with $\{\iota: \mathbb{Z}(\sqrt{-D})\hookrightarrow R\}$, which decomposes into the disjoint union
$$\bigsqcup_{\cO\supseteq \mathbb{Z}[\sqrt{-D}]}\{\text{Optimal embeddings } \iota:\cO\hookrightarrow R\}$$ by \cite[(30.3.5)]{Voight}.

Correspondingly, we have the disjoint decompositions $$\left\{\alpha \in R\ : \trd(\alpha)=0,\nrd(\alpha)=D\right\}=\bigsqcup_{\cO\supseteq \mathbb{Z}[\sqrt{-D}]}\{\alpha\in R \mid \trd(\alpha)=0,\nrd(\alpha)=D,\text{ and } \cO_{[\alpha]}=\cO\},$$ 

$$\left\{(\alpha, C_m):\begin{array}{c}
   \alpha\in R, \trd(\alpha)=0,\nrd(\alpha)=D,\\ C_m \text{ an order-}m \text{ cyclic subgroup }\\ \text{scheme of } E,
    \alpha(C_m)=C_m 
\end{array}\right\}=\bigsqcup_{\cO\supseteq \mathbb{Z}[\sqrt{-D}]}\left\{(\alpha, C_m):\begin{array}{c}
   \alpha\in R, \trd(\alpha)=0,\nrd(\alpha)=D,\\ \cO_{[\alpha]}=\cO, C_m \text{ an order-}m\\ \text{ cyclic subgroup }\text{scheme of } E,\\
    \alpha(C_m)=C_m 
\end{array}\right\},$$

and, for a supersingular $E$, under at least one of the four conditions of well-definedness of the corresponding order in Lemma~\ref{lem:cO-welldefine},

$$\left\{\begin{array}{c}
     w_D\text{-fixed points associated}\\\text{ with } E 
     \text{ on }X_0(M)_{\overline{\mathbb{F}}_p}
\end{array}  \right\}=\bigsqcup_{\cO\supseteq \mathbb{Z}[\sqrt{-D}]}\left\{\begin{array}{c}
     w_D\text{-fixed points associated with } E \\
     \text{on }X_0(M)_{\overline{\mathbb{F}}_p}\text{ whose corresponding order is }\cO
\end{array}  \right\}.$$

Moreover, the disjoint components respect $R^{\times}=\End(E)^{\times}$ actions, and the $R^{\times}$ classes of the respective disjoint components are preserved under the map $\Phi$ in Proposition~\ref{prop:2-1-p-nmid-D} and \ref{prop:2-1-p-mid-D}, the map $\Psi$ in Proposition~\ref{prop: drop-m-cyclic}, and the bijection in Lemma~\ref{lem:bij-cm}. Hence by analyzing the fibers of these maps, we arrive at the following lemma.

\begin{lemma}
\label{lem:many-2-final}
Let $p\ge 5$ be a prime and let  $D\ge 5, m\ge 1$ be square-free integers such that $(D,m)=1$ and $p\nmid m$. Let $M=Dm$. Let $E$ be a supersingular elliptic curve over $\overline{\mathbb{F}}_p$ and let $R=\End(E)$. Let $\cO$ be an order in $\mathbb{Q}(\sqrt{-D})$ containing $\sqrt{-D}$. Assume one of the four conditions in Lemma~\ref{lem:cO-welldefine}, so that we may refer to \emph{the} corresponding order of a fixed point. Then 
    \begin{align*}
        &\#\{w_D\text{-fixed points associated with the elliptic curve }E\text{ on }X_0(M)\text{ whose corresponding order is }\cO\}\\
    =&\frac{1}{2}\nu(m,\cO)\#\left(\{\text{Optimal embeddings } \iota:\cO\hookrightarrow R\}/R^{\times}\right).
    \end{align*}
\end{lemma}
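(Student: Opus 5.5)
We will prove the count by chaining the maps already constructed and restricting each to the $\cO$-component of the disjoint decompositions displayed just before the statement. Write $A_\cO$ for the set of $R^{\times}$-classes $[(\alpha,C_m)]$ in the domain of $\Phi$ with $\cO_{[\alpha]}=\cO$. Write $B_\cO=\{\alpha\in R:\trd(\alpha)=0,\nrd(\alpha)=D,\ \cO_{[\alpha]}=\cO\}/R^{\times}$. Write $F_\cO$ for the set of $w_D$-fixed points associated with $E$ whose corresponding order is $\cO$. The plan is to show $\#F_\cO=\tfrac12\#A_\cO$, $\#A_\cO=\nu(m,\cO)\#B_\cO$, and $\#B_\cO=\#(\{\text{optimal }\iota:\cO\hookrightarrow R\}/R^{\times})$, and then multiply.

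For the last equality, we apply Lemma~\ref{lem:bij-cm}. Its bijection $\alpha\mapsto\iota_\alpha$ is $R^{\times}$-equivariant. We check that $\cO_{[\alpha]}=\cO$ exactly when $\iota_\alpha|_{\cO}$ is an optimal embedding $\cO\hookrightarrow R$. This holds by the definition $\cO_\alpha=\iota_\alpha^{-1}(\bQ(\alpha)\cap R)$ together with the decomposition via \cite[(30.3.5)]{Voight}. Hence the bijection restricts to a bijection on $\cO$-components and descends to conjugacy classes.

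For the middle equality, we use Proposition~\ref{prop: drop-m-cyclic}. The map $\Psi$ preserves $\cO_{[\alpha]}$, so it sends $A_\cO$ onto $B_\cO$. Each fibre $\Psi^{-1}([\alpha])$ has exactly $\nu(m,\alpha)$ elements, and by Lemma~\ref{lem:v-alpha-to-O} this equals $\nu(m,\cO)$ whenever $\cO_{[\alpha]}=\cO$. Summing over the fibres gives $\#A_\cO=\nu(m,\cO)\#B_\cO$. If $\nu(m,\cO)=0$, both sides are $0$ and the argument is still consistent.

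The first equality is the main obstacle, because it requires $\Phi$ to be $2$-to-$1$ \emph{within} a single $\cO$-component. Propositions~\ref{prop:2-1-p-nmid-D} and \ref{prop:2-1-p-mid-D} show $\Phi$ is surjective and $2$-to-$1$ on the full domain, and the definition of $F_\cO$ gives $\Phi(A_\cO)\supseteq F_\cO$. Under the hypothesis of Lemma~\ref{lem:cO-welldefine}, each fixed point has a unique corresponding order. Therefore every preimage of a point of $F_\cO$ lies in $A_\cO$, and $\Phi(A_\cO)\subseteq F_\cO$ as well. Concretely, in the generic case the fibre is $\{[(\alpha,C_m)],[(-\alpha,C_m)]\}$, and $\cO_{[-\alpha]}=\cO_{[\alpha]}$ by the computation with $\rho$ in the proof of Lemma~\ref{lem:cO-welldefine}. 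In the exceptional case, the fibre is $\{[(\alpha,C_m)],[(g\alpha,C_m)]\}$ with $p\mid D$, $j(E)=1728$, $D\equiv 3\pmod 4$ and $m\le 2$. Here we invoke the uniqueness guaranteed by the assumed condition; since condition (3) or a different condition must hold, we would check that condition (3) forces $\cO_{[g\alpha]}=\cO_{[\alpha]}=\cO_K$. Once the fibres are known to stay in $A_\cO$, $\Phi|_{A_\cO}\colon A_\cO\to F_\cO$ is surjective with fibres of size exactly $2$, so $\#F_\cO=\tfrac12\#A_\cO$. Combining the three equalities gives the stated formula.
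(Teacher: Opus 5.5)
Your proposal is correct and follows essentially the same route as the paper, which likewise restricts $\Phi$, $\Psi$, and the bijection of Lemma~\ref{lem:bij-cm} to the $\cO$-components of the displayed disjoint decompositions and reads off the count from their fibre sizes $2$, $\nu(m,\cO)$, and $1$; your explicit observation that uniqueness of the corresponding order forces both $\Phi$-preimages of a point of $F_\cO$ into $A_\cO$ is exactly the step the paper leaves implicit. One small slip in your ``concretely'' aside: under the lemma's hypotheses the exceptional fibre $\{[(\alpha,C_m)],[(g\alpha,C_m)]\}$ can only arise when condition (3) holds, since conditions (1), (2), (4) each exclude it, so there $D\equiv 1,2\pmod 4$ rather than $D\equiv 3\pmod 4$ and $\cO=\cO_K$ is automatic --- but your general uniqueness argument already covers this case, so nothing breaks.
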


\begin{proposition}
\label{prop:count_fixing_order}
    Let $p\ge 5$ be a prime and let  $D\ge 5, m\ge 1$ be square-free integers such that $(D,m)=1$ and $p\nmid m$. Let $M=Dm$. Let $\cO$ be an order in $K=\mathbb{Q}(\sqrt{-D})$ containing $\sqrt{-D}$ and let $h(\cO)$ denote the class number of $\cO$. Assume one of the following conditions:
    \begin{enumerate}
        \item $p\nmid D$; or
        \item $p\equiv 1\pmod{4}$; or
        \item $D\equiv 1, 2 \pmod 4$; or
        \item $m\ge 3$.
    \end{enumerate}
    Then there are $$\frac{1}{2}h(\cO)\left(1-\left( \frac{-D}{p}\right)\right)\nu(m,\cO)$$ $w_D$-fixed supersingular points $[(E,C_{M})]$ in $X_0(M)_{\overline{\mathbb{F}}_p}$ such that $\cO$ is the order corresponding to $[(E, C_{M})]$.
\end{proposition}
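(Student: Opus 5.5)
The plan is to sum Lemma~\ref{lem:many-2-final} over all supersingular curves $E$ and then apply the Deuring/Eichler mass formula for optimal embeddings into maximal orders of $B_{p,\infty}$. First I check that each hypothesis of the proposition implies, for every supersingular $E$, one of the four conditions of Lemma~\ref{lem:cO-welldefine}. Conditions (1), (3), (4) match directly. For (2), $p\equiv 1\pmod 4$ means $j=1728$ is ordinary, since $E\colon y^2=x^3-x$ is supersingular iff $p\equiv 3\pmod 4$. So no supersingular $E$ has $j(E)=1728$, and condition (2) of Lemma~\ref{lem:cO-welldefine} holds. Hence, for every supersingular $E$, the count of $w_D$-fixed points on $E$ with corresponding order $\cO$ equals
\[
\frac12\,\nu(m,\cO)\,\#\bigl(\{\text{optimal embeddings }\cO\hookrightarrow \End(E)\}/\End(E)^{\times}\bigr).
\]

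Summing over isomorphism classes of supersingular $E$ over $\overline{\bF}_p$, the total count becomes $\frac12\nu(m,\cO)\sum_E \#\mathrm{Emb}(\cO,\End(E))/\End(E)^\times$. The factor $\nu(m,\cO)$ comes out of the sum because, by Lemma~\ref{lem:v-alpha-to-O} and Remark~\ref{rmk:v2-formula}, it depends only on $\cO$ and $D$. By Deuring, the supersingular $j$-invariants correspond to the left ideal classes of a fixed maximal order $\cO_0$ in $B_{p,\infty}$; the $\End(E)$ are the right orders of these ideal classes, and $\End(E)^\times$ is the unit group of the right order. I then invoke Eichler's trace formula in the form of \cite[Theorem 30.7.3]{Voight}:
\[
\sum_{[I]}\#\bigl(\mathrm{Emb}(\cO,\cO_R(I))/\cO_R(I)^\times\bigr)=h(\cO)\prod_{\ell\mid \mathrm{disc}(B)} m_\ell(\cO),
\]
where the sum runs over left ideal classes $[I]$ of $\cO_0$, $\cO_R(I)$ is the right order, and $\mathrm{disc}(B)=p$. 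The local factor for a maximal order at the ramified prime $p$ is $m_p(\cO)=1-\bigl(\frac{\cO}{p}\bigr)$, the Eichler symbol. A conductor prime to $p$ is needed for this to equal $1-\bigl(\frac{-D}{p}\bigr)$; if $p$ divides the conductor, the symbol is $1$, so $m_p=0$, which is consistent because then $p$ ``splits'' in the Eichler sense.

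The step I expect to need the most care is the bookkeeping between the ideal-class sum and the sum over isomorphism classes of curves. Two ideal classes with isomorphic right orders correspond to Galois-conjugate curves $E,E^{(p)}$, so I must check that the map from left ideal classes to supersingular $j$-invariants is a bijection (class number equals the number of supersingular $j$'s), not merely onto types. This holds by Deuring's correspondence \cite[Theorem 42.3.2]{Voight}, so the sum over $E$ equals the ideal-class sum term by term. I must also check the symbol at $p$ when $p\mid D$. Here the conductor of $\cO$ divides $2$, hence is prime to $p$ for $p\ge 5$, and $p$ ramifies in $K$. So $\bigl(\frac{-D}{p}\bigr)=0$ and $m_p=1$, matching the claimed formula with the Legendre/Kronecker symbol convention. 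When $p\nmid D$, the conductor ($1$ or $2$) is prime to $p$, and $m_p=1-\bigl(\frac{-D}{p}\bigr)$. Combining gives $\frac12 h(\cO)\bigl(1-\bigl(\frac{-D}{p}\bigr)\bigr)\nu(m,\cO)$.
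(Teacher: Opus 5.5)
Your proposal is correct and takes essentially the same route as the paper. Both arguments use $p\equiv 1\pmod 4$ to rule out a supersingular $j=1728$, then apply Lemma~\ref{lem:many-2-final} to each supersingular $E$. Both then sum over the left class set of a fixed maximal order via Deuring and apply Voight's Theorem~30.7.3 with local embedding number $1-\left(\frac{-D}{p}\right)$ at the ramified prime $p$. Beyond the paper, you spell out two points it leaves implicit: that $\nu(m,\cO)$ does not depend on $E$, and that the conductor of $\cO$ divides $2$ and so is prime to $p$. The paper, for its part, makes explicit the left--right switch via the canonical involution that you apply silently when stating the formula for left ideal classes and right orders.
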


\begin{proof}
 In this proof we will intensively use ideal classes and orders of quaternion algebras. While we cite the precise results we need, we refer the reader to Section 10.2 and Chapter 17 of \cite{Voight} for more detailed treatment of the topic. 

First note that the four conditions differ from those in Lemma~\ref{lem:cO-welldefine} only in (2). This is because we are no longer working with a fixed supersingular elliptic curve. By \cite[Chapter V Example 4.5]{Silverman}, $p\equiv 1\pmod{4}$ guarantees that an elliptic curve with $j$-invariant $1728$ is not a supersingular point. Hence Lemmas~\ref{lem:cO-welldefine} and \ref{lem:many-2-final} apply to any supersingular point $[(E,C_M)]$ relevant to this proposition.

    Let $E_0$ be a supersingular elliptic curve over $\overline{\mathbb{F}}_p$ and let $R_0=\End(E_0)$, a maximal order in $B_{p,\infty}$. By the Deuring correspondence \cite[Corollary 42.3.7]{Voight}, there is a bijection between isomorphism classes of supersingular elliptic curves over $\overline{\mathbb{F}}_p$ and the left class set    $\mathrm{Cls}_LR_0$, and if $E_{[I]}$ is the elliptic curve corresponding to $[I]$, then $\End(E_{[I]})\cong O_R(I)$, where $O_R(I)=\{\alpha\in B_{p,\infty}:I\alpha\subseteq I\}$ is the right order of $I$, and $\End(E_{[I]})^{\times}=\Aut(E_{[I]})\cong O_R(I)^{\times}$. 
    
    Let $n([I])$ denote the number of $w_D$-fixed points $[(E_{[I]},C_M)]$ corresponding to the order $\cO$. Let $\mu(\cO, O_R(I);O_R(I)^{\times})$ denote the number of $O_R(I)^{\times}$ conjugacy classes of optimal embeddings from $\cO$ to $O_R(I)$, or equivalently, the number of $\End(E_{[I]})^{\times}$ conjugacy classes of optimal embeddings from $\cO$ to $\End(E_{[I]})$. By Lemma~\ref{lem:many-2-final}, $n([I])=\frac{1}{2}\nu(m,\cO)\mu(\cO, O_R(I);O_R(I)^{\times})$.
    
    Therefore, the total number of $w_D$-fixed supersingular points in $X_0(M)_{\overline{\mathbb{F}}_p}$ corresponding to the order $\cO$ is given by

    \begin{equation}
        \sum_{[I]\in\mathrm{Cls}_LR_0} n([I]) =\frac{1}{2} \nu(m,\cO)\sum_{[I]\in\mathrm{Cls}_LR_0} \mu(\cO, O_R(I);O_R(I)^{\times}).
    \end{equation}

It now remains to show 
\begin{equation}
\label{eqn:optimal-embedding-count}
\sum_{[I]\in\mathrm{Cls}_LR_0} \mu(\cO, O_R(I);O_R(I)^{\times}) = h(\cO)\left(1-\left( \frac{-D}{p}\right)\right).
\end{equation}

By the left-right switch via the canonical involution \cite[Section 17.1, bottom of p. 277]{Voight} and \cite[Theorem 30.7.3]{Voight} applied to $R_0$, we have \begin{equation}
\sum_{[I]\in\mathrm{Cls}_LR_0} \mu(\cO, O_R(I);O_R(I)^{\times}) = h(\cO)\mu(\cO_p,R_{0,p},R_{0,p}^{\times}).
\end{equation} 

By \cite[Proposition 30.5.3(b)]{Voight}, we have $\mu(\cO_p,R_{0,p},R_{0,p}^{\times})=\left(1-\left( \frac{-D}{p}\right)\right)$, so we obtain \eqref{eqn:optimal-embedding-count} as desired.
\end{proof}

\begin{remark}
    Note again that we only need any one of the four conditions in Proposition~\ref{prop:count_fixing_order} for the count to hold. In particular, if we are in the $p\nmid D$ case, the first condition is satisfied and the count holds. It is only in the $p\mid D$ case that we need to look for one of (2), or (3), or (4) as a sufficient (not claiming to be necessary) condition for the count to hold.
\end{remark}

\begin{proposition}
\label{prop:fixed-point-formula}
    Let $p\ge 5$ be a prime and let  $D\ge 5, m\ge 1$ be square-free integers such that $(D,m)=1$. Let $F(D,m,p)$ be the number of $w_D$-fixed supersingular points on $X_0(Dm)_{\overline{\bF}_p}$. Then
    $$F(D,m,p)=\frac{v(D)}{2}\left(1-\left(\frac{-D}{p}\right)\right)\prod_{\substack{\ell \mid m\\ \ell \text{ an odd prime }\\\ell\neq p}}\left(1+\left(\frac{-D}{\ell}\right)\right),$$
    where $$v(D)=\begin{cases}
    h(-4D) &\text{ for } D\equiv 1,2 \pmod{4},\\
    h(-D)+h(-4D)&\text{ for } D\equiv 3 \pmod{4},
\end{cases}$$
if $D\equiv 1, 2\pmod{4}$ or $m$ is odd, and in the case where $p\mid D$, $D\equiv 3 \pmod{4}$, and $m=1$, we additionally assume $p\equiv 1\pmod 4$.
\end{proposition}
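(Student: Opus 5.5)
The plan is to sum the count of Proposition~\ref{prop:count_fixing_order} over all orders $\cO\subseteq \bQ(\sqrt{-D})$ containing $\sqrt{-D}$. First I remove any factor of $p$ from $m$. If $p\mid m$, write $m=pm'$; then $p\nmid Dm'$. Lemma~\ref{lem:M_vs_pM}, applied with $M=Dm'$, gives a bijection between supersingular points on $X_0(Dm')_{\overline{\bF}_p}$ and on $X_0(Dm)_{\overline{\bF}_p}$ that respects $w_D$-fixedness. Hence $F(D,m,p)=F(D,m',p)$. This is why the product in the statement omits $\ell=p$. The hypotheses are preserved: $m'$ is odd exactly when $m$ is, and if $p\mid D$ then $p\nmid m$ already, so nothing changes. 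So I may assume $p\nmid m$, which is the setting of Sections~\ref{sec:fixedpoints}'s lemmas.

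Next I check that one of the four conditions of Proposition~\ref{prop:count_fixing_order} holds. If $p\nmid D$, condition (1) holds; if $D\equiv 1,2\pmod 4$, condition (3) holds. Otherwise $p\mid D$ and $D\equiv 3\pmod 4$, and the hypothesis of the statement forces $m$ to be odd. Then either $m\ge 3$, giving condition (4), or $m=1$, where $p\equiv 1\pmod 4$ is assumed, giving condition (2). Under these conditions Lemma~\ref{lem:cO-welldefine} says every $w_D$-fixed supersingular point has a unique corresponding order. Together with the displayed disjoint decomposition, this gives
$$F(D,m,p)=\sum_{\cO\ni\sqrt{-D}}\frac12 h(\cO)\Bigl(1-\Bigl(\tfrac{-D}{p}\Bigr)\Bigr)\nu(m,\cO).$$
When $p\mid D$ the Legendre symbol is $0$ and the local optimal embedding count is $1$, consistent with Proposition~\ref{prop:count_fixing_order}.

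Then I evaluate $\nu(m,\cO)$ using Proposition~\ref{prop: drop-m-cyclic} and Remark~\ref{rmk:v2-formula}. For odd primes $\ell\mid m$, the factor $\nu(\ell,\cO)=1+\left(\frac{-D}{\ell}\right)$ does not depend on $\cO$. The factor $\nu(2,\cO)$ only arises when $m$ is even, which under our hypotheses forces $D\equiv 1,2\pmod 4$, and then $\nu(2,\cO)=1$. So $\nu(m,\cO)$ equals the product in the statement and factors out of the sum.

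Finally, Lemma~\ref{lem:norm_D_elt} lists the orders containing $\sqrt{-D}$. For $D\equiv 1,2\pmod 4$ the only one is $\cO_K=\bZ[\sqrt{-D}]$, of discriminant $-4D$, so the sum of class numbers is $h(-4D)$. For $D\equiv 3\pmod 4$ there are two, $\cO_K$ of discriminant $-D$ and $\bZ[\sqrt{-D}]$ of conductor $2$ and discriminant $-4D$, so the sum is $h(-D)+h(-4D)$. In both cases the sum of $h(\cO)$ is $v(D)$, which gives the formula.

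The main obstacle is the bookkeeping in the case analysis: checking that the hypotheses of the statement really place us under one of the four conditions, including after the reduction from $m$ to $m'$, and confirming that the disjoint decomposition by corresponding order is legitimate there. Once that is settled, the remaining steps are direct substitution.
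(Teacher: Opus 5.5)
Your proposal is correct and follows the paper's proof essentially step for step. The paper also splits into the cases $p\nmid Dm$, $p\mid m$ (reduced to the first case via Lemma~\ref{lem:M_vs_pM}), and $p\mid D$ (where conditions (3), (4), or (2) of Proposition~\ref{prop:count_fixing_order} are checked exactly as you do), then sums the per-order count, factors out $\nu(m,\cO)$ using Remark~\ref{rmk:v2-formula}, and identifies $\sum h(\cO)=v(D)$ via Lemma~\ref{lem:norm_D_elt}; the only cosmetic difference is that you perform the reduction $m\to m/p$ up front rather than as a separate case after the main computation.
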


\begin{proof}
Assume that $D\equiv 1, 2\pmod{4}$ or $m$ is odd. Since $(D,m)=1$, we have three cases: $p\nmid D$ and $p\nmid m$; $p\nmid D$ and $p\mid m$; $p\mid D$ and $p\nmid m$.

\underline{Case 1:} Assume $p\nmid D$ and $p\nmid m$. Then Condition (1) of Proposition~\ref{prop:count_fixing_order} is met and the result applies. We get 
\begin{equation}
\label{eqn:F-initial}
F(D,m,p)=\sum_{\cO\supseteq \mathbb{Z}[\sqrt{-D}]}\frac{1}{2}h(\cO)\left(1-\left( \frac{-D}{p}\right)\right)\nu(m,\cO).
\end{equation}
Moreover, by Lemma~\ref{lem:v-alpha-to-O}, Proposition~\ref{prop: drop-m-cyclic}, and Remark~\ref{rmk:v2-formula}, if $D\equiv 1, 2\pmod{4}$ or $m$ is odd, then the $\nu(2,\cO)$ term either equals $1$ or does not appear as a factor in the formula. Since $p$ is not a factor of $m$, we obtain $$\nu(m,\cO)=\prod_{\substack{\ell \mid m\\ \ell \text{ an odd prime }\\\ell\neq p}}\left(1+\left(\frac{-D}{\ell}\right)\right).$$  Combining with \eqref{eqn:F-initial}, we get $$F(D,m,p)=\frac{1}{2}\left(\sum_{\cO\supseteq \mathbb{Z}[\sqrt{-D}]}h(\cO)\right)\left(1-\left( \frac{-D}{p}\right)\right)\prod_{\substack{\ell \mid m\\ \ell \text{ an odd prime }\\\ell\neq p}}\left(1+\left(\frac{-D}{\ell}\right)\right),$$ which by Lemma~\ref{lem:norm_D_elt} is precisely the proposed formula.

\underline{Case 2:} Assume $p\nmid D$ and $p\mid m$. Then $m=pm'$ and $Dm=p(Dm')$ for some integer $m'\ge 1$. By Lemma~\ref{lem:M_vs_pM} and Case 1, we have $$F(D,m,p)=F(D,m',p)=\frac{v(D)}{2}\left(1-\left(\frac{-D}{p}\right)\right)\prod_{\substack{\ell \mid m'\\ \ell \text{ an odd prime }\\\ell\neq p}}\left(1+\left(\frac{-D}{\ell}\right)\right).$$ But the odd prime factors of $m'$ that are not equal to $p$ are exactly the same as the odd prime factors of $m$ not equal to $p$, so we again obtain the proposed formula.

\underline{Case 3:} Now assume $p\mid D$ and $p\nmid m$. Recall also that we are assuming $D\equiv 1, 2\pmod{4}$ or $m$ is odd. If $D\equiv 1, 2\pmod{4}$, then Condition (3) of Proposition~\ref{prop:count_fixing_order} is met. If $m$ is odd and $m\ne 1$, then we have Condition (4) of Proposition~\ref{prop:count_fixing_order}. If neither is true, we are in the case where $p\mid D$, $D\equiv 3 \pmod{4}$, and $m=1$, where we additionally assume $p\equiv 1\pmod{4}$, meeting Condition (2) of Proposition~\ref{prop:count_fixing_order}. In any case, we obtain the per-order count in Proposition~\ref{prop: drop-m-cyclic} and the claimed formula follows as in Case 1.
\end{proof}

\begin{remark}
\label{rmk:Ogg}
In \cite[Page 454]{Ogg}, the number of fixed points of the $w_{D}$ action on $X_0(Dm)$ when $(D,m)=1$ is given by (after changing $N'$ to $D$, $N''$ to $m$ from his notation) $$v(D)\prod_{\ell\mid m}\left(1+\left(\frac{-4D}{\ell}\right)\right),$$
which looks different from our formula
$$F(D,m,p)=\frac{v(D)}{2}\left(1-\left(\frac{-D}{p}\right)\right)\prod_{\substack{\ell \mid m\\ \ell \text{ an odd prime }\\\ell\neq p}}\left(1+\left(\frac{-D}{\ell}\right)\right),$$
in a few ways. We shall explain why there is no contradiction.

First, under our assumptions, the primes $\ell$ showing up in the Legendre symbols $\left(\frac{-D}{\ell}\right)$ are odd, so we do have $\left(\frac{-D}{\ell}\right)=\left(\frac{-4D}{\ell}\right)$. Then the only difference is the term $\frac{1}{2}\left(1-\left(\frac{-D}{p}\right)\right)$ in our formula, which replaces $\left(1+\left(\frac{-D}{p}\right)\right)$ in Ogg's formula if $p\mid m$ and replaces $1$ (not a factor) in Ogg's formula if $p\nmid m$.

This is because Ogg's setting was in characteristic zero, while we are counting \emph{supersingular} fixed points \emph{in characteristic} $p$. In our case, for a $w_D$-fixed supersingular point to exist on $X_0(Dm)_{\overline{\bF}_p}$, we need $\bQ(\sqrt{-D})\hookrightarrow B_{p,\infty}$, so we always need the term $\frac{1}{2}\left(1-\left(\frac{-D}{p}\right)\right)$ as an indicator. (This is more subtle in the case $p\mid D$ where we added assumptions to make sure the $2$-to-$1$ identifications hold and we are still left with an integer.) When $p\mid m$, the existence of $C_p$ in Ogg's case requires $-D$ to be a quadratic residue modulo $p$, in which case there are two $\sqrt{-D}$ eigenlines on $E[p]$; whereas in our case of a supersingular elliptic curve in characteristic $p$, we always have a unique $C_p$ arising as the kernel of the relative Frobenius, explaining the term $\left(1+\left(\frac{-D}{p}\right)\right)$ in Ogg's formula that we do not have.  
\end{remark}

\section{The quotient $X_0^+(p)$ of $X_0(p)$}
\label{sec:X0+p}
We start with the Deligne--Rapoport model of $X_0(p)$ over $\overline{\mathbb{F}}_p$, which is two irreducible components $X_0(1)$ intersecting at supersingular points. It is well-known that $w_p$ interchanges the two components.  [See Figure~\ref{fig:X0p} below for an illustration.]

\begin{figure}[h]
\includegraphics[scale=0.5]{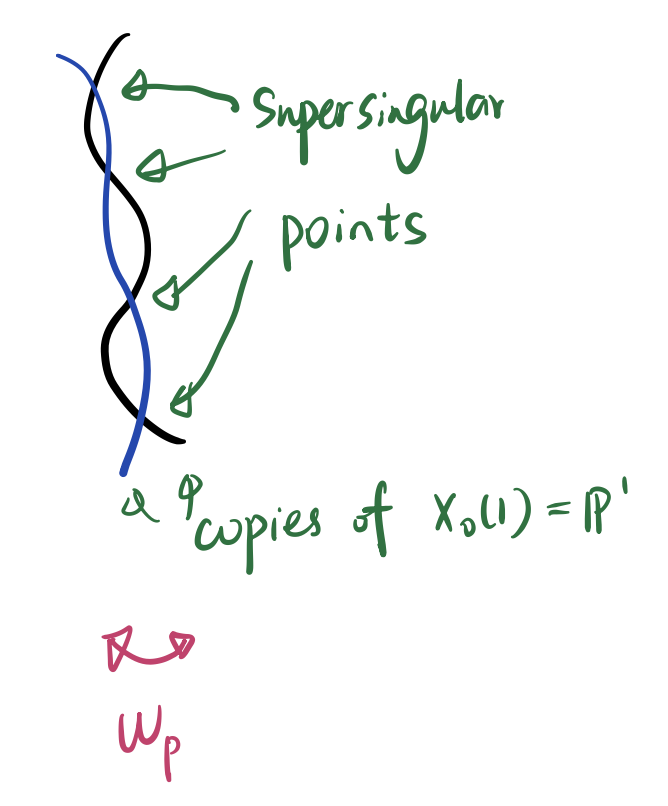}
\caption{$X_0(p)$ over $\overline{\mathbb{F}}_p$}
\label{fig:X0p}
\end{figure}

To see this picture and the $w_p$ action from the moduli perspective, recall that a geometric noncuspidal point on $X_0(p)$ corresponds to an isomorphism class of $p$-cyclic isogenies between elliptic curves and $w_p$ sends an isogeny to its dual. By \cite[Theorem 13.3.3]{Katz--Mazur} we have that for ordinary elliptic curves $E_1, E_0$, a $p$-isogeny 
\begin{equation*}
    E_0\xrightarrow{\pi_{0,1}} E_1
\end{equation*}
 factors as either Type-$(1,0)$,

\begin{equation}
\label{eqn:E_Frob}
E_0\xrightarrow{F_0}E_0^{(p)}\xrightarrow{\cong}E_1\xrightarrow{id}E_1
\end{equation}

\noindent or Type-$(0,1)$,

\begin{equation}
\label{eqn:E_Vers}
E_0\xrightarrow{id}E_0\xrightarrow{\cong}E_1^{(p)}\xrightarrow{V_1}E_1.
\end{equation}
Since such a $p$-isogeny is either type $(1,0)$ or type $(0,1)$, this implies that a $p$-isogeny between two ordinary elliptic curves is either equal to an isomorphism composed with the Frobenius or equal to the Verschiebung composed with an isomorphism. The two decomposition types correspond to the two irreducible components in the picture above. A $p$ isogeny between supersingular elliptic curves has both decompositions, and that is why the intersections are at supersingular points. When a cyclic isogeny has Type-$(0,1)$ (resp. (1,0)), its dual isogeny has Type-$(1,0)$ (resp. (0,1)). Therefore $w_p$ swaps the two irreducible components.

Next, we analyze how $w_p$ acts on the supersingular points. Recall that when an elliptic curve $E_0$ is supersingular we have $w_p([E_0])=[E_0^{(p)}]$. We also always have (regardless of supersingular or ordinary) $w_p([x: E_0\to E_1])=([\hat{x}:E_1\to E_0])$, where $\hat{x}$ denotes the dual isogeny of $x$.

If $x:E_0 \to E_1$ is an isogeny between two supersingular elliptic curves, then we have the following two cases.

\begin{enumerate}
    \item If $j(E_0)\in \mathbb{F}_p$, then we have that $\sigma_p(j(E_0))=j(E_0)$ where $\sigma_p:x\mapsto x^p$ fixes $\mathbb{F}_p$. Then $j(E_1)=j(E_0^{(p)})=\sigma_p(j(E_0))=j(E_0)$ and $E_1\cong E_0$. In this case, the point $[x:E_0\to E_1]$ is mapped to itself under $w_p$.

    \item If $j(E_0) \not\in \mathbb{F}_p$, then it is in $\mathbb{F}_{p^2}$ (\cite{Deuring1941}, as cited in \cite[V.3.1]{Silverman}, and in this case we still have $E_1\cong E_0^{(p)}$ but this time in particular $E_0^{(p)}$ not isomorphic to $E_0$ and $w_p$ takes $[x:E_0 \to E_1]$ to another point $[\hat{x}:E_1 \to E_0]$.
\end{enumerate}

\begin{remark}
\label{rmk:Fp2-Fp}
    The following is another way to think of the two cases. We know that we always have $w_p([x:E_0\to E_1])=[\hat{x}:E_1\to E_0]$ and $w_p([\hat{x}:E_1\to E_0])=[x:E_0\to E_1]$, so forgetting the level structure we have $w_p([E_0])=[E_1]$ and $w_p([E_1])=[E_0]$. When $E_0,E_1$ are supersingular, we further have $E_0^{(p)}\cong E_1$, so $[E_1]=[E_0^{(p)}]$. We obtain $w_p([E_0])=[E_0^{(p)}]$ and $w_p([E_0^{(p)}])=[E_0]$. Let $\sigma_p:\overline{\bF}_p\to \overline{\bF}_p, \sigma\mapsto\sigma^p$. Then $j([E_1])=j(w_p([E_0]))=j([E_0^{(p)}])=\sigma_p j([E_0])$ and similarly $j([E_0])=j(w_p([E_1]))=\sigma_p j([E_1])=\sigma_p^2 j([E_0])$. Therefore, $j(E_0)$ is either fixed by $\sigma_p$, or not by $\sigma_p$ but by $\sigma_p^2$. In the former case, $j(E_0)=j(E_1)\in\bF_p$; in the latter case, $j(E_0)\ne j(E_1)\in \bF_{p^2}\backslash\bF_{p}$ and $j(E_0)$ and $j(E_1)$ are Galois conjugates of each other.
\end{remark}

\begin{theorem}
\label{thm:X0pmodwp}  The following are equal.
\begin{enumerate}
    \item[(a)] The genus of $X_0^+(p)/\mathbb{Q}$. 
    
    \item[(b)] The number of singular points on $X_0^+(p)_{\overline{\bF}_p}$ (model obtained by quotienting the Deligne--Rapoport model by $w_p$).
    
    \item[(c)] Half of the number of supersingular points on $X_0(p)$ over $\overline{\mathbb{F}}_p$ \textbf{not} fixed by $w_p$. 

    \item[(d)] Half of the number of supersingular $j$ invariants (all in $\mathbb{F}_{p^2}$) that are not in $\mathbb{F}_p$.
\end{enumerate}

When $p\ge 5$, it is also known that the total number $S_{p^2}$ of supersingular $j$-invariants (in $\mathbb{F}_{p^2}$) is given by 

\begin{equation*}
  S_{p^2}=  \left\lfloor\frac{p}{12}\right\rfloor+\begin{cases}
        0\quad\text{ if } p\equiv 1 \mod {12}\\
        1\quad\text{ if } p\equiv 5,7 \mod {12}\\
        2\quad\text{ if } p\equiv 11 \mod {12}
    \end{cases}
\end{equation*}
(\cite[Theorem V.4.1(c)]{Silverman}) and that the total number $S_p$ of supersingular $j$-invariants in $\mathbb{F}_p$ is given by
\begin{equation*}
   S_p= \begin{cases}
        \frac{1}{2}h(-4p)\quad\text{ if } p\equiv 1 \mod 4\\
        h(-p)\quad\text{ if } p\equiv 7 \mod 8\\
        2h(-p)\quad\text{ if } p\equiv 3 \mod 8,
    \end{cases}
\end{equation*}
 (\cite[Theorem 14.18]{Cox89}, as cited in \cite[p.2]{Delfs--Galbraith}) where $h(d)$ denotes the class number of the imaginary quadratic order of discriminant $d$. In particular, Item~(d) above is explicit when $p\ge 5$.
\end{theorem}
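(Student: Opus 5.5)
The plan is to prove the chain of equalities (a)$=$(b)$=$(c)$=$(d) by analyzing the special fiber of the quotient model and then applying Proposition~\ref{prop:genus-dual-graph}.

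First, the geometry of the quotient. The Deligne--Rapoport special fiber $X_0(p)_{\overline{\bF}_p}$ consists of two copies of $X_0(1)_{\overline{\bF}_p}\cong\mathbb{P}^1$, crossing transversally at the supersingular points, and $w_p$ swaps the two components. Hence $X_0^+(p)_{\overline{\bF}_p}$ has a single irreducible component, isomorphic to $X_0(1)_{\overline{\bF}_p}$, which has genus $0$.

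Next, the singular points. At a supersingular point $x$ there are two cases.
\begin{itemize}
\item If $w_p(x)\neq x$, then $x$ has trivial stabilizer in $\langle w_p\rangle$. By Proposition~\ref{prop:quot_sing}(2) its image is an ordinary double point, and the two points $x, w_p(x)$ map to the same node.
\item If $w_p(x)=x$, then the stabilizer is nontrivial. Since $w_p$ swaps the two components, and the two local branches at $x$ lie on different components, the action is branch-swapping. By Proposition~\ref{prop:quot_sing}(3) the image is a smooth point.
\end{itemize}
Proposition~\ref{prop:quot_sing}(1) handles all remaining points, which stay smooth. So the number of nodes is half the number of non-fixed supersingular points, giving (b)$=$(c).

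Then the genus. Proposition~\ref{prop:genus-dual-graph}, applied to a genus-$0$ curve with $n$ self-crossings, gives $p_a=n$. By Remark~\ref{rmk:arith-geom-genus}, the genus of the generic fiber equals this arithmetic genus. The generic fiber over $W(\overline{\bF}_p)$ is $X_0^+(p)$, and genus is invariant under base change from $\bQ$, so (a)$=$(b). One point needs care here: the quotient of a proper flat model is proper and flat with generic fiber $X_0(p)/w_p$, which follows since quotients commute with flat base change to the generic fiber.

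Finally, (c)$=$(d). A supersingular point of $X_0(p)_{\overline{\bF}_p}$ is $[E_0\to E_0^{(p)}]$ (Frobenius), so supersingular points are in bijection with supersingular $j$-invariants. By the case analysis preceding Remark~\ref{rmk:Fp2-Fp}, $w_p$ fixes the point exactly when $j(E_0)\in\bF_p$. For the bijection one must check that the point on $X_0(p)$ is determined by $j(E_0)$. This holds because the unique order-$p$ subgroup scheme of a supersingular curve is the Frobenius kernel. The explicit formulas for $S_{p^2}$ and $S_p$ are quoted from the cited references and need no proof.

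The main obstacle I anticipate is justifying the branch-swapping claim at fixed points rigorously, that is, identifying the minimal primes $(\tilde u),(\tilde v)$ of the completed local ring with the two global components. This holds because each branch at a node is the germ of exactly one of the two components, and $w_p$ maps the components to each other.
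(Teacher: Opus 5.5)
Your proposal is correct and follows essentially the same route as the paper. Proposition~\ref{prop:quot_sing} shows the nodes of the quotient come in one-to-one correspondence with $w_p$-swapped pairs of supersingular points, and fixed points become smooth because $w_p$ swaps the two components. Proposition~\ref{prop:genus-dual-graph} together with Remark~\ref{rmk:arith-geom-genus} gives (a)$=$(b), and the relation $w_p[E_0]=[E_0^{(p)}]$ gives (c)$=$(d). Your additional justifications make explicit what the paper leaves implicit: the branch-swapping at fixed points, the compatibility of the quotient with passage to the generic fiber, and the bijection with $j$-invariants via the Frobenius kernel (the paper cites Lemma~\ref{lem:M_vs_pM} for this last point).
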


\begin{remark}
    In \cite[p.2]{Delfs--Galbraith}, the authors introduced $h(d)$ as the class number of the imaginary quadratic field $\mathbb{Q}(\sqrt{d})$, which agrees with the class number of the imaginary quadratic order of discriminant $d$ when $d$ is a fundamental discriminant. All of the inputs of $h$ above are fundamental discriminants, so the two notions agree here. 
    
    Here and throughout, we use $h(d)$ to denote the class number of the imaginary quadratic order of discriminant $d$, and later on we apply this notion to non-fundamental discriminants as well.
\end{remark}

\begin{proof}
We first show $(a)=(b)$. Let $r$ denote the number of singular points on $X_0^+(p)_{\overline{\bF}_p}$.  Since the special fiber of $X_0^+(p)_{\overline{\bF}_p}$ is $\mathbb{P}^1$ intersecting itself at $r$ nodal singularities, we obtain the arithmetic genus of $X_0^+(p)_{\overline{\bF}_p}$ is $g(\mathbb{P}^1)+r=r$ by Proposition~\ref{prop:genus-dual-graph}. By Remark~\ref{rmk:arith-geom-genus}, we obtain the arithmetic genus of $X_0^+(p)/\mathbb{Q}$ is $r$, and since $X_0^+(p)/\mathbb{Q}$ is smooth, its geometric genus (or now unambiguously, genus) is also $r$.

    Next, we show $(b)=(c)$. From Proposition~\ref{prop:quot_sing}, the only singularities on the quotient of $X_0(p)_{\overline{\bF}_p}$ by $w_p$ are obtained from singular points on $X_0(p)_{\overline{\bF}_p}$ not fixed by $w_p$, and each pair $(x,w_px)$ results in one nodal singularity on the quotient. Since the singular points on $X_0(p)_{\overline{\bF}_p}$ are precisely the supersingular points, we obtain the result.

    Finally, we show $(c)=(d)$. First note from the Deligne--Rapoport model and Lemma~\ref{lem:M_vs_pM} that the number of supersingular points on $X_0(p)$ over $\overline{\mathbb{F}}_p$ is the same as the number of supersingular points of $X_0(1)_{\overline{\bF}_p}$, which corresponds to supersingular elliptic curves without level structure. Therefore, we can simply count the supersingular $j$-invariants. By the analysis in Remark 4.1 and above it, we demostrated that the supersingular points not fixed by $w_p$ are precisely those with $j$ invariant in $\mathbb{F}_{p^2}\backslash \mathbb{F}_p$. This proves $(c)=(d)$. 
\end{proof}
\begin{figure}[h]
    \includegraphics[scale=0.18]{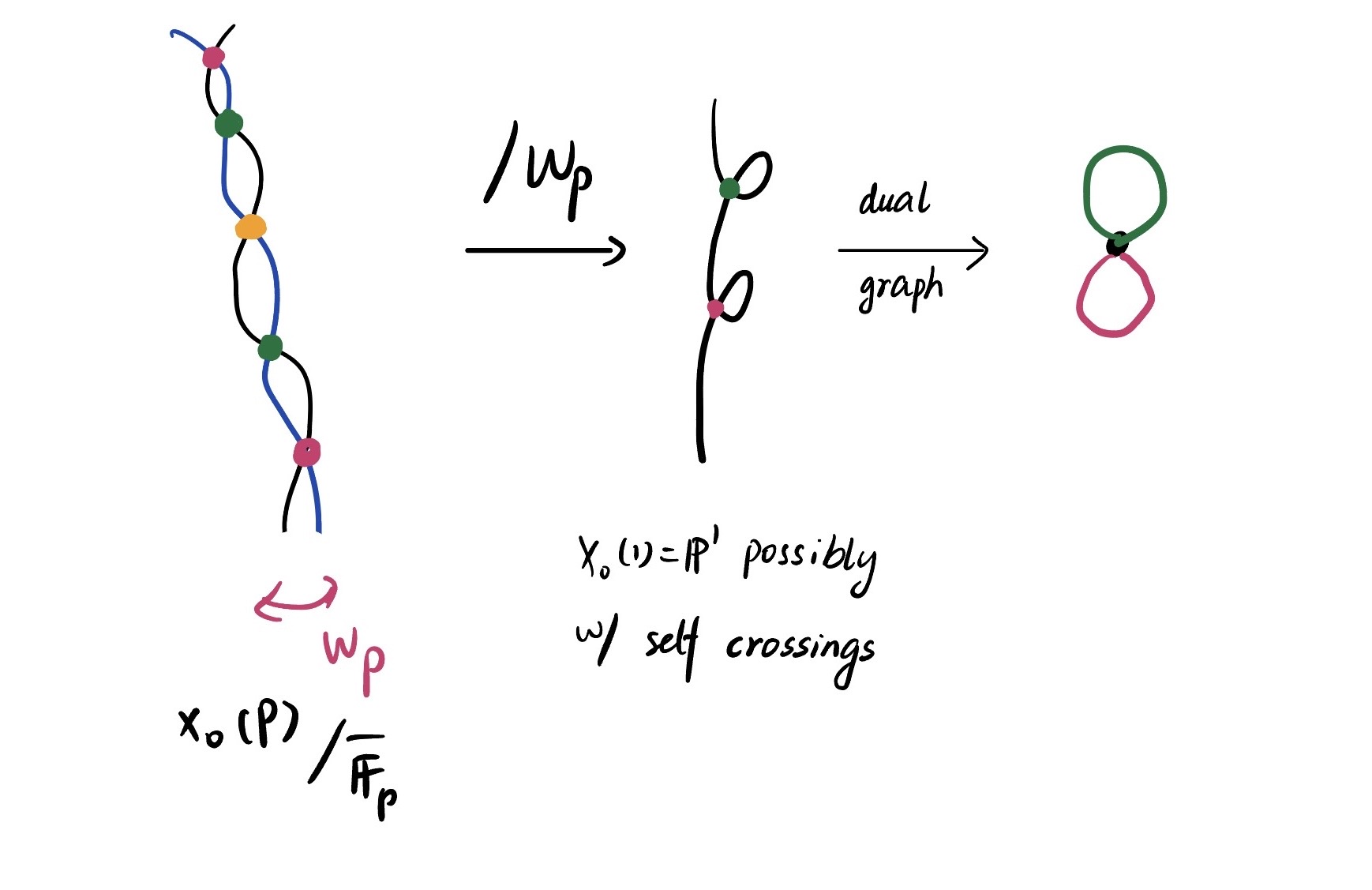}
    \caption{Illustration of the quotient}
\end{figure}

\section{Quotients of $X_0(pq)$}
\label{sec:X0*pq}
Similar to the $X_0(p)$ case, the Deligne--Rapoport model of $X_0(pq)$ over $\overline{\mathbb{F}}_p$ is two irreducible components $X_0(q)$ intersecting at supersingular points. In this case, $w_p$ also interchanges the two components. [See Figure~\ref{fig:X0pq} below for an illustration.] To see the moduli interpretation of the two irreducible components in this case, note that any $pq$-cyclic isogeny $E_0\xrightarrow{\pi_{0,1}} E_1$ factors either as 

\begin{equation}
\label{eqn:E_Frob-qlevel}
E_0\xrightarrow{F_0}E_0^{(p)}\xrightarrow{\cong}E_1\xrightarrow{\varphi}E_1
\end{equation}
for some cyclic $q$-isogeny $\varphi$, or 

\begin{equation}
\label{eqn:E_Versqlevel}
E_0\xrightarrow{\psi}E_0\xrightarrow{\cong}E_1^{(p)}\xrightarrow{V_1}E_1
\end{equation}
for some cyclic $q$-isogeny $\psi$. This is similar to what is in \eqref{eqn:E_Frob} and \eqref{eqn:E_Vers}, replacing the identity map with a cyclic $q$-isogeny.

\begin{figure}[h]
\includegraphics[scale=0.5]{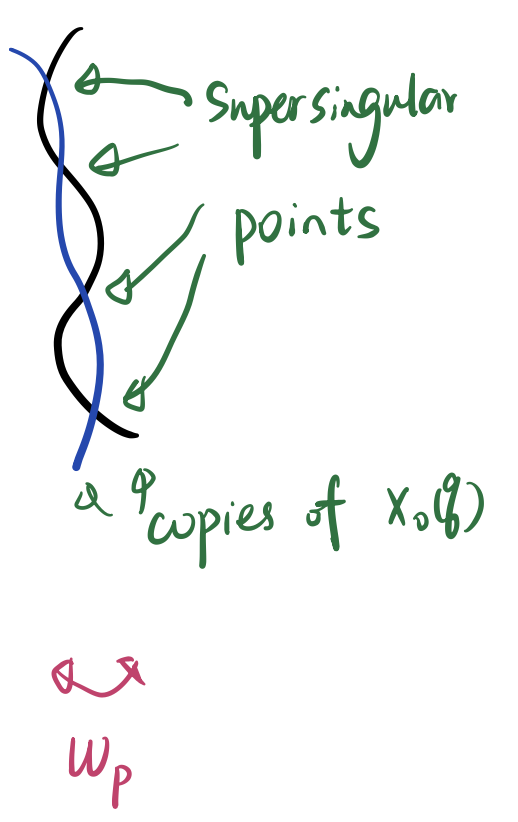}
\caption{$X_0(pq)$ over $\overline{\mathbb{F}}_p$}
\label{fig:X0pq}
\end{figure}

We study the quotient of this model by $\langle w_p,w_q\rangle$, and we obtain this quotient in two ways, first quotienting by $w_p$ then by $w_q$, or first by $w_q$ then by $w_p$, as illustrated in Figure~\ref{fig:two-routes}.

\begin{figure}
\includegraphics[scale=0.4]{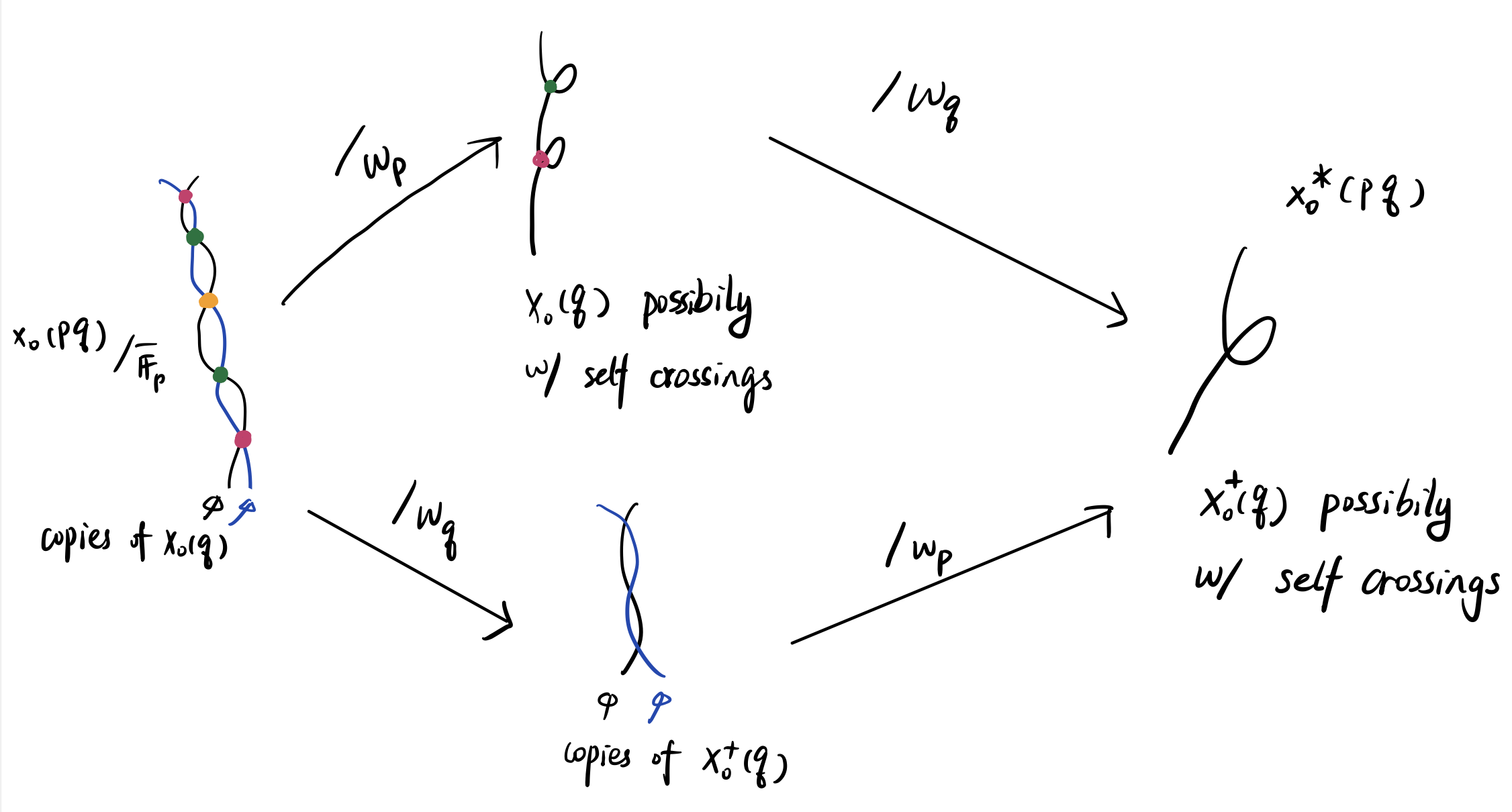}
\caption{Two routes}
\label{fig:two-routes}
\end{figure}

\subsection{First layer quotients}
We first focus on the step from $X_0(pq)$ to $X_0(pq)/w_p$. Recall from Theorem~\ref{thm:X0pmodwp} that $S_{p^2}$ denotes the number of supersingular $j$-invariants over $\overline{\mathbb{F}}_p$ (all of which are in $\mathbb{F}_{p^2}$), $S_{p}$ denotes the number of supersingular $j$-invariants in $\mathbb{F}_p$, and there are explicit formulae for $S_{p^2}$ and $S_p$ when $p>3$. Let $$SS_{p,q}:=\left\{(E,C_q) :\begin{array}{c} E \text{ a supersingular elliptic curve over } \overline{\mathbb{F}}_p,\\ \text{ and } C_q \text{ a cyclic subgroup of order }q \text{ in } E\end{array}\right\}/{\cong},$$ which is the set of supersingular points on $X_0(q)_{\overline{\bF}_p}$, in bijection (via $[(E,C_q)]\mapsto [(E,C_q+C_p)]$) with the set of supersingular points on $X_0(pq)_{\overline{\bF}_p}$. (c.f. Lemma~\ref{lem:M_vs_pM}.)

\begin{proposition}
\label{prop:X0pq}
Let $p$ be a prime number. The genus of $X_0(pq)$ is $2g\left(X_0(q)\right)+(\#SS_{p,q})-1$. Moreover, when $p\equiv 1 \pmod{12}$, $\#SS_{p,q}=(q+1)\lfloor\frac{p}{12}\rfloor$.
\end{proposition}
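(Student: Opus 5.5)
The genus formula follows directly from the Deligne--Rapoport description. The special fiber $X_0(pq)_{\overline{\bF}_p}$ consists of two copies of the smooth curve $X_0(q)_{\overline{\bF}_p}$. They meet transversally exactly at the supersingular points. By Lemma~\ref{lem:M_vs_pM} with $D=1$, $m=q$, these points are in bijection with $SS_{p,q}$ via $[(E,C_q)]\mapsto[(E,C_q+C_p)]$. Applying the two-component case of Proposition~\ref{prop:genus-dual-graph} gives arithmetic genus $2g(X_0(q)_{\overline{\bF}_p})+\#SS_{p,q}-1$. By Remark~\ref{rmk:arith-geom-genus} this equals $g(X_0(pq))$. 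Since $q\neq p$, $X_0(q)$ has good reduction at $p$, so $g(X_0(q)_{\overline{\bF}_p})=g(X_0(q))$.

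For the count when $p\equiv 1\pmod{12}$, I would fibre $SS_{p,q}$ over the set of supersingular $j$-invariants. Fix a supersingular $E$; the points of $SS_{p,q}$ lying over it are the $\Aut(E)$-orbits on the $q+1$ cyclic subgroups of order $q$ in $E[q]\cong(\bZ/q\bZ)^2$. Here $q\neq p$, so $E[q]$ is étale of this shape. Since $p\equiv 1\pmod 4$ and $p\equiv 1\pmod 3$, neither $j=1728$ nor $j=0$ is supersingular (cf.\ \cite[Chapter V Example 4.5]{Silverman}). Hence $\Aut(E)=\{\pm1\}$ for every supersingular $E$. Because $-1$ fixes every subgroup, each supersingular $j$-invariant contributes exactly $q+1$ points.

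Finally, by Theorem~\ref{thm:X0pmodwp}, $S_{p^2}=\lfloor p/12\rfloor$ when $p\equiv1\pmod{12}$. This gives $\#SS_{p,q}=(q+1)\lfloor p/12\rfloor$. The only step needing care is the automorphism-orbit count, which is exactly where the congruence hypothesis is used. Without it, $\Aut(E)$ of order $4$ or $6$ could identify subgroups. The statement implicitly requires $q\neq p$ prime, and I would state that assumption.
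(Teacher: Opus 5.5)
Your proposal is correct and follows the paper's proof. The genus formula comes from the two-component Deligne--Rapoport special fiber via Proposition~\ref{prop:genus-dual-graph} and Remark~\ref{rmk:arith-geom-genus}, and the count is $q+1$ subgroups per supersingular $j$-invariant times $S_{p^2}=\lfloor p/12\rfloor$. You are slightly more careful than the paper: you explicitly justify $\Aut(E)=\{\pm1\}$, which the paper only explains afterward in Remark~\ref{rmk:1mod12}, and you flag the implicit hypothesis that $q\neq p$ is prime.
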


\begin{proof}
Recall that $X_0(pq)_{\overline{\bF}_p}$ consists of two irreducible components $X_0(q)_{\overline{\bF}_p}$ intersecting at $\#SS_{p,q}$ nodal crossings, by Proposition~\ref{prop:genus-dual-graph} and Remark~\ref{rmk:arith-geom-genus}, we obtain $g\left(X_0(pq)\right)=2g\left(X_0(q)\right)+(\#SS_{p,q})-1$.

Now assume $p\equiv 1 \pmod {12}$. For each isomorphism class of supersingular elliptic curve $E$, there are $q+1$ non-isomorphic cyclic subgroups of order $q$. Meanwhile, $S_{p^2}=\lfloor\frac{p}{12}\rfloor$ when $p\equiv 1 \pmod{12}$ (c.f. Theorem~\ref{thm:X0pmodwp}). So $\#SS_{p,q}=(q+1)\lfloor\frac{p}{12}\rfloor$ when $p\equiv 1 \pmod{12}$.
\end{proof}
\begin{remark}\label{rmk:1mod12}
    The condition $p\equiv 1\pmod{12}$ is to avoid elliptic curves with extra automorphisms (those with $j=0,1728$). In general, we can still decompose $SS_{p,q}$ by the isomorphism classes $[E]$ and count the number of cyclic subgroups of $E[q]$ not identified by $\Aut(E)$, but the $\Aut(E)$ orbits need not be singletons. In the case where  $j=1728$, Arpin in \cite[Proposition 3.6]{Arpin} summarizes $\Aut(E)$ actions on the order-$q$ cyclic subgroups, noting that a similar procedure works for $j=0$ and that the condition on $q$ being prime is only for simplicity. In these cases, one should simply replace the factor $(q+1)$ with the number of $\Aut(E)$-orbits of order-$q$ cyclic subgroups.
\end{remark}

\begin{proposition}
\label{prop:X0pqmodwp}
    Let $p$ be a prime number such that $p\equiv 1 \pmod{12}$. The model of $X_0(pq)/w_p$ over $\overline{\bF}_p$ obtained from taking the quotient of the Deligne--Rapoport model of $X_0(pq)_{\overline{\bF}_p}$ by $w_p$ is given by $X_0(q)_{\overline{\bF}_p}$ with $\frac{1}{2}S_{p^2}(q+1)-\frac{1}{2}eS_p$ self-crossings that are ordinary double points, where $S_{p^2}=\lfloor\frac{p}{12}\rfloor$, $S_p=\frac{h(-4p)}{2}$ (c.f. Theorem~\ref{thm:X0pmodwp}), 
    and $e$ is the number of eigenlines of the Frobenius endomorphism on a supersingular elliptic curve $E/\bF_{p}$ acting on $E[q]$, and is given by $e=\left(1+\left(\frac{-p}{q}\right)\right)$ if $q$ is odd.
    
    Consequently, the genus of $X_0(pq)/w_p$ is equal to 

$$g(X_0(q))+\frac{1}{2}S_{p^2}(q+1)-\frac{1}{2}eS_p.$$
\end{proposition}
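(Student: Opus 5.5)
The plan is to copy the proof of Theorem~\ref{thm:X0pmodwp}, now carrying the level-$q$ structure along.

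\emph{Step 1: the components.} The special fiber $X_0(pq)_{\overline{\bF}_p}$ is two copies of $X_0(q)_{\overline{\bF}_p}$, described by the factorizations \eqref{eqn:E_Frob-qlevel} and \eqref{eqn:E_Versqlevel}. They meet transversally at the $\#SS_{p,q}$ supersingular points. Dualizing swaps the Frobenius-type and Verschiebung-type factorizations, so $w_p$ interchanges the two components. Hence the quotient is a single irreducible component isomorphic to $X_0(q)_{\overline{\bF}_p}$.

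\emph{Step 2: the singularities.} Since $w_p$ interchanges the components, it swaps the two local branches at every supersingular point it fixes. By Proposition~\ref{prop:quot_sing}(3), each fixed point becomes a smooth point of the quotient. By Proposition~\ref{prop:quot_sing}(2), each orbit $\{x,w_px\}$ of size two becomes exactly one ordinary double point, which is a self-crossing of the single component. So the number of self-crossings is
\[
\tfrac12\bigl(\#SS_{p,q}-\#\{w_p\text{-fixed supersingular points}\}\bigr).
\]
By Proposition~\ref{prop:X0pq}, $\#SS_{p,q}=(q+1)S_{p^2}$.

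\emph{Step 3: counting fixed points.} This is the main step. Under the bijection of Lemma~\ref{lem:M_vs_pM}, $w_p$ sends $[(E,C_q+C_p)]$ to $[(E^{(p)},F(C_q)+C_p')]$, where $F\colon E\to E^{(p)}$ is the $p$-power Frobenius. Remark~\ref{rmk:Fp2-Fp} shows that a fixed point requires $E\cong E^{(p)}$, so $j(E)\in\bF_p$.

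Because $p\equiv 1\pmod{12}$, the curve $E$ has $\Aut(E)=\{\pm1\}$. Choose a model of $E$ over $\bF_p$. Then $E^{(p)}=E$ and $F$ becomes the Frobenius endomorphism $\pi$, which satisfies $\pi^2=-p$; it has trace $0$ because $E$ is supersingular and $p\ge5$. The point is fixed if and only if some automorphism $\pm1$ carries $\pi(C_q)$ to $C_q$, that is, if and only if $\pi(C_q)=C_q$. So the fixed points over $E$ are exactly the $\pi$-stable lines in $E[q]$.

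The characteristic polynomial of $\pi$ on $E[q]$ is $x^2+p$. For odd $q$ the number of eigenlines is therefore $e=1+\left(\frac{-p}{q}\right)$, by the argument in Proposition~\ref{prop: drop-m-cyclic}. This count does not depend on the chosen $\bF_p$-model: the two models are twists by $\pm1$, which replaces $\pi$ by $-\pi$ and leaves the stable lines unchanged. Summing over the $S_p$ classes with $j\in\bF_p$ gives $eS_p$ fixed points. The formula $S_p=h(-4p)/2$ for $p\equiv1\pmod4$ is taken from Theorem~\ref{thm:X0pmodwp}.

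Alternatively, this count is the case $F(p,q,p)$ of Proposition~\ref{prop:fixed-point-formula}, and could be cross-checked against it.

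\emph{Step 4: the genus.} Combining Steps 2 and 3 gives $\tfrac12(q+1)S_{p^2}-\tfrac12 eS_p$ self-crossings. Proposition~\ref{prop:genus-dual-graph} (one smooth component with $n$ self-crossings has $p_a=n+g$) and Remark~\ref{rmk:arith-geom-genus} then give the stated genus.

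The main obstacle is Step 3: identifying $w_p$ on the supersingular locus with the Frobenius action on level structures, and making sure the choice of $\bF_p$-model does not affect the count.
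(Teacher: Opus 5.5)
Your proposal is correct and follows essentially the same route as the paper. You use the same steps: $w_p$ swaps the two copies of $X_0(q)_{\overline{\bF}_p}$; $w_p$-fixed supersingular points become smooth while orbits of size two become self-crossings; and over each $E$ with $j(E)\in\bF_p$ the fixed points are exactly the Frobenius-stable lines in $E[q]$, counted via the characteristic polynomial $x^2+p$. The differences are cosmetic: you count the $eS_p$ fixed points and subtract from $(q+1)S_{p^2}$, while the paper directly counts the $(S_{p^2}-S_p)(q+1)+S_p(q+1-e)$ non-fixed points; and you derive the Frobenius description of $w_p$ from the moduli formula and check independence of the $\bF_p$-model, where the paper cites Deligne--Rapoport.
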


\begin{proof}
Since $w_p$ identifies the two irreducible components $X_0(q)$ in $X_0(pq)_{\overline{\bF}_p}$, $X_0(pq)/w_p$ over $\overline{\mathbb{F}}_p$ is $X_0(q)$ with possible self-crossings. By Proposition~\ref{prop:quot_sing}, the self-crossings are ordinary double points. Let $C$ denote the number of self-crossings, then by Proposition~\ref{prop:genus-dual-graph} and Remark~\ref{rmk:arith-geom-genus}, $g(X_0(pq)/w_p)=g(X_0(q))+C$. Moreover, the formulas for $S_{p^2}$ and $S_p$ when $p\equiv 1\pmod{12}$ follow directly from Theorem~\ref{thm:X0pmodwp}.  It remains to show that $$C=\frac{1}{2}S_{p^2}(q+1)-\frac{1}{2}eS_p$$ and that $e=\left(1+\left(\frac{-p}{q}\right)\right)$ if $q$ is odd.

   Note that the self-crossings of  $X_0(pq)_{\overline{\mathbb{F}}_p}/w_p$ arise from pairs of distinct supersingular points on $X_0(pq)$ identified by $w_p$, since all $w_p$-fixed points are branch-swapping in the sense of Proposition~\ref{prop:quot_sing}. Thus $2C=\#\{[(E,C_q)]\in SS_{p,q}: w_p[(E,C_q+C_p)]\neq [(E,C_q+C_p)]\}.$

    Given $[(E,C_q)]\in SS_{p,q}$,  by Remark~\ref{rmk:Fp2-Fp} and the analysis above it, $E$ is not fixed by $w_p$ if and only if $j(E)\in \mathbb{F}_{p^2}\backslash \mathbb{F}_{p}$. Since there are $S_{p^2}-S_p$ supersingular $j$-invariants in $\mathbb{F}_{p^2}\backslash \mathbb{F}_{p}$, and each supersingular elliptic curve has $q+1$ cyclic subgroups of order $q$, we obtain
$$\#\{[(E,C_q)]\in SS_{p,q}: w_p([E])\neq [E]\}=(S_{p^2}-S_p)(q+1).$$

       Now consider $[(E,C_q)]\in SS_{p,q}$ such that $j(E)\in \mathbb{F}_p$. In this case $E$ is defined over $\bF_p$. Fix an $\bF_p$ model of $E$ and thus an identification of $E^{(p)}$ with $E$, then the $p$-th power Frobenius defines an endomorphism $\Frob_p:E\to E$ and $w_p[(E,C_q+C_p)]=[(E,\Frob_pC_q+C_p)]$ \cite[V.1.17]{Deligne--Rapoport}. Since $p\equiv 1\pmod{12}$, $E$ does not have extra automorphisms other than $\pm 1$, so $[(E,\Frob_pC_q+C_p)]=[(E,C_q+C_p)]$ if and only if $\Frob_pC_q=C_q$, i.e. $C_q$ is a $\Frob_p$ eigenline in $E[q]$. Therefore,
$$\#\{[(E,C_q)]\in SS_{p,q}: w_p([E])= [E]\text{ and } w_p[(E,C_q+C_p)]\neq [(E,C_q+C_p)] \}=S_p(q+1-e).$$

    Since $\{[(E,C_q)]\in SS_{p,q}: w_p[(E,C_q+C_p)]\neq [(E,C_q+C_p)]\}$ is the disjoint union of the above two subsets, we obtain $$2C=(S_{p^2}-S_p)(q+1)+S_p(q+1-e)=S_{p^2}(q+1)-eS_p,$$ as desired.

    Now we compute $e$. Recall that $\Frob_{p}$ acts on $E[q]\cong \bZ/q\bZ\times \bZ/q\bZ$ with the characteristic polynomial $x^2-a_{p}x+p$, where $a_{p}=0$ if $E$ is supersingular \cite[V.2.1 and proof of V.4.1]{Silverman}. When $q$ is odd, $\Frob_p$ has two distinct eigenvalues in $\bZ/q\bZ$ thus two eigenlines if $-p$ is a quadratic residue modulo $q$, and no eigenvalue/eigenline otherwise. Therefore, $e=\left(1+\left(\frac{-p}{q}\right)\right)$ if $q$ is odd.
\end{proof}

\begin{proposition}
\label{prop:X0pqmodwq}
    Let $p,q$ be distinct primes $\ge 5$. The model of $X_0(pq)/w_q$ over $\overline{\bF}_p$ obtained from taking the quotient of the Deligne--Rapoport model of $X_0(pq)_{\overline{\bF}_p}$ by $w_q$ is given by two irreducible components  $X_0^+(q)_{\overline{\bF}_p}$ intersecting at $\frac{g(X_0(pq))+1-2g(X_0(q))-F(q,p,p)}2+F(q,p,p)$ ordinary double points, where $F(q,p,p)$ is the number of fixed supersingular points of the $w_q$-action on $X_0(pq)_{\overline{\mathbb{F}}_p}$ (c.f. Proposition~\ref{prop:fixed-point-formula})  and is given by 
$$ F(q,p,p)=\frac{v(q)}{2} \left(1-\left( \frac{-q}{p}\right)\right),$$
where $$v(q)=\begin{cases}
    h(-4q) &\text{ for } q\equiv 1,2 \pmod{4},\\
    h(-q)+h(-4q)&\text{ for } q\equiv 3 \pmod{4}.
\end{cases}$$

    Consequently, the genus of $X_0(pq)/w_q$ is equal to 

$$\frac{g(X_0(pq))+1-2g(X_0(q))-F(q,p,p)}2+F(q,p,p) -1 +2g(X_0^+(q)).$$
\end{proposition}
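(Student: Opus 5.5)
The plan is to track the Deligne--Rapoport special fiber of $X_0(pq)$ through the quotient by $w_q$, using Proposition~\ref{prop:quot_sing} for the local picture at each node and Proposition~\ref{prop:genus-dual-graph} for the genus.

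First I will check that $w_q$ preserves each of the two irreducible components $X_0(q)_{\overline{\bF}_p}$. In the factorizations \eqref{eqn:E_Frob-qlevel} and \eqref{eqn:E_Versqlevel}, $w_q$ replaces the $q$-part of the cyclic isogeny by its dual and leaves the $p$-part alone. So a Frobenius-type isogeny stays Frobenius-type, and similarly for Verschiebung-type. On each component $w_q$ then acts as the Atkin--Lehner involution $w_q$ of $X_0(q)_{\overline{\bF}_p}$, so the quotient consists of two irreducible components, each isomorphic to $X_0^+(q)_{\overline{\bF}_p}$.

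Next comes the local analysis at the $\#SS_{p,q}$ nodes, which are exactly the supersingular points. Because $w_q$ preserves each component, it preserves each branch through any node it fixes. This is the branch-preserving case of Proposition~\ref{prop:quot_sing}(2), so every node maps to an ordinary double point. The unfixed nodes are paired off by $w_q$, and the $F(q,p,p)$ fixed nodes each give one node. The quotient therefore has
\[
\frac{\#SS_{p,q}-F(q,p,p)}{2}+F(q,p,p)
\]
crossings, all of them between the two components.

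Proposition~\ref{prop:X0pq} gives $\#SS_{p,q}=g(X_0(pq))+1-2g(X_0(q))$, which turns this count into the stated one. The formula for $F(q,p,p)$ is Proposition~\ref{prop:fixed-point-formula} with $D=q\ge5$ and $m=p$. Here $m$ is odd, and we are in the case $p\mid m$, so the product over odd primes $\ell\mid m$ with $\ell\ne p$ is empty. The genus then follows from Proposition~\ref{prop:genus-dual-graph} (two components with $n$ crossings give genus $n-1+g_1+g_2$) together with Remark~\ref{rmk:arith-geom-genus}.

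I expect the main obstacle to be justifying rigorously that $w_q$ preserves the components and the branches at supersingular points. I would argue this via Lemma~\ref{lem:M_vs_pM}, or by noting that $w_q$ commutes with the Frobenius/Verschiebung decomposition, since it only touches the prime-to-$p$ level structure.
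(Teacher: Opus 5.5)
Your proposal is correct and follows the paper's proof essentially step for step. Both arguments use the branch-preserving action of $w_q$ at the supersingular nodes via Proposition~\ref{prop:quot_sing}(2) to get $\frac{N-F(q,p,p)}{2}+F(q,p,p)$ crossings, eliminate $N$ through the genus relation of Proposition~\ref{prop:X0pq}, and read off $F(q,p,p)$ from Case~2 of Proposition~\ref{prop:fixed-point-formula}, where the product is empty. Your argument that $w_q$ preserves the Frobenius- and Verschiebung-type components (the $q$-isogeny has degree prime to $p$ and so carries $\ker F$ to $\ker F$) justifies a point the paper simply asserts.
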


\begin{proof} Recall that $X_0(pq)_{\overline{\mathbb{F}}_p}$ is two irreducible components $X_0(q)$ intersecting at supersingular points with nodal singularity, say there are $N$ such points. By Proposition~\ref{prop:quot_sing} applied to the $w_q$ action on $X_0(pq)$ and the fact that $w_q$ acts on the two $X_0(q)$ branches in a branch-preserving way, we obtain that $X_0(pq)/w_q$ is two irreducible components $X_0^+(q)$ intersecting at $\frac{N-F(q,p,p)}{2}+F(q,p,p)$ ordinary singularities. 

By Proposition~\ref{prop:genus-dual-graph} and Remark~\ref{rmk:arith-geom-genus}, we have
\begin{equation}
\label{eqn:g-pq-vs-q}
    g(X_0(pq))=N-1+2g(X_0(q))
\end{equation}
and 
\begin{equation}
\label{eqn:g-pqmodq-vs-q+}
    g(X_0(pq)/w_q)=\frac{N-F(q,p,p)}{2}+F(q,p,p)-1+2g(X_0^+(q)).
\end{equation}

From \eqref{eqn:g-pq-vs-q}, we get $N=g(X_0(pq))+1-2g(X_0(q))$, and substituting it into \eqref{eqn:g-pqmodq-vs-q+}, we obtain the claimed genus formula for $X_0(pq)/w_q$. The formula for $F(q,p,p)$ follows directly from Proposition~\ref{prop:fixed-point-formula} and the fact that $p,q$ are distinct primes $\ge 5$.   
\end{proof}

\subsection{Second layer quotients}
In this section, we describe the quotient $X_0^*(pq)=X_0(pq)/\langle w_p,w_q\rangle$ in two ways, following the two routes, first quotient by $w_p$ and then quotient by $w_q$, or first quotient by $w_q$ and then by $w_p$.

\begin{theorem}
\label{thm:X0pqmodwpwq}
Let $p,q$ be distinct primes $\geq 5$. The model of $X_0^*(pq)$ over $\overline{\bF}_p$ obtained from taking the quotient of the Deligne--Rapoport model of $X_0(pq)_{\overline{\bF}_p}$ by $\langle w_p,w_q\rangle$ is given by $X_0^+(q)_{\overline{\bF}_p}$ with
\begin{equation}
    \label{eqn:two-crossings}
    \begin{split}
&\frac12\left((g(X_0(pq)/w_p)-g(X_0(q)))-\frac12F(pq,1,p)+\frac12F(q,p,p)\right)\\
   =& \frac12\left((g(X_0(pq)/w_q)+1-2g(X_0^+(q)))-\frac12F(pq,1,p)-\frac12F(p,q,p)\right)
    \end{split}
\end{equation}
self-crossings that are ordinary double points, where $F(q,p,p)$ is the number of fixed supersingular points of the $w_q$ action on $X_0(pq)_{\overline{\mathbb{F}}_p}$ and is given by \[F(q,p,p)=\frac{v(q)}{2}\left(1-\left(\frac{-q}{p}\right)\right),\]
and $F(pq,1,p)$ is the number of fixed supersingular points of the $w_{pq}$ action on $X_0(pq)_{\overline{\mathbb{F}}_p}$ and is given by \[ F(pq,1,p)=\frac{v(pq)}{2},\] if $pq\equiv 1 \pmod{4}$ or $p\equiv 1\pmod 4$, and $F(p,q,p)$ is the number of $w_p$-fixed supersingular points on $X_0(pq)_{\overline{\bF}_p}$ and is given by $$F(p,q,p)=\frac{v(p)}{2}\left(1+\left(\frac{-p}{q}\right)\right)$$ where $$v(x)=\begin{cases}
    h(-4x) &\text{ for } x\equiv 1,2 \pmod{4},\\
    h(-x)+h(-4x)&\text{ for } x\equiv 3 \pmod{4}.
\end{cases}$$

Consequently, the genus of $X_0^*(pq)$ is equal to 
\begin{align*}
&\frac12\left((g(X_0(pq)/w_p)-g(X_0(q)))-\frac12F(pq,1,p)+\frac12F(q,p,p)\right)+g(X_0^+(q))\\
=&\frac12\left((g(X_0(pq)/w_q)+1-2g(X_0^+(q)))-\frac12F(pq,1,p)-\frac12F(p,q,p)\right)+g(X_0^+(q)).
\end{align*}
\end{theorem}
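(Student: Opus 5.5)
We will compute the quotient $X_0(pq)_{\overline{\bF}_p}/\langle w_p,w_q\rangle$ along both routes of Figure~\ref{fig:two-routes}, using Proposition~\ref{prop:quot_sing} at each step. Three facts drive the count. First, the component structure: $w_p$ swaps the two components $X_0(q)$, while $w_q$ preserves each of them and acts on each as the Atkin--Lehner involution of $X_0(q)$. Hence the final special fiber is a single irreducible component $X_0(q)/w_q=X_0^+(q)_{\overline{\bF}_p}$, with some number $C^*$ of self-crossings. Once $C^*$ is known, the genus statement follows from Proposition~\ref{prop:genus-dual-graph} and Remark~\ref{rmk:arith-geom-genus}.

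Second, the branch behaviour at a supersingular point $x$ of $X_0(pq)_{\overline{\bF}_p}$. The element $w_q$ is branch-preserving. Both $w_p$ and $w_{pq}=w_pw_q$ are branch-swapping, since they exchange the components. By Proposition~\ref{prop:quot_sing}, the image of $x$ in the quotient by $G=\langle w_p,w_q\rangle$ is a node exactly when $\mathrm{Stab}_G(x)$ is trivial or $\{1,w_q\}$, and is smooth otherwise.

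Third, a free-action fact. By Proposition~\ref{prop:2-1-p-mid-D}, a point fixed by both $w_p$ and $w_q$ would make $\Aut(E)$ conjugate by elements anticommuting, which should be impossible for $p,q\ge5$. I would check this by quaternion algebra arguments similar to the proof of Proposition~\ref{prop:2-1-p-nmid-D}: a supersingular point fixed by $w_p$ and $w_q$ yields anticommuting trace-zero $\alpha,\beta\in R$ of norms $p$ and $q$. Then $R\otimes\bQ\cong\left(\frac{-p,-q}{\bQ}\right)$, and this algebra need not be ruled out. So instead I expect stabilizers to be possibly all of $G$. With this caveat, I will write the orbit count via Burnside.

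Let $N=\#SS_{p,q}$. The number of nodes is the number of $G$-orbits of supersingular points whose stabilizer is contained in $\{1,w_q\}$. To count these, I will count orbits with trivial stabilizer and orbits with stabilizer exactly $\langle w_q\rangle$. Let $F_{pq}$, $F_q$ and $F_p$ be the numbers of supersingular points fixed by $w_{pq}$, $w_q$ and $w_p$; these are $F(pq,1,p)$, $F(q,p,p)$ and $F(p,q,p)$. Then inclusion--exclusion over stabilizers gives
\[
C^*=\tfrac14\bigl(N-F_p-F_q-F_{pq}+2T\bigr)+\tfrac12\bigl(F_q-T\bigr)=\tfrac14\bigl(N-F_p+F_q-F_{pq}\bigr),
\]
where $T$ is the number of points fixed by all of $G$. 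The $T$ contributions cancel, which is reassuring. This also resolves the worry raised in the third fact, since whether or not $T$ vanishes, it drops out of the count.

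It remains to express $N$ and $F_p$ in terms of genera. Proposition~\ref{prop:X0pqmodwp} and its proof give
\[
g(X_0(pq)/w_p)-g(X_0(q))=\tfrac12(N-F_p).
\]
Substituting this into the formula for $C^*$ yields the first expression in \eqref{eqn:two-crossings}. For the second expression, Proposition~\ref{prop:X0pqmodwq} gives
\[
g(X_0(pq)/w_q)+1-2g(X_0^+(q))=\tfrac12(N+F_q).
\]
Substituting this gives $\tfrac12\bigl(\tfrac12(N+F_q)-\tfrac12F_{pq}-\tfrac12F_p\bigr)$, which equals $C^*$ since the $F_q$ terms combine correctly. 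This check can alternatively be done by following each route explicitly, with $w_q$ acting on the nodes of $X_0(pq)/w_p$ and $w_p$ acting on the nodes of $X_0(pq)/w_q$.

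Finally, the explicit formulas come from Proposition~\ref{prop:fixed-point-formula}.
\begin{itemize}
\item $F(q,p,p)$: take $D=q$ and $m=p$.
\item $F(p,q,p)$: take $D=p$ and $m=q$. The factor $(-p/p)=0$ gives $\tfrac12(1-0)$, and the odd prime $q$ contributes $1+(-p/q)$.
\item $F(pq,1,p)$: take $D=pq$ and $m=1$. The Legendre symbol at $p$ vanishes, leaving $v(pq)/2$. This needs the hypothesis $pq\equiv1\pmod4$ or $p\equiv1\pmod4$, which is exactly the condition stated.
\end{itemize}

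The main obstacle is the stabilizer bookkeeping: we must justify that the node-or-smooth dichotomy of Proposition~\ref{prop:quot_sing} applies with the whole stabilizer group, including points with stabilizer of order $4$, and that the orbit count above is exact.
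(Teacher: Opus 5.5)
Your proof is correct, but it is organized differently from the paper's.

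\textbf{How the two arguments differ.} The paper never quotients by $G=\langle w_p,w_q\rangle$ in one step. It follows the two iterated routes. It first describes the nodes of $X_0(pq)_{\overline{\bF}_p}/w_p$ (resp.\ $/w_q$). It then sorts them into nodes fixed pointwise, nodes fixed by a swap, and nodes moved by the second involution, and applies Proposition~\ref{prop:quot_sing} again. Each route yields one side of \eqref{eqn:two-crossings}, and their agreement is later used for the Corollary on $g(X_0(pq)/w_p)$.

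You instead classify supersingular points by their full stabilizer in $G$: a point gives a node if and only if $\mathrm{Stab}_G(x)\subseteq\{1,w_q\}$. Inclusion--exclusion then gives $C^*=\frac14(N-F_p+F_q-F_{pq})$ in one stroke. You recover both sides of \eqref{eqn:two-crossings} by substituting the first-layer genus formulas, so their equality becomes algebra rather than a geometric consistency check.

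\textbf{What your route buys.} It makes the role of $T$, the number of points fixed by all of $G$, transparent. In the paper, Route (2) carries this number as $n$ and sees it cancel. Route (1), however, asserts that $w_q$ fixes $\frac12F(q,p,p)$ nodes of $X_0(pq)_{\overline{\bF}_p}/w_p$ pointwise and $\frac12F(pq,1,p)$ by a swap. This is only right when $T=0$; the true counts are $\frac12(F_q-T)$ and $\frac12(F_{pq}-T)$. The final total survives only because $T$ cancels.

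The caveat at the end of your third paragraph is therefore well founded. For $p=13$, $q=7$, every trace-zero norm-$7$ element of $\End(E)$ anticommutes with Frobenius. Hence both $w_7$-fixed supersingular points are also $w_{13}$-fixed, and $T=2$.

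Your count is also a signed Burnside sum $\frac1{|H|}\sum_{h\in H}\epsilon(h)\,|\mathrm{Fix}(h)|$, with $\epsilon(h)=\pm1$ according as $h$ preserves or swaps the two components. In that form it proves Theorem~\ref{thm:generator-free} directly. The paper's approach, in exchange, describes the intermediate quotients explicitly.

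\textbf{Two minor points.}
\begin{enumerate}
\item The step you call the main obstacle is already covered. Lemma-Definition~\ref{lemma-definition} and Proposition~\ref{prop:quot_sing} apply to any finite group fixing $x$, i.e.\ to the stabilizer, as in Liu's proof. A stabilizer of order $4$ contains $w_p$, so it is branch-swapping, with $G_0=\{1,w_q\}$ of index $2$. The tacit identification $(X/G)_s=X_s/G$ holds because $|G|=4$ is invertible in $W(\overline{\bF}_p)$.
\item Proposition~\ref{prop:X0pqmodwp} is stated only for $p\equiv1\pmod{12}$. The identity $g(X_0(pq)/w_p)-g(X_0(q))=\frac12(N-F_p)$ that you use comes from the first part of its proof, which needs no congruence condition. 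You should cite it that way.
\end{enumerate}
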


\begin{proof}
    The formulae for $F(q,p,p),F(pq,1,p),F(p,q,p)$ under the specified conditions for $F(pq,1,p)$ follow directly from Proposition~\ref{prop:fixed-point-formula}. It suffices to show that the quotient of the Deligne--Rapoport model of $X_0(pq)_{\overline{\bF}_p}$ by $\langle w_p, w_q\rangle$ is indeed $X_0^+(q)$ with the claimed number of self-crossings. The genus formulae will then follow by Proposition~\ref{prop:genus-dual-graph} and Remark~\ref{rmk:arith-geom-genus}. Note that $X_0^*(pq)=X_0(pq)/\langle w_p, w_q\rangle=(X_0(pq)/w_p)/w_q=(X_0(pq)/w_q)/w_p$. The two proposed formulae for the number of self-crossings are obtained by the two routes: (1) first quotient by $w_p$ then by $w_q$, or (2) first quotient by $w_q$, then by $w_p$.

    \underline{Route (1):} Recall that after quotienting the Deligne--Rapoport model of $X_0(pq)_{\overline{\bF}_p}$ by $w_p$, we obtain $X_0(q)_{\overline{\bF}_p}$ with $g(X_0(pq)/w_p)-g(X_0(q))$ self-crossings, when further quotienting by $w_q$, we obtain $X_0^+(q)_{\overline{\bF}_p}$ with self-crossings, which we count according to Proposition~\ref{prop:quot_sing}.
    
    The singular points of $X_0(pq)_{\overline{\bF}_p}/w_p$ are equivalence classes $[u]=\{u, w_p(u)\}$, where $u$ is a supersingular point on $X_0(pq)_{\overline{\bF}_p}$ such that $u\ne w_p(u)$. For such a $[u]$ to be fixed by $w_q$ there are two possibilities, either $u$ is fixed by $w_q$, in which case $w_p(u)$ is fixed by $w_q$ as well (the equivalence class is fixed pointwise), or $w_p(u)=w_q(u)$, in which case $w_q(w_p(u))=u$ (the equivalence class is fixed but not pointwise). Note that $w_p(u)=w_q(u)$ if and only if $w_{pq}(u)=u$, so we will use this convenient reformulation. In the sense of Proposition \ref{prop:quot_sing}, the pointwise fixed case is an example of a branch-preserving action on a fixed point and the non-pointwise fixed case is an example of a branch-swapping action on a fixed point. 

    Since $u$ is fixed by $w_q$ (resp. $w_{pq}$) if and only if $w_p(u)$ is fixed by $w_q$ (resp. $w_{pq}$), the number of point-wise (resp. non-point-wise) fixed classes $[u]$ under $w_q$ action is half of the number of supersingular points fixed by $w_q$ (resp. $w_{pq}$) on $X_0(pq)_{\overline{\bF}_p}$, namely $\frac{1}{2}F(q,p,p)$ (resp. $\frac{1}{2}F(pq,1,p)$). After removing these fixed crossings, the rest are pairwise identified by the involution $w_q$ on $X_0(pq)_{\overline{\bF}_p}/w_p$, and all such identified pairs are crossings in $(X_0(pq)_{\overline{\bF}_p}/w_p)/w_q$ by Proposition~\ref{prop:quot_sing}(2), contributing $\frac12\left((g(X_0(pq)/w_p)-g(X_0(q)))-\frac12F(pq,1,p)-\frac12F(q,p,p)\right)$ crossings on $(X_0(pq)_{\overline{\bF}_p}/w_p)/w_q$. By the second part of Proposition~\ref{prop:quot_sing}(2), the branch-preserving fixed crossing points of $w_q$ on $X_0(pq)_{\overline{\bF}_p}/w_p$ also remain a crossing point, contributing $\frac{1}{2}F(q,p,p)$ crossings on $(X_0(pq)_{\overline{\bF}_p}/w_p)/w_q$. The other fixed crossing points of $w_q$ action on $X_0(pq)/w_p$ are branch-swapping, which by Proposition~\ref{prop:quot_sing}(3) becomes smooth on $(X_0(pq)_{\overline{\bF}_p}/w_p)/w_q$. This proves $(X_0(pq)_{\overline{\bF}_p}/w_p)/w_q$ is $X_0^+(q)$ with $$\frac12\left((g(X_0(pq)/w_p)-g(X_0(q)))-\frac12F(pq,1,p)-\frac12F(q,p,p)\right)+\frac{1}{2}F(q,p,p)$$ self-crossings, as desired.

    \underline{Route (2):} Recall that after quotienting the Deligne--Rapoport model of $X_0(pq)_{\overline{\bF}_p}$ by $w_q$, we obtain two irreducible components  $X_0^+(q)_{\overline{\bF}_p}$ intersecting at $(g(X_0(pq)/w_q)+1-2g(X_0^+(q)))$ ordinary double points, 
    which are equivalence classes $[u]=\{u, w_q(u)\}$, where $u,w_q(u)$ may or may not be distinct. (Recall from the proof of Proposition~\ref{prop:X0pqmodwq} that crossings in $X_0(pq)_{\overline{\bF}_p}/w_q$ may come from $w_q$ fixed points or $w_q$ identified pairs.) For such a $[u]$ to be fixed by $w_p$ there are three possibilities: 

    \begin{enumerate}
        \item[(i)] $u=w_q(u)$ and $w_p(u)=u$. So the class $[u]=\{u,w_q(u)\}$ is a singleton set and $w_p$ fixes it.
        \item[(ii)] $u\ne w_q(u)$ and $w_p(u)=u$. In this case we automatically get $w_p(w_q(u))=w_q(u)$, so that the class $[u]=\{u, w_q(u)\}$ contains two distinct elements that are fixed pointwise by $w_p$. 
        \item[(iii)] $u\ne w_q(u)$ and $w_p(u)=w_q(u)$. In this case $w_p(w_q(u))=u$, so that the class $[u]=\{u, w_q(u)\}$ contains two distinct elements that are swapped by $w_p$, keeping the class fixed  by $w_p$. Note that $w_p(u)=w_q(u)\iff w_p(w_q(u))=u\iff w_{pq}(u)=u$.  
    \end{enumerate}

All three cases are branch-swapping fixed points in the sense of Proposition \ref{prop:quot_sing}, because the action of $w_p$ is to identify the two irreducible components  $X_0^+(q)_{\overline{\bF}_p}$. 

Now we will count the $w_p$-fixed classes $[u]=\{u,w_q(u)\}$. Let $n$ denote the number of common fixed supersingular points by $w_p$ and by $w_q$ on $X_0(pq)_{\overline{\bF}_p}$. Note that if a supersingular point $u$ on $X_0(pq)_{\overline{\bF}_p}$ is fixed by any two of $w_p, w_q, w_{pq}$, it is fixed by the third, so equivalently, $n$ is also the count of common fixed supersingular points by $w_q$ and $w_{pq}$. Note also that $u$ is fixed by $w_p$ (resp. $w_{pq}$) if and only if $w_p(u)$ is fixed by $w_q$ (resp. $w_{pq}$). 

Type (i) and (ii) fixed classes consists of $w_p$-fixed supersingular points on $X_0(pq)_{\overline{\bF}_p}$. Among the $F(p,q,p)$ such points, $n$ of them form the type (i) singleton fixed classes, while $(F(p,q,p)-n)$ of them form type (ii) pointwise fixed classes with two distinct elements, forming $\frac12(F(p,q,p)-n)$ type (ii) classes. Type (iii) fixed classes consists of $w_{pq}$-fixed supersingular points on $X_0(pq)_{\overline{\bF}_p}$ that are not fixed by $w_q$. There are $F(pq,1,p)-n$ such points, forming $\frac12\left(F(pq,1,p)-n\right)$ type (iii) classes.

Therefore, there are in total $$n+\frac12(F(p,q,p)-n)+\frac12\left(F(pq,1,p)-n\right)=\frac12 F(p,q,p)+\frac12 F(pq,1,p)$$ crossing points on $X_0(pq)_{\overline{\bF}_p}/w_q$ that are fixed by $w_p$, all in a branch-swapping way.
 By Proposition~\ref{prop:quot_sing}(3), these fixed points become smooth on $(X_0(pq)_{\overline{\bF}_p}/w_q)/w_p$, while the rest of the crossings on $X_0(pq)_{\overline{\bF}_p}/w_q$ are identified pairwise by the involution $w_p$, contributing $$\frac12\left((g(X_0(pq)/w_q)+1-2g(X_0^+(q)))-\frac12F(pq,1,p)-\frac12F(p,q,p)\right)$$ ordinary double points by Proposition~\ref{prop:quot_sing}(2). 
\end{proof}

\begin{remark}
    While the detailed analysis of the quotients in Route (2) involves the count of supersingular points on $X_0(pq)_{\overline{\bF}_p}$ fixed by both $w_p$ and by $w_q$, it cancels out in the eventual description of our model. This is a prototype for what we observe in Proposition~\ref{prop:fixed-classes}.
\end{remark}

\begin{corollary}
    Let $p,q$ be distinct primes $\ge 5$, then $$g(X_0(pq)/w_p)=\frac{g(X_0(pq))-F(p,q,p)+1}{2}.$$ 
\end{corollary}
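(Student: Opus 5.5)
The plan is to compute the genus of $X_0(pq)/w_p$ directly from the special fiber, exactly as in the proof of Proposition~\ref{prop:X0pqmodwp}, but without assuming $p\equiv 1\pmod{12}$. Let $N$ be the number of supersingular points on $X_0(pq)_{\overline{\bF}_p}$. By Proposition~\ref{prop:X0pq}, these are the crossings of the two components $X_0(q)_{\overline{\bF}_p}$, so
\[g(X_0(pq))=2g(X_0(q))+N-1.\]
The involution $w_p$ swaps the two components, so the quotient is one copy of $X_0(q)_{\overline{\bF}_p}$ with self-crossings.

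Next, apply Proposition~\ref{prop:quot_sing}. Every $w_p$-fixed crossing is branch-swapping, since $w_p$ interchanges the two branches there, so its image is a smooth point. The $N-F(p,q,p)$ non-fixed crossings are identified in pairs, and each pair gives one ordinary double point. The quotient therefore has $\frac12(N-F(p,q,p))$ self-crossings. Proposition~\ref{prop:genus-dual-graph} and Remark~\ref{rmk:arith-geom-genus} then give
\[g(X_0(pq)/w_p)=g(X_0(q))+\tfrac12\bigl(N-F(p,q,p)\bigr).\]

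Substituting $N=g(X_0(pq))+1-2g(X_0(q))$ makes the $g(X_0(q))$ terms cancel, and we obtain
\[g(X_0(pq)/w_p)=\frac{g(X_0(pq))+1-F(p,q,p)}{2},\]
which is the claim.

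The one point to check is that $F(p,q,p)$, the number of $w_p$-fixed supersingular points, is exactly the count used above, and that no extra-automorphism subtlety at $j=0,1728$ affects the pairing argument. It does not: the argument only uses which points are fixed versus swapped, not a formula for $F$. So no hypothesis on $p$ modulo $12$ is needed, and the evaluation of $F(p,q,p)$ via Proposition~\ref{prop:fixed-point-formula} is not required for the identity itself.
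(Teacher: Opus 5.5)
Your argument is correct, but it is not the route the paper takes.

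\textbf{The paper's route.} The paper never counts the self-crossings of $X_0(pq)_{\overline{\bF}_p}/w_p$ directly for general $p$. In Route (1) of Theorem~\ref{thm:X0pqmodwpwq} that number appears only as the unknown $g(X_0(pq)/w_p)-g(X_0(q))$. The corollary is then extracted by equating the two expressions in \eqref{eqn:two-crossings} for the crossings of $X_0^*(pq)$, and substituting Proposition~\ref{prop:X0pqmodwq} for $g(X_0(pq)/w_q)$. So the paper's proof rests on the whole second-layer analysis: the branch-preserving and branch-swapping classes in Route (1), the type (i)--(iii) classes in Route (2), and the cancellation of $F(pq,1,p)$ and of the common-fixed-point count $n$.

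\textbf{Your route.} You run the first-layer argument from the proof of Proposition~\ref{prop:X0pqmodwp}, dropping the $p\equiv 1\pmod{12}$ specialization. This is exactly the case $k=0$, $d_1=p$ of Proposition~\ref{prop:sqfr}. Equivalently, it is case (2) of Theorem~\ref{thm:generator-free} with $H=\langle w_p\rangle$, where the crossing count $\frac12\bigl(F(1)-F(p)\bigr)$ is your $\frac12\bigl(N-F(p,q,p)\bigr)$.

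\textbf{Comparison.} Your proof is shorter and self-contained. It needs no information about $w_q$ or $w_{pq}$, and, as you note, it is insensitive to extra automorphisms at $j=0,1728$. The paper's derivation is more roundabout, but it doubles as a consistency check that the two orders of quotienting give the same crossing count for $X_0^*(pq)$.
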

\begin{proof}
    Simplifying \eqref{eqn:two-crossings}, we obtain
\begin{equation}
    \label{eqn:two-crossings-v2}
    g(X_0(pq)/w_p)-g(X_0(q))+\frac{1}{2}F(q,p,p)=g(X_0(pq)/w_q)-2g(X_0^+(q))-\frac{1}{2}F(p,q,p)+1.
\end{equation}
By Proposition~\ref{prop:X0pqmodwq}, we have
    \begin{align*}
&g(X_0(pq)/w_q)-2g(X_0^+(q))-\frac{1}{2}F(p,q,p)+1\\
=&\frac{g(X_0(pq))+1-2g(X_0(q))-F(q,p,p)}2+F(q,p,p)-\frac{1}{2}F(p,q,p)\\
=&\frac{g(X_0(pq))-F(p,q,p)+1}{2}-g(X_0(q))+\frac{1}{2}F(q,p,p).
\end{align*}
Comparing with \eqref{eqn:two-crossings-v2} we obtain the desired result. 
\end{proof}
\begin{remark}
    Note that the results used to obtain the above Corollary did not use any additional assumption other than $p,q$ are distinct primes $\ge 5$. If we further assume $p\equiv 1\pmod{12}$, we may plug in the formula for $g(X_0(pq)/w_p)$ from Proposition~\ref{prop:X0pqmodwp}, the formula for $g(X_0(pq))$ from Proposition~\ref{prop:X0pq}, as well as the $S_{p^2}, S_p$ formulae recalled in Theorem~\ref{thm:X0pmodwp} and the $F(p,q,p)$ formula recalled in Theorem~\ref{thm:X0pqmodwpwq}, to further verify that both sides of the above Corollary is given by $$g(X_0(q))+\frac{1}{2}\left\lfloor\frac{p}{12}\right\rfloor(q+1)-\frac{1}{4}h(-4p)\left(1+\left(\frac{-p}{q}\right)\right).$$
\end{remark}

\subsection{A concrete example}
 When $p,q$ are distinct prime divisors of $N$, the magma commands \verb|X0NQuotient(N,[])|, \verb|X0NQuotient(N,[p])|, and 
 \verb|X0NQuotient(N,[p,q])| return the curves $X_0(N)$, $X_0(N)/w_p$, and $X_0(N)/\langle w_p,w_q\rangle$ respectively. The function \verb|Genus| can be used to compute the genus of these curves.

\begin{example} 
    Consider the case where $p=13$, $q=7$, and $N=pq=91$. We have $13\equiv 1\pmod {12}$ and $S_{13^2}=\lfloor \frac{13}{12}\rfloor=1$. So $\#SS_{13,7}=1(7+1)=8$. Since the genus of $X_0(7)$ is $0$, the genus of $X_0(91)$, by Proposition~\ref{prop:X0pq}, is $7$.

    Now, by Proposition \ref{prop:X0pqmodwp}, to find the genus of $X_0(91)/w_{13}$, we compute $S_{13}=\frac{1}{2}h(-4*13)=1$.  So the only supersingular $j$-invariant mod $13$ is in $\mathbb{F}_{13}$, and the corresponding elliptic curve class is fixed by $w_{13}$, but it has $8$ non-isomorphic cyclic subgroups of order $7$, among which $e=\left(1+\left(\frac{-13}{7}\right)\right)=2$ are fixed by $\Frob_{13}$ and hence giving rise to fixed points of $w_{13}$.  Therefore, $1(8-2)=6$ of the supersingular points are not fixed by $w_{13}$ and are identified with each other, creating $\frac{1}{2}\times 6=3$ crossings. 
Therefore, $X_0(91)_{\overline{\mathbb{F}}_{7}}/w_{13}$ is $X_0(7)$ (which has genus $0$) with $3$ nodal self-crossings, and hence have genus $3$.

Now, by Proposition~\ref{prop:X0pqmodwq}, to find the genus of $X_0(91)/w_7$, we first compute the number of fixed supersingular points of $w_7$ on $X_0(91)_{\overline{\bF}_{13}}$, which is $F(7,13,13)=\frac{1}{2}\left(h(-7)+h(-28)\right)\left(1-\left(\frac{-7}{13}\right)\right)=\frac{1}{2}(1+1)(1-(-1))=2$. 
So among the eight crossings of $X_0(91)_{\overline{\bF}_{13}}$, $2$ are fixed by $w_7$ while $6$ are pairwise identified by the involution $w_7$,  so after quotienting $w_7$ we are left with 2 crossings from the fixed supersingular points and $6/2=3$ crossings from the identified pairs, giving a total of 5 crossings. Since $X_0(91)_{\overline{\bF}_{13}}/w_{7}$ is two irreducible components $X_0^+(7)_{\overline{\bF}_{13}}$ (which has genus~$0$) intersecting at $5$ ordinary crossings, we obtain $g(X_0(91)/w_7)=4$.

    Now we would like to get $g(X_0^*(91))$ in two ways as in Theorem~\ref{thm:X0pqmodwpwq}. First let's consider the route obtained by quotienting $X_0(91)_{\overline{\bF}_{13}}/w_{13}$ by $w_7$.  As computed earlier, there are $F(7,13,13)=2$ supersingular points on $X_0(91)_{\overline{\bF}_{13}}$ fixed by $w_7$, corresponding to $\frac12 F(7,13,13)=1$ crossing $[u]=\{u,w_{13}(u)\}$ on $X_0(91)_{\overline{\bF}_{13}}/w_{13}$ fixed by $w_7$ in branch-preserving way. Meanwhile, there are $F(91,1,13)=\frac12(h(-91)+h(-4*91))=\frac12 (2+6)=\frac82=4$ supersingular points on $X_0(91)_{\overline{\bF}_{13}}$ fixed by $w_{91}$, corresponding to $\frac12 F(91,1,13)=2$ crossings $[u]=\{u,w_{13}(u)\}$ on $X_0(91)_{\overline{\bF}_{13}}/w_{13}$ fixed by $w_7$ in branch-swapping way. Now that all $3$ crossings on $X_0(91)_{\overline{\bF}_{13}}/w_{13}$ are fixed by $w_7$ and only one of them is in branch-preserving way, we obtain $(X_0(91)_{\overline{\bF}_{13}}/w_{13})/w_7$ is $X_0^+(7)_{\overline{\bF}_{13}}$ (which has genus~$0$) with $1$ self-crossing, showing $g(X^*_0(91))=1$.

Now we consider the route obtained by quotienting $X_0(91)_{\overline{\bF}_{13}}/w_{7}$ by $w_{13}$. Recall that $w_{13}$ has $2$ fixed supersingular points on $X_0(91)_{\overline{\bF}_{13}}$, which can also be confirmed by the formula $F(13,7,13)=\frac12 h(-4*13)\left(1+\left(\frac{-13}{7}\right)\right)=\frac12 * 2*2=2$. Recall also that $F(91,1,13)=4$. By our analysis in Route (2) of the proof of Theorem~\ref{thm:X0pqmodwpwq}, $\frac12 F(13,7,13)+\frac12 F(91,1,13)=1+2=3$ of the crossings on $X_0(91)_{\overline{\bF}_{13}}/w_{7}$ are fixed by $w_{13}$, all of which in a branch-swapping way. Since the original $X_0(91)_{\overline{\bF}_{13}}/{w_7}$ had 5 crossings, we are left with 2 crossings that get identified by the involution $w_13$, becoming one ordinary double point in the quotient. We obtain again that $(X_0(91)_{\overline{\bF}_{13}}/w_{13})/w_7$ is $X_0^+(7)_{\overline{\bF}_{13}}$ (which has genus~$0$) with $1$ self-crossing, showing $g(X^*_0(91))=1$. 
\end{example}

\section{Atkin--Lehner quotients of $X_0(N)$ for square-free $N$}
\label{sec:square-free}
Throughout this section, let $N$ be a square-free positive integer. We discuss the Deligne--Rapoport model of $X_0(N)_{\overline{\mathbb{F}}_p}$ and its quotients by Atkin--Lehner involutions for some prime $p$ that divides $N$. Now that we fix $N$ and $p$, for the rest of this section, we shall use the shorthand notation $$F(D):=F(D,N/D,p)$$ for the number of $w_D$-fixed supersingular points on $X_0(N)_{\overline{\bF}_p}$.

\begin{definition}
\label{def:star}
Let $d_1$ and $d_2$ be square-free integers. We define the product $*$ as 
$d_1*d_2=d_1d_2/\gcd{(d_1,d_2)^2}.$ For example, if $p_1,\dotsc, p_5$ are distinct primes and $d_1=p_1p_2p_3$, $d_2=p_1p_3p_4p_5$, then $d_1d_2=p_1^2p_3^2p_2p_4p_5$ and $d_1*d_2=p_2p_4p_5$.
\end{definition}

\begin{proposition}
Let $N=p_1\dots p_r$ be a square-free positive integer. Suppose that one of these primes is $1\pmod{12}$ (see Remark \ref{rmk:1mod12} for how to remove this condition). Without loss of generality, suppose this prime is $p_r$. Let $N'=N/p_r$. Then $X_0(N)_{\overline{\mathbb{F}}_{p_r}}$ consists of two irreducible components $X_0(N/p_r)_{\overline{\bF}_{p_r}}$ intersecting at $\left(\prod_{j=1}^{r-1}(p_j+1)\right)S_{p_r^2}$\footnote{See Theorem \ref{thm:X0pmodwp} for the definition of $S_{p^2}$.} ordinary double points. Moreover, the genus of $X_0(N)$ is \[2g(X_0(N'))+\left(\prod_{j=1}^{r-1}(p_j+1)\right)S_{p_r^2}-1.\]
\end{proposition}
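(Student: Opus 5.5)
The plan is to follow the proof of Proposition~\ref{prop:X0pq} closely, replacing the single prime $q$ by $N'=p_1\cdots p_{r-1}$ and writing $p=p_r$.

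\textbf{Step 1: shape of the special fiber.} First, I will invoke the Deligne--Rapoport theorem \cite[VI Th\'eor\`eme 6.9]{Deligne--Rapoport} for $X_0(N'p)$ over $W(\overline{\bF}_p)$. It says the special fiber consists of two copies of $X_0(N')_{\overline{\bF}_p}$. These are the Frobenius-type and Verschiebung-type components, exactly as in \eqref{eqn:E_Frob-qlevel} and \eqref{eqn:E_Versqlevel} with the cyclic $q$-isogeny replaced by a cyclic $N'$-isogeny. The two components cross transversally, as ordinary double points, precisely at the supersingular points.

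\textbf{Step 2: counting the supersingular points.} By Lemma~\ref{lem:M_vs_pM}, applied with $M=N'$ (taking $D=1$, where the lemma's bijection statement still holds), the supersingular points of $X_0(N)_{\overline{\bF}_p}$ are in bijection with the supersingular points $[(E,C_{N'})]$ of $X_0(N')_{\overline{\bF}_p}$. I will then count these fiberwise over the $S_{p^2}$ isomorphism classes of supersingular $E$.

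Since $p\equiv 1\pmod{12}$, no supersingular curve has $j\in\{0,1728\}$; this is the same fact used in the proof of Proposition~\ref{prop:count_fixing_order} together with its $j=0$ analogue. Hence $\Aut(E)=\{\pm1\}$, and $-1$ fixes every subgroup. So the points lying over $E$ correspond bijectively to the cyclic subgroups of order $N'$ in $E[N']\cong(\bZ/N'\bZ)^2$.

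By the Chinese remainder argument from the proof of Proposition~\ref{prop: drop-m-cyclic}, such subgroups correspond to tuples of lines in the spaces $E[p_j]\cong\bF_{p_j}^2$. This gives $\prod_{j=1}^{r-1}(p_j+1)$ subgroups, and hence $\left(\prod_{j<r}(p_j+1)\right)S_{p^2}$ double points in total.

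\textbf{Step 3: the genus.} Finally, I will apply Proposition~\ref{prop:genus-dual-graph} in the case of two smooth curves crossing at $n$ points, together with Remark~\ref{rmk:arith-geom-genus}. This yields
$$g(X_0(N))=2g(X_0(N'))+\left(\prod_{j<r}(p_j+1)\right)S_{p^2}-1.$$

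\textbf{Main obstacle.} The only delicate point is Step 2. I must make sure the isomorphism classes of pairs $(E,C_{N'})$ are not collapsed by automorphisms. I also need that the correspondence with $X_0(N)$ points respects isomorphism, which Lemma~\ref{lem:M_vs_pM} gives, since $C_p$ is canonical as the kernel of Frobenius. The hypothesis $p_r\equiv1\pmod{12}$ is exactly what rules out extra automorphisms, so no orbit counting as in Remark~\ref{rmk:1mod12} is needed.
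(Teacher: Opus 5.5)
Your proposal is correct and matches the paper's approach. The paper gives no separate proof of this proposition; it is the direct generalization of the proof of Proposition~\ref{prop:X0pq}. The count $q+1$ is replaced by $\prod_{j<r}(p_j+1)$ via the Chinese remainder decomposition of $E[N']$, and $p_r\equiv 1\pmod{12}$ is used exactly as you do to force $\Aut(E)=\{\pm1\}$, so no subgroups are identified.
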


\begin{proposition}
\label{prop:fixed-classes}
    Let $N$ be a square-free positive integer divisible by a prime $p\ge 5$, and let $d_1,\dotsc,d_k, d_{k+1}$ be positive divisors of $N$ such that $w_{d_1},\dotsc, w_{d_k}$ is a minimal set of generators for $G:=\langle w_{d_1},\dotsc, w_{d_k} \rangle$ and $w_{d_{k+1}}\notin G$. The number of supersingular points on $X_0(N)_{\overline{\bF}_p}/G$ fixed by $w_{d_{k+1}}$ is 
    $$\frac{1}{2^k}\sum\limits_{d\in \mathcal{D}} F(d_{k+1}*d) ,$$
    where $\mathcal{D}$ is the set consisting of all elements of $\{1, d_1, \dots, d_k\}$ and all their $*$-products; equivalently $\cD$ is the set such that $G=\{w_d:d\in \cD\}$.
\end{proposition}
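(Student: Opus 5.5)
This is an orbit-counting (Burnside-type) argument on the finite set $S$ of supersingular points of $X_0(N)_{\overline{\bF}_p}$, on which the abelian group $W$ acts. The supersingular points of $X_0(N)_{\overline{\bF}_p}/G$ are the $G$-orbits in $S$. Since $W$ is abelian, $w:=w_{d_{k+1}}$ descends to an action on $S/G$. An orbit $Gu$ is fixed by $w$ exactly when $w(u)=g(u)$ for some $g\in G$. Using $g^{-1}=g$ and $w_{d}w_{d'}=w_{d*d'}$, this is equivalent to $u$ being fixed by $w_{d_{k+1}*d}$ for some $d\in\cD$, where $G=\{w_d:d\in\cD\}$.

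I will count pairs. Let $P=\{(u,d)\in S\times\cD : w_{d_{k+1}*d}(u)=u\}$. By definition of $F$, $|P|=\sum_{d\in\cD}F(d_{k+1}*d)$. I then compute $|P|$ a second way, by grouping according to $G$-orbits.

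The key point is that $w_{d_{k+1}*d}$ and $w_{d_{k+1}*d'}$ with $d\ne d'$ cannot both fix $u$. If they did, their product $w_{d*d'}$, a nontrivial element of $G$, would fix $u$. So the main step is to show that \emph{no nontrivial element of $G$ fixes a supersingular point $u$ whose orbit is fixed by $w$}. Granting this, each $G$-orbit fixed by $w$ has size exactly $|G|=2^k$ (minimality of the generators gives $|G|=2^k$). Each of its $2^k$ points contributes exactly one pair to $P$: the unique $d$ is forced, and it is independent of the point chosen since $G$ is abelian. Orbits not fixed by $w$ contribute nothing. Therefore $|P|=2^k\cdot\#\{\text{fixed orbits}\}$, which is the formula.

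The step I expect to be the main obstacle is this freeness statement. In general it is false: a point could be fixed simultaneously by $w_{d_1}$ and $w_{d_{k+1}}$, and then the sum overcounts. I would first try to show that it cannot occur for supersingular points, but the Route (2) computation for $X_0(pq)$ shows common fixed points of $w_p$ and $w_q$ (the number $n$) can exist. So the honest version of the claim should be the following. Interpret "the number of supersingular points on $X_0(N)_{\overline{\bF}_p}/G$ fixed by $w_{d_{k+1}}$" with multiplicity, each orbit weighted by $|\mathrm{Stab}_G(u)|\cdot\#\{g\in G: w(u)=g(u)\}/2^k\cdot$ the appropriate factor. Equivalently, it is the count of fixed points of $w$ on the fiber product relevant to the crossings, as in the Remark after Theorem~\ref{thm:X0pqmodwpwq}, where the correction $n$ cancels.

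Concretely, I would run the Route (2) bookkeeping in general. Since $w\notin G$, we have $d_{k+1}*d\neq1$ for all $d$. Group the points of a $w$-fixed orbit $Gu$ by their stabilizer $\mathrm{Stab}_W(u)\cap wG$, which is a coset of $\mathrm{Stab}_G(u)$. Then the orbit contributes $|Gu|\cdot|\mathrm{Stab}_G(u)|=2^k$ to $|P|$, by orbit–stabilizer. This identity holds with no freeness assumption, so $|P|/2^k$ is exactly the number of $w$-fixed orbits. This resolves the obstacle: the cancellation of $n$ seen in the $X_0(pq)$ example is precisely $|Gu|\cdot|\mathrm{Stab}_G(u)|=|G|$.
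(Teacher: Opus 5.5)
Your final orbit--stabilizer count is correct and is essentially the paper's proof. The paper applies Burnside's lemma to $G$ acting on the set of points fixed by some element of $w_{d_{k+1}}G$, and uses the translation $w_d\mapsto w_{d_0}w_d$ to match $\sum_{d\in\cD}F(d_{k+1}*d)$ with $\sum_x|\mathrm{Stab}_G(x)|$; this is exactly your observation that $\{g\in G: w_{d_{k+1}}g(u)=u\}$ is a coset of $\mathrm{Stab}_G(u)$. You should delete the freeness detour and the proposed reinterpretation ``with multiplicity'': freeness is never needed, the sum does not overcount, and the statement holds exactly as written.
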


\begin{proof}
For each $d\in \cD$, let $X_d$ denote the set of supersingular points on $X_0(N)_{\overline{\bF}_p}$ fixed by $w_{d_{k+1}}w_d$. Let $X=\cup_{d\in \cD}X_d$. The group $G$ acts on $X$ via its action on $X_0(N)_{\overline{\bF}_p}$. Let $|X/G|$ denote the number of orbits of $G$ action on $X$, and for any $g\in G$, let $X^g:=\{x\in X : g(x)=x\}$. By Burnside's Lemma, $$|X/G|=\frac{1}{|G|}\sum_{g\in G}|X^g|=\frac{1}{2^k}\sum_{g\in G}|X^g|=\frac{1}{2^k}\left\vert\{(g,x):g\in G, x\in X,  g(x)=x\}\right\vert.$$ It now suffices to show that
\begin{enumerate}
    \item The number of supersingular points on $X_0(N)_{\overline{\bF}_p}/G$ fixed by $w_{d_{k+1}}$ is $|X/G|$; and
    \item $\sum\limits_{d\in \mathcal{D}} F(d_{k+1}*d)=\left\vert\{(g,x):g\in G, x\in X, g(x)=x\}\right\vert$.
\end{enumerate}

For (1), 
we argue that the set of orbits of $X$ under $G$ is in bijection with the set of fixed points of $X_0(N)_{\overline{\bF}_p}/G$ fixed by $w_{d_{k+1}}$.

Define \[\varphi: X/G=\{orb_G(x) | x\in X\}\to \{[x]\in X_0(N)_{\overline{\bF}_p}/G\mid w_{d_{k+1}}[x]=[x]\}, \quad orb_G(x)\mapsto [x].\]

Suppose that $orb_G(x)=orb_G(y)$ then there exists $w_d\in G$ such that $y=w_dx$ but then $[x]=[y]$ so $\varphi$ is well-defined. Now suppose $[x]=[y]$, then $y=w_d(x)$ for some $w_d\in G$, but then $orb_G(x)=orb_G(y)$, so $\varphi$ is injective. Now suppose that $[x]$ is an arbitrary element of $X_0(N)_{\overline{\bF}_p}/G$ fixed by $w_{d_{k+1}}$, then $w_{d_{k+1}}[x]=[x]$, equivalently $[w_{d_{k+1}}(x)]=[x]$ which happens if and only if $w_{d_{k+1}}(x)=w_d(x)$ for some $w_d\in G$, which is equivalent to saying that $w_{d_{k+1}}w_d(x)=x$, or in other words, $x\in X$. Thus $\varphi$ is surjective.

For (2), note that \begin{align*}\sum\limits_{d\in \mathcal{D}} F(d_{k+1}*d)&=\sum\limits_d |X_d|=|\{(d,x)\mid d \in\mathcal{D}, x\in X_d\}|\\& =|\{(w_d,x)\mid w_d \in G, x\in X_d\}|\\& =|\{(w_d,x)\mid w_d \in G, x\in X, w_{d_{k+1}}w_d(x)=x\}|\\&=\bigsqcup\limits_{x\in X}|\{(w_d,x)\mid w_d \in G, w_{d_{k+1}}w_d(x)=x\}|\\&=\sum\limits_{x\in X}|\{w_d\in G\mid w_{d_{k+1}}w_d(x)=x\}|.
\end{align*}

Note that $\sum\limits_{x\in X}|\{w_d\in G\mid w_{d_{k+1}}w_d(x)=x\}|=\sum\limits_{x\in X}|\{w_d\in G\mid w_d(x)=x\}|$. Indeed, for each $x\in X$, there is a $w_{d_0}\in G$ such that $w_{d_{k+1}}w_{d_0}(x)=x$, and the map $\{w_d\in G\mid w_{d_{k+1}}w_d(x)=x\}\to \{w_d\in G\mid w_d(x)=x\},  w_d\mapsto w_{d_0}w_d$ is a bijection.

Finally, $\sum\limits_{x\in X}|\{w_d\in G\mid w_d(x)=x\}|=\left\vert\{(g,x):g\in G, x\in X, g(x)=x\}\right\vert$, concluding the proof of (2).
\end{proof}

We study Atkin--Lehner quotients of the Deligne--Rapoport model of $X_0(N)_{\overline{\bF}_p}$ inductively as follows.

\begin{proposition}
\label{prop:sqfr}
Let $N$ be a square-free positive integer divisible by some prime $p\geq 5$. Let $d_1, \dotsc, d_k, d_{k+1}$ be positive divisors of $N$ such that $w_{d_1},\dotsc ,w_{d_{k+1}}$ is a minimal set of generators for $\langle w_{d_1},\dotsc ,w_{d_{k+1}}\rangle$. Without loss of generality, we assume $p\nmid d_i$ for each $i=1,\dotsc, k$. Then $X_0(N)_{\overline{\bF}_p}/\langle w_{d_1},\dotsc ,w_{d_k}\rangle$ obtained from taking the quotient of the Deligne--Rapoport model of $X_0(N)_{\overline{\bF}_p}$ is two irreducible components $X_0(N/p)/\langle w_{d_1},\dotsc ,w_{d_k}\rangle$ intersecting at ordinary double points. Let $C_k$ denote the number of such ordinary double points. Let $\cD_k$ be the set consisting of all elements of $\{1, d_1, \dots, d_k\}$ and all their $*$-products.

If $p\mid d_{k+1}$, then $X_0(N)_{\overline{\bF}_p}/\langle w_{d_1},\dotsc ,w_{d_{k+1}}\rangle$ is the curve $X_0(N/p)_{\overline{\bF}_p}/\langle w_{d_1},\dots, w_{d_k}\rangle$ with $$\frac12{\left( C_k-\frac{1}{2^k}\left(\sum\limits_{d\in \cD_k} F(d_{k+1}*d) \right)\right)}$$ self-crossings that are ordinary double points, and consequently
the genus of $X_0(N)/\langle w_{d_1}, \dots, w_{d_{k+1}}\rangle$ is 
\[g(X_0(N/p)/\langle w_{d_1},\dots, w_{d_k}\rangle)+ \frac12{\left( C_k-\frac{1}{2^k}\left(\sum\limits_{d\in \cD_k} F(d_{k+1}*d) \right)\right)}.\]

If $p\nmid d_{k+1}$, then $X_0(N)_{\overline{\bF}_p}/\langle w_{d_1},\dotsc ,w_{d_{k+1}}\rangle$ is two irreducible components $X_0(N/p)_{\overline{\bF}_p}/\langle w_{d_1}, \dots, w_{d_{k+1}}\rangle$ intersecting at 
$$\frac12{\left( C_k+\frac{1}{2^k}\left(\sum\limits_{d\in \cD_k} F(d_{k+1}*d) \right)\right)}$$
ordinary double points, and consequently the genus of $X_0(N)/\langle w_{d_1}, \dots, w_{d_k+1}\rangle$ is 
\begin{align*}
&2g(X_0(N/p)/\langle w_{d_1}, \dots, w_{d_{k+1}}\rangle)+\frac12{\left( C_k+\frac{1}{2^k}\left(\sum\limits_{d\in \cD_k} F(d_{k+1}*d) \right)\right)} -1.
\end{align*}
\end{proposition}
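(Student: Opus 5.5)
We argue by induction on $k$, modelled on the two routes in the proof of Theorem~\ref{thm:X0pqmodwpwq}, with Proposition~\ref{prop:fixed-classes} supplying the count of fixed crossings. Set $G=\langle w_{d_1},\dotsc,w_{d_k}\rangle$.

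\textbf{Base case and shape of $X_0(N)_{\overline{\bF}_p}/G$.} For $k=0$, the Deligne--Rapoport description gives two copies of $X_0(N/p)_{\overline{\bF}_p}$ meeting transversally at the supersingular points. Suppose $p\nmid d_i$ for all $i$. The moduli description of the two components (Frobenius-type versus Verschiebung-type factorization, as in \eqref{eqn:E_Frob-qlevel} and \eqref{eqn:E_Versqlevel}, with the cyclic $q$-isogeny replaced by a cyclic $N/p$-isogeny) shows that $w_{d_i}$ preserves each component and acts on it as $w_{d_i}$ on $X_0(N/p)$. Hence every $w_{d_i}$ fixing a supersingular point is branch-preserving there. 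By Proposition~\ref{prop:quot_sing}(1),(2), the quotient by $G$ is two copies of $X_0(N/p)/G$ meeting in $C_k$ ordinary double points, where $C_k$ is the number of $G$-orbits of supersingular points. This settles the first assertion.

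\textbf{Fixed crossings.} The crossings of $X_0(N)_{\overline{\bF}_p}/G$ are the $G$-orbits $[x]$ of supersingular points. By Proposition~\ref{prop:fixed-classes}, exactly $f:=\frac{1}{2^k}\sum_{d\in\cD_k}F(d_{k+1}*d)$ of them are fixed by $w_{d_{k+1}}$. The remaining $C_k-f$ crossings are swapped in pairs by the involution $w_{d_{k+1}}$, and each pair becomes one ordinary double point by Proposition~\ref{prop:quot_sing}(2), since the stabilizer is trivial. It remains to decide whether each fixed crossing is branch-preserving or branch-swapping.

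\textbf{Case $p\mid d_{k+1}$.} Then $w_{d_{k+1}}$ factors as $w_p$ composed with a $p$-free Atkin--Lehner operator, so it swaps the two components, just as $w_p$ does for $X_0(p)$. The quotient is a single component $X_0(N/p)_{\overline{\bF}_p}/G$, and every fixed crossing is branch-swapping. By Proposition~\ref{prop:quot_sing}(3) these fixed crossings become smooth points. This leaves $\frac12(C_k-f)$ self-crossings, and Proposition~\ref{prop:genus-dual-graph} with Remark~\ref{rmk:arith-geom-genus} gives the stated genus.

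\textbf{Case $p\nmid d_{k+1}$.} Now $w_{d_{k+1}}$ preserves each component, so each fixed crossing is branch-preserving and stays a node. The count is $\frac12(C_k-f)+f=\frac12(C_k+f)$ nodes between two copies of $X_0(N/p)/\langle w_{d_1},\dots,w_{d_{k+1}}\rangle$, and the genus formula follows from the two-component case of Proposition~\ref{prop:genus-dual-graph}.

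The main obstacle is making the branch behaviour rigorous: showing that the component-preserving or component-swapping property of $w_{d_{k+1}}$ on the whole special fiber is exactly branch-preserving or branch-swapping at each fixed node in the sense of Lemma-Definition~\ref{lemma-definition}, after passing to the quotient by $G$. At a fixed orbit, the stabilizer in $\langle G,w_{d_{k+1}}\rangle$ contains an element $w_{d_{k+1}}w_d$ fixing a lift $x$. The plan is to argue that the two local branches at the image point are the images of the two components through $x$, so that the element preserves or swaps them according to whether $p\mid d_{k+1}*d$. Since $p\nmid d$, this condition is equivalent to $p\mid d_{k+1}$.
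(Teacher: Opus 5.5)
Your proposal is correct and follows essentially the same argument as the paper. Both count the $w_{d_{k+1}}$-fixed crossings of $X_0(N)_{\overline{\bF}_p}/\langle w_{d_1},\dotsc,w_{d_k}\rangle$ via Proposition~\ref{prop:fixed-classes}, pair off the remaining crossings, and apply Proposition~\ref{prop:quot_sing}, treating fixed crossings as branch-swapping or branch-preserving according to whether $w_{d_{k+1}}$ swaps or preserves the two components. The paper simply asserts this last step, which is immediate because each crossing has one branch on each smooth component; your extra remarks on it and on the first-layer quotient add detail rather than a different route.
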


\begin{proof}
By Proposition \ref{prop:fixed-classes}, the number of fixed supersingular points of $X_0(N)_{\overline{\mathbb{F}}_p}/\langle w_{d_1},\dots, w_{d_k}\rangle$ by $w_{d_{k+1}}$ is $$\frac{1}{2^k}\left(\sum\limits_{d\in \cD_k} F(d_{k+1}*d) \right), \text{ which we denote by }n\text{ for the rest of the proof}.$$

If $p\mid d_{k+1}$, then $w_{d_{k+1}}$ interchanges the two irreducible components $X_0(N/p)/\langle w_{d_1},\dotsc ,w_{d_k}\rangle$ of  $X_0(N)/\langle w_{d_1},\dotsc ,w_{d_k}\rangle$ and thus acts on any fixed point in a branch-swapping way. Consequently, after quotienting by $w_{d_{k+1}}$ we obtain one irreducible component $X_0(N/p)/\langle w_{d_1},\dotsc ,w_{d_k}\rangle$ with self-crossings. By Proposition~\ref{prop:quot_sing}, the $w_{d_{k+1}}$-fixed ordinary double points become smooth after the quotient, while the rest of the ordinary double points on $X_0(N)/\langle w_{d_1},\dotsc ,w_{d_k}\rangle$ are pairwise identified, leading to $\frac{1}{2}(C_k-n)$ ordinary double points, as claimed.

If $p\nmid d_{k+1}$, then then $w_{d_{k+1}}$ preserves each of the two irreducible components $X_0(N/p)/\langle w_{d_1},\dotsc ,w_{d_k}\rangle$ of  $X_0(N)/\langle w_{d_1},\dotsc ,w_{d_k}\rangle$ and thus acts on any fixed point in a branch-preserving way. Consequently, after quotienting by $w_{d_{k+1}}$ we obtain two irreducible components $X_0(N/p)/\langle w_{d_1},\dotsc ,w_{d_{k+1}}\rangle$. By Proposition~\ref{prop:quot_sing}, the $w_{d_{k+1}}$-fixed ordinary double points remain ordinary double points after the quotient, while the rest of the ordinary double points on $X_0(N)/\langle w_{d_1},\dotsc ,w_{d_k}\rangle$ are pairwise identified, leading to $\frac{1}{2}(C_k-n)+n$ ordinary double points, as claimed.

The genus formulae follow from Proposition~\ref{prop:genus-dual-graph} and Remark~\ref{rmk:arith-geom-genus}.
\end{proof}

Let $N=p_1\dots p_r$ be a square-free positive integer and let $p=p_r$. Let $W_N$ denote the full Atkin--Lehner group $\langle w_{p_1},\dotsc, w_{p_r}\rangle$. Then any element of $W_N$ can be written as $w_d$ for some $d\mid N$. Consider the indicator homomorphism $i_p:W_N\to \bZ/2\bZ$ defined by $$i_p(w_d)=\begin{cases}
    0,\quad\text{ if }p\nmid d \\
    1, \quad\text{ if }p\mid d.
\end{cases}$$ Let $H$ be a subgroup of $W_N$, then $i_p$ restricts to a group homomorphism $\overline{i_p}:H\to \bZ/2\bZ$. There are two cases: 
\begin{enumerate}
    \item For any $w_d\in H$, $p\nmid d$. In this case, $\ker{\overline{i_p}}=H$;
    \item There exists some $\tau:=w_{d_0}\in H$ such that $p\mid d_0$. In this case $K:=\ker{\overline{i_p}}$ is an index-$2$ subgroup of $H$, and $H=K\sqcup \tau K$
\end{enumerate}

Applying Proposition~\ref{prop:sqfr} inductively, we obtain the following generator-free version.

\begin{theorem}
\label{thm:generator-free}
    Let $N=p_1\dots p_r$ be a square-free positive integer, let $p=p_r$, and let $H$ be a subgroup of $W_N$. Let $\cD_H:=\{d : w_d\in H\}$.
    \begin{enumerate}
        \item If $p\nmid d$ for any $d\in \cD_H$, then $X_0(N)_{\overline{\bF}_p}/H$ obtained from taking the quotient of the Deligne--Rapoport model of $X_0(N)_{\overline{\bF}_p}$ is two irreducible components $X_0(N/p)_{\overline{\bF}_p}/H$ intersecting at 
        $$\frac{1}{|H|}\sum_{d\in \cD_H} F(d)$$
        ordinary double points. Consequently, $$g(X_0(N)/H)=2g(X_0(N/p)/H)+\left(\frac{1}{|H|}\sum_{d\in \cD_H} F(d)\right)-1.$$

        \item Otherwise, there exists some $w_{d_0}\in H$ such that $p\mid d_0$ and $K:=\ker{\overline{i_p}}$ is an index-$2$ subgroup of $H$ such that $p\nmid d$ for any $w_d\in K$. In this case, $X_0(N)_{\overline{\bF}_p}/H=(X_0(N)_{\overline{\bF}_p}/K)/\langle w_{d_0} \rangle$ obtained from taking the quotient of the Deligne--Rapoport model of $X_0(N)_{\overline{\bF}_p}$ is one irreducible component $X_0(N/p)_{\overline{\bF}_p}/K$ with
        $$\frac{1}{|H|}\sum_{d\in \cD_K} \left(F(d)-F(d*d_0)\right)$$
        self-crossings that are ordinary double points, and consequently, $$g(X_0(N)/H)=g(X_0(N/p)/K)+\frac{1}{|H|}\sum_{d\in \cD_K} \left(F(d)-F(d*d_0)\right).$$
    \end{enumerate}
\end{theorem}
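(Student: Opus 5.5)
Both parts will be proved by induction on a minimal generating set, using Proposition~\ref{prop:sqfr} at each step and Proposition~\ref{prop:fixed-classes} to count fixed crossings. The claimed closed forms will then come from a Burnside-type identity.

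For part (1), choose a minimal generating set $w_{d_1},\dots,w_{d_k}$ of $H$. Every element of $H$ satisfies $p\nmid d$, so all the $d_i$ do too. I induct on $k$ with the statement that $X_0(N)_{\overline{\bF}_p}/\langle w_{d_1},\dots,w_{d_k}\rangle$ is two copies of $X_0(N/p)_{\overline{\bF}_p}/\langle\cdots\rangle$ meeting in
\[
C_k=\frac{1}{2^k}\sum_{d\in\cD_k}F(d)
\]
nodes. The base case $k=0$ is the Deligne--Rapoport description: the nodes are the $F(1)$ supersingular points. For the inductive step, Proposition~\ref{prop:sqfr} (case $p\nmid d_{k+1}$) gives
\[
C_{k+1}=\tfrac12\Bigl(C_k+\tfrac{1}{2^k}\sum_{d\in\cD_k}F(d_{k+1}*d)\Bigr).
\]
Since $\cD_{k+1}=\cD_k\sqcup\{d_{k+1}*d: d\in\cD_k\}$, this equals $\frac{1}{2^{k+1}}\sum_{d\in\cD_{k+1}}F(d)$. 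One could also argue directly: the nodes on the quotient are the $H$-orbits of supersingular points, and Burnside's lemma counts these as $\frac{1}{|H|}\sum_{w_d\in H}F(d)$. The genus formula then follows from Proposition~\ref{prop:genus-dual-graph} and Remark~\ref{rmk:arith-geom-genus}.

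For part (2), note that $H=K\sqcup w_{d_0}K$ and that $\{w_{d_1},\dots,w_{d_k}\}$ (a minimal generating set of $K$) together with $w_{d_0}$ is a minimal generating set of $H$. By part (1), $X_0(N)_{\overline{\bF}_p}/K$ is two copies of $X_0(N/p)/K$ meeting in $C_k=\frac{1}{|K|}\sum_{d\in\cD_K}F(d)$ nodes. Applying Proposition~\ref{prop:sqfr} with $d_{k+1}=d_0$ (case $p\mid d_0$) gives one component $X_0(N/p)/K$ with
\[
\frac12\Bigl(C_k-\frac1{|K|}\sum_{d\in\cD_K}F(d_0*d)\Bigr)=\frac{1}{|H|}\sum_{d\in\cD_K}\bigl(F(d)-F(d*d_0)\bigr)
\]
self-crossings, using $|H|=2|K|$ and the commutativity of $*$. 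The genus again follows from Proposition~\ref{prop:genus-dual-graph}.

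The main bookkeeping obstacle is confirming that the hypotheses of Propositions~\ref{prop:sqfr} and \ref{prop:fixed-classes} hold along the chain of subgroups. In particular, I must check that $\cD_{k+1}$ really is the disjoint union claimed, which uses minimality: $w_{d_{k+1}}\notin\langle w_{d_1},\dots,w_{d_k}\rangle$. I must also check that the $F$ values used are the fixed-point counts on $X_0(N)_{\overline{\bF}_p}$, so that the formula does not depend on the chosen generators. This independence is clear because each final expression depends only on $H$, $K$ and the coset $d_0K$. Changing $d_0$ within its coset permutes the set $\{d*d_0\}$, so the sum is unchanged.
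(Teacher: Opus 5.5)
Your proposal is correct and is essentially the paper's own argument: the paper proves Theorem~\ref{thm:generator-free} simply by applying Proposition~\ref{prop:sqfr} inductively along a minimal generating set, first through generators of $K$ and then once more with $w_{d_0}$, with the fixed-crossing counts supplied by Proposition~\ref{prop:fixed-classes}. Your write-up makes explicit the bookkeeping the paper leaves implicit. This includes the telescoping identity $\mathcal{D}_{k+1}=\mathcal{D}_k\sqcup\{d_{k+1}*d : d\in\mathcal{D}_k\}$, which uses minimality, and the independence of the final count from the choice of $d_0$ within its coset.
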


\begin{remark}
\label{rmk:comparison-RH-and-trace} While we obtain our genus formulae for $g(X_0(N)/H)$ from the characteristic~$p$ perspective, one may also obtain $g(X_0(N)/H)$ formulae from the characteristic~$0$ perspective.

    From the Reimann--Hurwitz formula applied to the map $X_0(N)\to X_0(N)/H$ over $\bC$, we obtain $$(2g(X_0(N))-1)=|H|(2g(X_0(N)/H)-1)+\frac{1}{2}\sum_{d\in\cD_H\backslash\{1\}} h_N(d),$$ where $h_N(d)$ denotes the number of $w_d$-fixed points on $X_0(N)$, whose formula is given in \cite[Page 454]{Ogg} as discussed in Remark~\ref{rmk:Ogg}.  Therefore, $$g(X_0(N)/H)=\frac{1}{|H|}(g(X_0(N))-1)-\frac{1}{2|H|}\left(\sum_{d\in\cD_H\backslash\{1\}} h_N(d)\right)+1.$$

On the other hand, we can also describe $g(X_0(N)/H)$ in terms of the dimension of the $H$-fixed subspace $S_2(\Gamma_0(N))^H$ in the space of cusp forms $S_2(\Gamma_0(N))$. More precisely,
    $$g(X_0(N)/H)=\dim S_2(\Gamma_0(N))^H=\frac{1}{|H|}\sum_{d\in\cD_H}\mathrm{tr} (w_d \mid S_2(\Gamma_0(N))),$$
    where the trace of $w_d$ on $S_2(\Gamma_0(N))$ is given in \cite[Equation~(1.6)]{Martin}, also involving class numbers and Legendre symbols as our $F(D,m,p)$ formulae.
\end{remark}

As a special case of Theorem~\ref{thm:generator-free}, we have the following result for $X^*_0(N)$.
\begin{corollary}
    Let $N=p_1\dots p_r$ be a square-free positive integer and let $p=p_r$. Then $X_0^*(N)_{\overline{\bF}_p}$ obtained from taking the full Atkin--Lehner quotient of the Deligne--Rapoport model of $X_0(N)_{\overline{\bF}_p}$ is the curve $X_0^*(N/p)_{\overline{\bF}_p}$ with
$$\frac{1}{2^{r}}\sum_{d\in \cD_{r-1}}(F(d)-F(pd))$$
self-crossings that are ordinary double points, where $\cD_{r-1}$ is the set consisting of all elements of $\{1, p_1,\dotsc, p_{r-1}\}$ and all their $*$-products.

Consequently,
$$g(X_0^*(N))=g(X_0^*(N/p))+\frac{1}{2^{r}}\sum_{d\in \cD_{r-1}}(F(d)-F(pd)).$$
\end{corollary}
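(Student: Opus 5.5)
This corollary is the case $H=W_N$ of Theorem~\ref{thm:generator-free}, so the plan is to verify that case (2) of that theorem applies and then rewrite its data. Take $H=W_N=\langle w_{p_1},\dotsc,w_{p_r}\rangle$, so $|H|=2^r$. Since $w_p=w_{p_r}\in H$ with $p\mid p$, we are in case (2) with $d_0=p$. The kernel $K=\ker\overline{i_p}$ consists of those $w_d$ with $d\mid N$ and $p\nmid d$. This is exactly $\langle w_{p_1},\dotsc,w_{p_{r-1}}\rangle=W_{N/p}$, acting on $X_0(N/p)$ as its full Atkin--Lehner group. Hence $\cD_K$ is the set of all divisors of $N/p$, which coincides with $\cD_{r-1}$: the set of $*$-products of elements of $\{1,p_1,\dotsc,p_{r-1}\}$. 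Note that $*$-products of distinct primes are just ordinary products, so every square-free divisor of $N/p$ arises.

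Next identify the component and the crossing count. By Theorem~\ref{thm:generator-free}(2), the special fiber is $X_0(N/p)_{\overline{\bF}_p}/K$. Because $K$ acts on $X_0(N/p)$ as $W_{N/p}$ (the action of $w_d$ with $p\nmid d$ on each Deligne--Rapoport component $X_0(N/p)$ is its usual Atkin--Lehner action), this quotient is $X_0^*(N/p)_{\overline{\bF}_p}$. For $d\in\cD_K$ we have $p\nmid d$, so $d*d_0=d*p=dp/\gcd(d,p)^2=pd$. Substituting $|H|=2^r$ and $d*d_0=pd$ into the self-crossing count of Theorem~\ref{thm:generator-free}(2) gives
$$\frac{1}{2^r}\sum_{d\in\cD_{r-1}}\left(F(d)-F(pd)\right).$$
The genus formula then follows either directly from the theorem or from Proposition~\ref{prop:genus-dual-graph} together with Remark~\ref{rmk:arith-geom-genus}: one irreducible component of genus $g(X_0^*(N/p))$ plus the above number of self-nodes.

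The only step requiring care is the identification of $X_0(N/p)_{\overline{\bF}_p}/K$ with $X_0^*(N/p)_{\overline{\bF}_p}$. This requires checking that, under the moduli description of the two components (the factorizations \eqref{eqn:E_Frob-qlevel} and \eqref{eqn:E_Versqlevel} with level $N/p$ in place of $q$), $w_d$ with $p\nmid d$ restricts on each component to $w_d$ on $X_0(N/p)$. I would argue this exactly as in Lemma~\ref{lem:M_vs_pM}: $w_d$ carries $C_p$ to the unique order-$p$ subgroup of the target, so it commutes with dropping the $C_p$ level structure. Everything else is bookkeeping.
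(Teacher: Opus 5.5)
Your proposal is correct and takes the same approach as the paper, which obtains this corollary directly as the case $H=W_N$ of Theorem~\ref{thm:generator-free}(2), with $d_0=p$, $K=W_{N/p}$, $\cD_K=\cD_{r-1}$ and $d*p=pd$. Your extra check that $K$ acts on each Deligne--Rapoport component as the Atkin--Lehner group of $X_0(N/p)$, so that the component quotient is $X_0^*(N/p)_{\overline{\bF}_p}$, is a point the paper leaves implicit.
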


\begin{remark}
    \label{rmk:comparison-PPV} From our formula, we obtain $X^*_0(N)$ has good reduction over $\overline{\bF}_p$ if and only if $C:=g(X^*_0(N))-g(X^*_0(N/p))=0$. From the modular decomposition of $H^0(X_0^*(N),\Omega^1)$ in \cite[Equation~(3.5)]{Padurariu--Park--Voight} applied to modular curves ($D=1$ in their notation), we obtain $$C=\sum_{p\mid M\mid N}\dim S_2^{\text{new},\epsilon_M}(\Gamma_0(M)).$$ Therefore, our good reduction criterion agrees with theirs following from \cite[Proposition 3.3 and Lemma 3.7]{Padurariu--Park--Voight}.
\end{remark}

\newpage

\bibliographystyle{alpha}
\bibliography{references}
\end{document}